\DeclareMathAlphabet{\mathpzc}{OT1}{pzc}{m}{it} 
\definecolor{zielony}{rgb}{0.5, 0.9, 0.1}
\definecolor{czerwony}{rgb}{0.9, 0.2, 0.1}
\definecolor{niebieski}{rgb}{0.3, 0.1, 0.9}
\date{\today}
\numberwithin{equation}{section}
\newtheorem{conjecture}[equation]{Conjecture}
\newtheorem{theorem}[equation]{Theorem}
\newtheorem*{theorem*}{Theorem}
\newtheorem{proposition}[equation]{Proposition}
\newtheorem{lemma}[equation]{Lemma}
\newtheorem{corollary}[equation]{Corollary}
\theoremstyle{definition}
\newtheorem{definition}[equation]{Definition}
\newtheorem{notation}[equation]{Notation}
\theoremstyle{remark}
\newtheorem{remark}[equation]{Remark}
\def\bC{{\mathbb C}}
\def\bN{{\mathbb N}}
\def\bP{{\mathbb P}}
\def\bR{{\mathbb R}}
\def\bZ{{\mathbb Z}}
\def\bT{{\mathbb T}}
\def\cI{{\mathcal I}}
\def\cJ{{\mathcal J}}
\def\cK{{\mathcal K}}
\def\cO{{\mathcal{O}}}
\def\cP{{\mathcal{P}}}
\def\cR{{\mathcal R}}
\def\cS{{\mathcal S}}
\newcommand\ovl[1]{\overline{#1}}
\newcommand{\FF}{\mathbb{F}}
\newcommand{\ZZ}{\mathbb{Z}}
\newcommand{\CC}{\mathbb{C}}
\newcommand{\PP}{\mathbb{P}}
\newcommand{\TT}{\mathbb{T}}
\newcommand{\RR}{\mathbb{R}}
\newcommand{\Rcal}{\mathcal{R}}
\newcommand{\Pcal}{\mathcal{P}}
\newcommand{\Oscr}{\mathcal{O}}
\newcommand{\wt}{\widetilde}
\DeclareMathOperator{\ddiv}{div}
\DeclareMathOperator{\Pic}{Pic}
\DeclareMathOperator{\id}{id}
\DeclareMathOperator{\diag}{diag}
\DeclareMathOperator{\Hom}{Hom}
\DeclareMathOperator{\Spec}{Spec}
\DeclareMathOperator{\Ab}{Ab}
\DeclareMathOperator{\Hilb}{Hilb}
\DeclareMathOperator{\Cl}{Cl}
\DeclareMathOperator{\GL}{GL}
\DeclareMathOperator{\SL}{SL}
\DeclareMathOperator{\Sp}{Sp}
\DeclareMathOperator{\codim}{codim}
\DeclareMathOperator{\Div}{Div}
\DeclareMathOperator{\lcm}{lcm}
\DeclareMathOperator{\Mon}{Mon}
\begin{document}

\title[Cox rings of symplectic resolutions of quotient singularities]{Cox rings of some symplectic resolutions\\ of quotient singularities}
\author[M.~Donten-Bury]{Maria Donten-Bury}
\address{University of Warsaw, Institute of Mathematics, Banacha 2, 02-097 Warszawa, Poland Freie Universit\"at Berlin, Mathematisches Institut, Arnimallee 3, 14195 Berlin, Germany}
\email{M.Donten@mimuw.edu.pl}

\author[M.~Grab]{Maksymilian Grab}
\address{Instytut Matematyki UW, Banacha 2, 02-097 Warszawa, Poland}
\email{M.Grab@mimuw.edu.pl}

\begin{abstract}
We investigate Cox rings of symplectic resolutions of quotients of $\mathbb{C}^n$ by finite symplectic group actions. We generalize the main result of~\cite{cox_resolutions}. We propose a finite generating set of the Cox ring of a symplectic resolution and prove that under a condition concerning monomial valuations it is sufficient. Also, three 4-dimensional examples are described in detail. Generators of the (expected) Cox rings of symplectic resolutions are computed and in one case a resolution is constructed as a GIT quotient of the spectrum of the Cox ring.
\end{abstract}

\maketitle


\section{Introduction} 

This article is the second step towards understanding the total coordinate rings of resolutions of higher dimensional quotient singularities. It follows~\cite{cox_resolutions}, where the case of a certain 32-element symplectic group~$G$ is considered: first a generating set of a ring expected to be the Cox ring of a symplectic resolution of $\bC^4/G$ is given, and then it is used to construct all 81 symplectic resolutions of this singularity. The existence of symplectic resolutions of $\bC^4/G$ was proven before by Bellamy and Schedler, see~\cite{BellamySchedler2}, but no construction of such a resolution was known. Earlier, surface quotient singularities were used as a test case for developing ideas related to describing the Cox rings of resolutions and reconstructing these resolutions via Cox rings, see~\cite{MDB} and also~\cite{FGAL}. Similar concepts, now classical for toric varieties, were also studied for varieties with complexity one torus action, see~\cite{CoxRings}.

The aim of the present paper is further development of ideas presented in~\cite{cox_resolutions}. First, we study general properties of the Cox ring~$\cR(X)$ of a resolution~$X$ of a quotient of $V\simeq \bC^n$ by an action of a finite matrix group~$G$. We attempt to describe them by giving a (finite) generating set in a simpler ring: the tensor product of the Cox ring of the singularity $V/G$, which by~\cite{AG_finite} is isomorphic to the ring of invariants $\bC[V]^{[G,G]}$, and the coordinate ring of the Picard torus of the resolution. 

We restrict ourselves to symplectic singularities and their symplectic, i.e. crepant by~\cite[Thm.~2.5]{VerbitskyAsian}, resolutions. This allows us to benefit from additional information on the structure of resolutions, in particular the McKay correspondence, proven in the symplectic case by Kaledin in~\cite{KaledinMcKay}. The language of monomial valuations, used therein to relate divisors in the resolution to conjugacy classes in the group as suggested by Ito and Reid in~\cite{ItoReid}, is also present in our description of the generating set of the Cox ring.
Our main theoretical result is (cf.~\cite[Thm.~1.1]{cox_resolutions})

\begin{theorem*}[\ref{theorem:general-theorem}]
Let $G$ be a finite symplectic group generated by symplectic reflections, such that the commutator subgroup~$[G,G]$ does not contain any symplectic reflections. In addition, assume that condition~\eqref{assumption:lifting-valuations}, concerning monomial valuations on the rational function field of~$\bC[V]^{[G,G]}$, is satisfied. Let $t_1,\ldots,t_m$ denote the coordinates on the Picard torus~$\bT$ of~$X$. 
Then the Cox ring $\cR(X)$ of any symplectic resolution~$X$ of~$V/G$ is generated by
\begin{itemize}
\item $t_{1}^{E_{i}.C_{1}}\cdots t_{m}^{E_{i}.C_{m}}$ for $i = 1,\ldots, m$, and 
\item $\ovl{\phi}_{j} = \phi_{j}\cdot \prod\limits_{i=1}^m t_{i}^{\overline{D}_{j}.C_{i}}$ for $j=1,\ldots, n$,
\end{itemize}
where $\phi_j$ are eigenvectors of the action of~$Ab(G) = G/[G,G]$ on~$\bC[V]^{[G,G]}$, $\ovl{D}_j$ are strict transforms of corresponding divisors on~$V/G$, $E_i$ are exceptional divisors of the resolution and $C_i$ are curves which are generic fibers of the resolution restricted to~$E_i$.
\end{theorem*}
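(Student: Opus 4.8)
The plan is to reduce the statement to a concrete description of the Cox ring $\cR(X)$ that decouples the "horizontal" and "vertical" parts of divisor classes, and then to match each part with one family of generators. First I would recall the basic structure: since $X \to V/G$ is crepant and the exceptional locus has pure codimension one, the class group $\Cl(X)$ fits into an exact sequence relating $\Cl(V/G)$ (equivalently $\Pic(V/G)$, which is $\Ab(G)^\vee$-torsion by~\cite{AG_finite}) and the free abelian group $\bigoplus_i \bZ E_i$ on the exceptional divisors, with the $C_i$ providing a dual basis that splits off the Picard torus $\bT$ of $X$. Concretely, every divisor class on $X$ decomposes as a strict transform of a class from $V/G$ plus an integral combination of the $E_i$, and the intersection numbers $D.C_i$ record the $\bT$-component. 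This is exactly what the monomial factors $\prod_i t_i^{D.C_i}$ are designed to track, so the first step is to make this decomposition precise and functorial.

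Next I would set up the comparison ring $\cR(V/G) \otimes_{\bC} \bC[\bT]$, which by the isomorphism $\cR(V/G) \cong \bC[V]^{[G,G]}$ from~\cite{AG_finite} is $\bC[V]^{[G,G]} \otimes \bC[t_1^{\pm 1},\ldots,t_m^{\pm 1}]$, graded by $\Cl(V/G) \oplus \bZ^m$. The eigenvectors $\phi_j$ of the $\Ab(G)$-action generate $\bC[V]^{[G,G]}$ as a $\bC[V]^G$-algebra, and each $\phi_j$ corresponds to an effective divisor $D_j$ on $V/G$ with $\bC[V]^G$-module structure encoding sections of $\cO(D_j)$; its strict transform $\ovl D_j$ on $X$ then has a canonical section $\ovl\phi_j$, and the point is that the correct $\bT$-twist to make $\ovl\phi_j$ a genuine element of $\cR(X)$ in its natural degree is precisely $\phi_j \prod_i t_i^{\ovl D_j.C_i}$. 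One then checks that the monomials $t_1^{E_i.C_1}\cdots t_m^{E_i.C_m}$ correspond under this dictionary to the canonical sections cutting out the exceptional divisors $E_i$. So the proposed list is the image of an explicit generating set of $\cR(V/G)\otimes\bC[\bT]$ under a natural map, and the task becomes: show this map (or rather the induced map on the relevant graded pieces) is surjective onto $\cR(X)$.

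The heart of the argument is the surjectivity, and here I expect the monomial-valuation condition~\eqref{assumption:lifting-valuations} to do the essential work, mirroring~\cite[Thm.~1.1]{cox_resolutions}. The strategy is: take a homogeneous element $s \in \cR(X)$ of some class; pushing forward to $V/G$ (forgetting the exceptional twist) lands it in $\cR(V/G)$, so it is a polynomial in the $\phi_j$; the obstruction to lifting that polynomial expression back to $\cR(X)$ in the \emph{correct} degree is measured by the orders of vanishing of $s$ along the exceptional divisors $E_i$ versus the orders of vanishing predicted by the monomial factors. The McKay correspondence of Kaledin~\cite{KaledinMcKay} identifies the $E_i$ with conjugacy classes of symplectic reflections and, via the Ito--Reid philosophy~\cite{ItoReid}, the valuation $\ord_{E_i}$ pulled back to $\bC(V)^{[G,G]}$ with a monomial (weighted) valuation $v_i$. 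Condition~\eqref{assumption:lifting-valuations} is exactly the hypothesis that guarantees $\ord_{E_i}$ on a product of $\phi_j$'s is computed correctly by the $v_i$, so that the exceptional-divisor twists can be adjusted monomially without leaving the span of the proposed generators. Assembling: any monomial in the $\phi_j$, suitably corrected by a monomial in the $t_i$ (which by the assumption lies in the algebra generated by the $t_1^{E_i.C_1}\cdots t_m^{E_i.C_m}$ together with its inverse on the image of the torus characters), yields the given element of $\cR(X)$.

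The main obstacle, as in the predecessor paper, is precisely controlling $\ord_{E_i}$ of products of the $\phi_j$ in terms of the monomial valuations $v_i$ — that is, verifying that condition~\eqref{assumption:lifting-valuations} really suffices to bridge between the algebraically natural grading coming from $\bC[V]^{[G,G]}$ and the geometric grading by $\Cl(X)$ along the exceptional divisors. A secondary technical point is checking that the splitting $\Cl(X) \cong \Cl(V/G) \oplus \bZ^m$ is compatible with effectivity and with the chosen sections, so that no spurious denominators in the $t_i$ appear; this is where one uses that the $C_i$ are generic fibers of $E_i \to (\text{its image})$ and that $X$ is a \emph{symplectic} (hence crepant, projective over $V/G$) resolution, which keeps the relevant cones and intersection pairings well-behaved. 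Once these are in place, the generation statement follows by the same dévissage as in~\cite[Thm.~1.1]{cox_resolutions}, now in the general symplectic setting rather than for the single group treated there.
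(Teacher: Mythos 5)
Your proposal follows essentially the same route as the paper: embed $\cR(X)$ via $\ovl{\Theta}$ into $\bC[V]^{[G,G]}\otimes\bC[\Cl(X)]$, reduce by the decomposition $D=\overline{\varphi_*D}+\sum_j a_jE_j$ to the case of strict transforms, and use condition~\eqref{assumption:lifting-valuations} together with Kaledin's identification $\nu_i|_{\bC(V)^G}=r_i\nu_{E_i}$ to show that the discrepancy between the $t$-exponents of a monomial in the $\ovl{\phi}_j$ and those of the target section is a non-negative integral combination of the characters $t_1^{E_i.C_1}\cdots t_m^{E_i.C_m}$. This is exactly the content of Proposition~\ref{proposition:strict-transforms} and the concluding argument in section~\ref{subsection:proof-of-general-thm}, so the approaches coincide.
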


The general part is followed by a detailed study of three examples in dimension~4. In all of them we compute the generating set of the Cox ring proposed in Theorem~\ref{theorem:general-theorem}. The simplest one is the case of the symmetric group~$S_3$. Considered representation contains only one conjugacy class of symplectic reflections, i.e. just one monomial valuation is involved, hence condition~\eqref{assumption:lifting-valuations} is relatively easy to check. In this case there is only one symplectic resolution.

For the wreath product $\ZZ_{2}\wr S_{2}$, isomorphic to the dihedral group~$D_8$, the proof of condition~\eqref{assumption:lifting-valuations} requires an additional element. Since there are two conjugacy classes of symplectic reflections, one has to study two monomial valuations and the relation between them. In this case there are two symplectic resolution of $\bC^4/G$, which differ by a flop. We describe their geometry, in particular the structure of the central fiber, by analyzing directly the Picard torus action on the spectrum of their Cox ring and its GIT quotients.

In the last example we tackle the case of the reducible representation of the binary tetrahedral group, which will be denoted here by~$G_4$. However, the proposed set of the Cox rings generators turns out to be too complex to prove condition~\eqref{assumption:lifting-valuations} for this group.

In section~\ref{section_what_is_known} we explain how these examples fit in the general scheme of knowledge about the existence of symplectic resolution of symplectic quotient singularities.

\subsection{Outline of the paper}

In section~\ref{section:setting} we recall basic information about the Cox rings after~\cite{CoxRings} and summarize the results of section~3 of~\cite{cox_resolutions}, which gives the setting for describing the generators of Cox rings of resolutions of quotient singularities. Then, sections~\ref{subsection:candidate-generators} and~\ref{subsection:valuatins-vs-intersections} are devoted to the precise formulation of Theorem~\ref{theorem:general-theorem} together with additional assumption~\eqref{assumption:lifting-valuations}, and section~\ref{subsection:proof-of-general-thm} contains its proof.

The examples part is started by the case of $S_3$: the generators from Theorem~\ref{theorem:general-theorem} are computed in section~\ref{section_S3_generators} and in section~\ref{section_lifting_S3} we prove that they really generate the Cox ring of the resolution. The generators for $\ZZ_{2}\wr S_{2}$ are described in section~\ref{section_D8_generators} and we sketch parts of the proof that we indeed obtain the Cox ring of a symplectic resolution (a complete argument will be included in~\cite{valuationslift}). Then we investigate the geometry of the quotients of the spectrum of this ring: we embed it in an affine space in section~\ref{section_D8_ideal}, analyze stability and prove the smoothness of certain GIT quotients in sections~\ref{section_D8_stability},~\ref{section_D8_smoothness} and in section~\ref{section_D8_central_fiber} the structure of the central fiber is described. Finally, in section~\ref{section:G4} the proposed generators of the Cox ring for~$G_4$ are given.

\subsection{Existence of symplectic resolutions of quotient singularities}\label{section_what_is_known}

As proved by Verbitsky in~\cite{VerbitskyAsian}, if a symplectic resolution of~$\bC^n/G$ exists then~$G$ is generated by symplectic reflections, i.e. elements which fix a subspace of codimension~2. Note that this is a necessary but insufficient criterion. 

Finite symplectically irreducible groups generated by symplectic reflections are classified by Cohen in~\cite{Cohen}. The problem of the existence of symplectic resolutions was investigated in~\cite{GiKa, Bell, BellamySchedler, BellamySchedler2}. According to~\cite{Bell} and~\cite[4.1]{BellamySchedler2}, for the following (symplectically irreducible) $G\subset\Sp_{2n}(\CC)$ it is known that the symplectic resolution exists.
\begin{enumerate}
\item $S_{n+1}$ acting on $\CC^{2n}$ via direct sum of two copies of standard $n$-dimensional representation of $S_{n+1}$. Here the resolution can be constructed via the Hilbert scheme $\Hilb^{n+1}(\CC^{2})$. The example of~$S_3$ from section~\ref{section:S3} is the (only) 4-dimensional representative of his family.
\item $H\wr S_{n} = H^{n}\rtimes S_{n}$, where $H$ is a finite irreducible subgroup of $\SL_{2}(\CC)$, so that $\CC^{2}/H$ is one of the du Val singularities, and $S_{n}$ acts on $H^{n}$ by permutations of coordinates. The natural product representation of $H^{n}$ on $\CC^{2n}$ extend naturally to the action of $H\wr S_{n}$. The resolution is $\Hilb^{n}(S)$ where $S$ is a minimal resolution of the du Val singularity $\CC^{2}/H$. The example of $\ZZ_{2}\wr S_{2}$ analyzed in section~\ref{section:Z2wr} belongs to this family, in particular we show how to describe the resolution as a GIT quotient of the spectrum of its Cox ring.
\item A representation $G_{4}\subset\Sp_{4}(\CC)$ of the binary tetrahedral group. The existence of a symplectic resolution was first proved by Bellamy in~\cite{Bell} and constructed later by Lehn and Sorger, see~\cite{LehnSorger}. We consider this case in section~\ref{section:G4}.
\item A group of order~$32$ acting on $\CC^{4}$, for which the existence of symplectic resolutions was proved in~\cite{BellamySchedler}, and their construction and the Cox ring were investigated in~\cite{cox_resolutions}.
\end{enumerate}

Apart from this list, \cite{Bell} and~\cite{BellamySchedler2} give also negative results. The non-existence of symplectic resolutions is shown for certain infinite families of representations from the Cohen's classification~\cite{Cohen}. On the other hand, even in dimension~$4$ there still are infinitely many cases for which the question of existence of the symplectic resolution remains unanswered.

\enlargethispage{1cm}
\subsection*{Acknowledgements}

The authors are very grateful to Jaros\l{}aw Wi\'sniewski for sharing his ideas in numerous discussions and for his comments on preliminary versions of this paper.

The first author was partially supported by a Polish National Science Center project 2013/11/D/ST1/02580. This work was completed when the first author held a Dahlem Research School Postdoctoral Fellowship at Freie Universit\"at Berlin and the second author held a Doctoral Fellowship of Warsaw Center of Mathematics and Computer Science.


\section{Setting}\label{section:setting}

The aim of this section is to recall the general setting for the construction of Cox rings of symplectic resolutions of quotient singularities -- we summarize the results of~\cite[Sect.~2.D, 3.A, 3.B]{cox_resolutions}.

Let $V =\bC^{2n}$. We consider quotients $Y = V/G$, where $G\subset\Sp(V)$ is a finite subgroup of linear transformations of~$V$ preserving a symplectic form on~$V$. 
Assume that a symplectic resolution of singularities $$\varphi \colon X\to Y$$ exists. Our goal is to determine the generators of the Cox ring of~$X$, and then to use this ring to recover the construction of the resolution. 

Recall that the Cox ring, or the total coordinate ring, of a normal, irreducible complex algebraic variety $Z$ is the module
$$\cR(Z)=\bigoplus_{[D]\in\Cl(Z)}\Gamma(Z,\cO_Z(D))$$
with multiplication coming from the identification 
$$\Gamma(Z,\cO_Z(D))= \{f\in\bC(Z)^*: \ddiv(f)+D\geq 0\}\cup\{0\}$$
of global sections with rational functions on~$Z$. If the class group of Weil divisors $\Cl(Z)$ is finitely generated and free, as by~\cite[Lem.~2.13]{cox_resolutions} it happens in the case of resolutions of quotient singularities, it is enough to fix inclusions $\Gamma(Z,\cO_Z(D_i)) \subset \bC(Z)$ for chosen representatives $D_i$ of a basis $\{[D_i] \colon i=1,\ldots,n\}$ of $\Cl(Z)$, and multiply sections as corresponding rational functions. Otherwise, relations in $\Cl(Z)$ have to be taken into account, for the details see~\cite[Sect.~1.4]{CoxRings}.

Assume that $\Cl(Z)$ is finitely generated and free. If $\cR(Z)$ is finitely generated $\bC$-algebra (thus $Z$ is a Mori Dream Space), there is an action of the torus $\bT_{\Cl(Z)} = \Hom(\Cl(Z),\bC^*)$ on its spectrum, coming from the $\Cl(Z)$ grading. If in addition $Z$ is projective over $\bC[Z]$, we may recover $Z$, and its other birational models, as GIT quotients of $\Spec \cR(Z)$ by $\bT_{\Cl(Z)}$. This is the main idea behind resolving quotient singularities via Cox rings: if $\cR(Z)$ can be found just based on the group structure data and general information on the resolution, one may try to construct new resolutions as GIT quotients of $\Spec \cR (Z)$.

\subsection{Embedding of the Cox ring of a resolution}

 By $\cR(X)$ and $\cR(Y)$ we denote the Cox rings of the resolution~$X$ and the singular quotient~$Y$ respectively. One can describe the class groups of~$X$ and~$Y$, by which these rings are graded.
\begin{proposition}
By~\cite[Prop.~3.9.3]{Benson} and~\cite[Lem.~2.13, 2.14]{cox_resolutions} we have:
\begin{enumerate}[label=(\alph*)]
\item $\Cl(Y)$ is naturally isomorphic to the group $G^{\vee} = \Hom(G,\CC^{*})=\Ab(G)^{\vee}$,
\item $\Cl(X) = \Pic(X) \simeq \bZ^m$, where $m$ is the number of irreducible components of the exceptional divisor, which by the McKay correspondence (see~\cite{KaledinMcKay}) is equal to the number of conjugacy classes of symplectic reflections in $G$.
\end{enumerate}
\end{proposition}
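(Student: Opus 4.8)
The plan is to assemble the two cited ingredients together with the symplectic hypothesis. \textbf{For (a)}, I would first check that the hypothesis of \cite[Prop.~3.9.3]{Benson} is met: for a finite linear group the class group of the quotient is $\Ab(G_0)^\vee$, where $G_0$ is $G$ modulo the normal subgroup generated by pseudoreflections (elements with a fixed hyperplane), so it suffices to see that a finite-order $g\in\Sp(V)$ has no fixed hyperplane. Indeed $g$ is diagonalizable, and for $g\in\Sp(V)$ the eigenspaces $V_\lambda,V_\mu$ are symplectically orthogonal unless $\lambda\mu=1$; if $\ker(g-\id)$ had codimension~$1$ there would be a single eigenvalue $\lambda\ne 1$ with $\dim V_\lambda=1$, forcing $\lambda=\lambda^{-1}=-1$ and $V_{-1}$ a one-dimensional symplectic subspace, which is impossible. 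Hence $G_0=G$, and $\Cl(Y)\cong\Ab(G)^\vee$; the isomorphism is the natural one sending a character $\chi$ to the class of the divisor of a $\chi$-semiinvariant (equivalently, identifying $\cO_Y(D_\chi)$ with the module of $\chi$-semiinvariants in $\bC[V]^{[G,G]}$, hence with the corresponding eigenspace of the $\Ab(G)$-action).

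\textbf{For (b)}, since $X$ is smooth, $\Cl(X)=\Pic(X)$. The resolution $\varphi$ is an isomorphism over the smooth locus of $Y$, whose complement has codimension $\ge 2$, so $\Cl\big(X\setminus\Exc(\varphi)\big)\cong\Cl(Y)$, which is finite by (a). The excision sequence then reads
$$\bigoplus_{i=1}^m\bZ\,E_i\xrightarrow{\ \iota\ }\Cl(X)\longrightarrow\Cl(Y)\longrightarrow 0,$$
where $E_1,\dots,E_m$ are the irreducible components of $\Exc(\varphi)$. I would check $\iota$ is injective: if $\sum a_iE_i=\ddiv(f)$, then $f$ is a unit on $X\setminus\Exc(\varphi)\cong Y\setminus\Sing(Y)$, hence, $Y$ being normal with $\Sing(Y)$ of codimension $\ge 2$, a global unit of the $\bN$-graded ring $\bC[V]^{G}$; its degree-$0$ part being $\bC$, we get $f\in\bC^*$ and all $a_i=0$. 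Thus $\Cl(X)$ is finitely generated of rank exactly $m$, an extension of a finite group by $\bZ^m$, so $\Cl(X)\cong\bZ^m\oplus T$ with $T=\Cl(X)_{\mathrm{tors}}$ embedding into $\Cl(Y)$. To kill $T$ I would invoke that a symplectic resolution of $\bC^{2n}/G$ is simply connected (a nontrivial torsion line bundle would produce a nontrivial connected cyclic étale cover), which together with the rank count is what \cite[Lem.~2.13, 2.14]{cox_resolutions} provides; this gives $\Cl(X)=\Pic(X)\simeq\bZ^m$. Finally, $m=\#\{\text{components of }\Exc(\varphi)\}$ equals the number of conjugacy classes of symplectic reflections in $G$ by Kaledin's symplectic McKay correspondence \cite{KaledinMcKay}.

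The only point that is not a formal manipulation of the excision sequence and the graded-ring argument is the vanishing of the torsion summand $T$: the sequence alone permits a finite torsion group, and ruling it out requires the simple connectedness of the symplectic resolution (equivalently, torsion-freeness of $H^2(X,\bZ)$), which is exactly the input taken from \cite[Lem.~2.13]{cox_resolutions}. Everything else — the absence of pseudoreflections, the identification $\Cl(Y)\cong\Ab(G)^\vee$, linear independence of the $E_i$, and the count of components — is standard once the McKay correspondence is in hand.
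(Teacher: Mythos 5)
Your proposal is correct and matches the paper's (purely citational) argument: the paper proves this proposition simply by invoking \cite[Prop.~3.9.3]{Benson} for (a) and \cite[Lem.~2.13, 2.14]{cox_resolutions} for (b), and your writeup faithfully unpacks exactly those ingredients, including the verification that finite-order symplectic transformations admit no pseudoreflections and the correct identification of simple connectedness as the one input needed to kill torsion in $\Cl(X)$. No gaps.
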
   

We have a natural surjective map $\varphi_{*}\colon \Cl(X)\to \Cl(Y)$ induced by the push-forward of Weil divisors via~$\varphi$. The resolution $\varphi$ induces also a natural injective homomorphism $\Gamma(X,\Oscr_{X}(D))\to \Gamma(Y,\Oscr_{Y}(\varphi_{*}D))$ of modules of global sections of rank 1 reflexive sheaves associated with a Weil divisor $D$ on $X$ and its push-forward $\varphi_*(D)$, see~\cite[3.A]{cox_resolutions}. Altogether we obtain a homomorphism of graded rings
\begin{equation*}
\varphi_{*}\colon \Rcal(X)\to\Rcal(Y).
\end{equation*}

There is also a comultiplication homomorphism
$$\Rcal(X)\to \Rcal(X)\otimes \CC[\Cl(X)],$$
given by the action of the Picard torus $\TT_{\Cl(X)}$ on $\Spec \Rcal(X)$, compatible with the $\Cl(X)$-grading. On the level of graded pieces of $\Rcal(X)$ it is given by
$$\Gamma(X,\Oscr_{X}(D))\ni f \mapsto f\otimes \chi^{D}\in \Gamma(X,\Oscr_{X}(D))\otimes \CC[\Cl(X)],$$
where $\chi^{D}$ is a character of torus $\TT_{\Cl(X)}$ corresponding to the class of $D$ in $\Cl(X)$.

\begin{proposition}[{\cite[Prop.~3.8]{cox_resolutions}}]\label{proposition:cox-rings-embedding}
The composition of the comultiplication homomorphism and $\varphi_{*}\otimes \id$
\begin{equation*}
\Theta:\Rcal(X)\to \Rcal(X)\otimes \CC[\Cl(X)]\to \Rcal(Y)\otimes \CC[\Cl(X)]
\end{equation*}
is an injective graded ring homomorphism.
\end{proposition}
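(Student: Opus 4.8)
The plan is to separate the statement into its formal and substantive parts: that $\Theta$ is a graded ring homomorphism is routine, while injectivity is the real content, and the point is that precomposition with the comultiplication is exactly what compensates for the failure of $\varphi_*$ alone to be injective.

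First I would record that $\Theta$ is a ring homomorphism simply because it is a composition of two ring homomorphisms — the comultiplication $\Rcal(X)\to\Rcal(X)\otimes\CC[\Cl(X)]$ coming from the Picard-torus action and $\varphi_*\otimes\id$. It is graded for the $\Cl(X)$-grading in which $\Rcal(Y)$ sits in degree $0$ and $\CC[\Cl(X)]$ carries its tautological grading: a homogeneous section $f\in\Gamma(X,\Oscr_X(D))$ is sent to $\varphi_*(f)\otimes\chi^{[D]}$, which has degree $[D]$. All of this is immediate from the formulas recalled above, so I would not dwell on it.

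For injectivity, the heuristic I would make precise is that $\varphi_*\colon\Rcal(X)\to\Rcal(Y)$ collapses the classes of the exceptional divisors, i.e. the kernel of $\varphi_*\colon\Cl(X)\to\Cl(Y)$, and with them the sections supported on the exceptional locus; the tensor factor $\CC[\Cl(X)]$ remembers the original $\Cl(X)$-degree and so retains this information. Concretely: since $\Cl(X)\simeq\ZZ^m$ is free (by \cite[Lem.~2.13]{cox_resolutions}), every $\xi\in\Rcal(X)$ is a finite sum $\xi=\sum_{\mathbf d}f_{\mathbf d}$ with $f_{\mathbf d}\in\Gamma(X,\Oscr_X(D_{\mathbf d}))$ over finitely many distinct classes $\mathbf d$, where $D_{\mathbf d}$ are the fixed representatives. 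Then
\[
\Theta(\xi)=\sum_{\mathbf d}\varphi_*(f_{\mathbf d})\otimes\chi^{\mathbf d}\in\Rcal(Y)\otimes\CC[\Cl(X)],
\]
and since the characters $\chi^{\mathbf d}$ for distinct $\mathbf d$ are part of a $\CC$-basis of $\CC[\Cl(X)]$, the vanishing $\Theta(\xi)=0$ forces $\varphi_*(f_{\mathbf d})=0$ for every $\mathbf d$. It then remains to observe that on a single graded piece $\varphi_*\colon\Gamma(X,\Oscr_X(D))\to\Gamma(Y,\Oscr_Y(\varphi_*D))$ is injective: under the identification $\bC(X)=\bC(Y)$ given by the birational morphism $\varphi$ this map is just an inclusion of spaces of rational functions, hence has trivial kernel. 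Therefore each $f_{\mathbf d}$ vanishes and $\xi=0$.

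The hard part here is conceptual rather than computational — recognizing that composing with the comultiplication is what restores injectivity — together with care about the two structural inputs that make the degree-component argument legitimate: that $\Rcal(X)=\bigoplus_{\mathbf d}\Gamma(X,\Oscr_X(D_{\mathbf d}))$ (which uses freeness of $\Cl(X)$) and that the comultiplication is diagonal with respect to this decomposition, scaling the $\mathbf d$-th summand by $\chi^{\mathbf d}$. Granting these, the only geometric ingredient is the injectivity of $\varphi_*$ on graded pieces, which reduces to the fact that a birational morphism pushes nonzero rational functions to nonzero rational functions.
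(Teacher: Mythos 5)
Your argument is correct and is essentially the standard one: the paper itself only cites \cite[Prop.~3.8]{cox_resolutions} without reproducing a proof, and that proof proceeds exactly as you do — the comultiplication tags each graded piece with its character $\chi^{[D]}$, linear independence of characters in $\CC[\Cl(X)]$ reduces injectivity to the single graded pieces, and there $\varphi_*\colon\Gamma(X,\Oscr_X(D))\to\Gamma(Y,\Oscr_Y(\varphi_*D))$ is the identity on rational functions under $\bC(X)=\bC(Y)$, hence injective. No gaps.
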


By~\cite[Thm.~3.1]{AG_finite} (see also~\cite[Lem.~6.2]{MDB}) the Cox ring $\Rcal(Y)$ can be viewed as the ring of invariants $\CC[V]^{[G,G]}\subset \CC[V]$ of commutator subgroup $[G,G]$, with grading given by the induced action of $\Ab(G)$ on $V/[G,G]$. Hence by Prop.~\ref{proposition:cox-rings-embedding} we may identify $\Rcal(X)$ with a subring of a better understood ring: 
\begin{equation}
\Rcal(X) \subseteq \CC[V]^{[G,G]}\otimes \CC[\Cl(X)].
\end{equation}

Thus, the task of determining $\Rcal(X)$ is reduced to pointing out a set of elements of $\CC[V]^{[G,G]}\otimes \CC[\Cl(X)]$ which generate $\Rcal(X)$ as a $\bC$-algebra.

\subsection{Describing the image of $\Theta$}
\label{subsection:image-of-theta}

Let $E_{1},\ldots, E_{m}$ be the components of exceptional divisor of the resolution $\varphi \colon X \to Y$. Let $C_{i}$ denote the generic fiber of $\varphi|_{E_{i}}\colon E_{i}\to \varphi(E_{i})$. As it was noted in~\cite[Sect.~2.D]{cox_resolutions}, classes of $C_{1},\ldots, C_{m}$ form a basis of the vector space $N_{1}(X/Y)$. The dual basis of $N^{1}(X/Y)$ (via the intersection pairing) will be denoted by $L_{1},\ldots, L_{m}$. Then the coefficients of a divisor~$D$ on~$X$ in this basis are the intersection numbers $(C_{i}.D)$.

\begin{proposition}[{\cite[2.16]{cox_resolutions}}]\label{prop_diag_class_groups}
We have a commutative diagram, whose rows are exact sequences:
\begin{equation}\label{diagram:class-groups}
\xymatrix{
0 \ar[r] & \bigoplus\limits_{i=1}^{m}\ZZ E_{i} \ar[r]\ar@{=}[d] & \Cl(X) \ar[r]^{\varphi_{*}} \ar@{^{(}->}[d] & \Cl(Y) \ar[r] \ar@{^{(}->}[d] & 0 \\
0 \ar[r] & \bigoplus\limits_{i=1}^{m}\ZZ E_{i} \ar[r] & \bigoplus\limits_{i=1}^{m}\ZZ L_{i} \ar[r] & Q \ar[r] & 0,
}
\end{equation}
Here the homomorphism $\bigoplus_i\ZZ E_{i} \to \bigoplus_i\ZZ L_{i}$ takes $E_{i}$ to $\sum_{i}(E_{i}.C_{j})L_{j}$, and the group $Q$ is defined as its cokernel. The image of $ D \in\Cl(Y)$ in $Q$ is given by $D\mapsto \sum_{i}(\varphi^{-1}_{*}(D).C_{i})[L_{i}]$. 
\end{proposition}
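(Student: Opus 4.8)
The plan is to build the diagram out of the intersection pairing and then deduce the exactness of the bottom row and the injectivity of the right-hand vertical map from the snake lemma, the only substantive geometric input being the injectivity of the middle vertical map.

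\emph{Top row.} Surjectivity of $\varphi_*\colon\Cl(X)\to\Cl(Y)$ was recorded above, so only $\ker\varphi_*$ needs to be identified. If $\varphi_*(D)=\ddiv(f)$ for some $f\in\CC(X)^*=\CC(Y)^*$, then $D-\ddiv(f)$ has trivial push-forward; as $\varphi$ is an isomorphism away from $E_1\cup\dots\cup E_m$, such a divisor is supported on the exceptional locus, so $D-\ddiv(f)=\sum_i a_iE_i$ and $[D]\in\sum_i\ZZ[E_i]$. Since conversely each $[E_i]$ lies in $\ker\varphi_*$, we obtain $\ker\varphi_*=\sum_i\ZZ[E_i]$. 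Because $\Cl(X)\cong\ZZ^m$ and $\Cl(Y)=\Ab(G)^{\vee}$ is finite, $\ker\varphi_*$ has rank $m$, so the surjection $\bigoplus_i\ZZ E_i\twoheadrightarrow\ker\varphi_*$ of rank-$m$ free abelian groups is an isomorphism; in particular $\bigoplus_i\ZZ E_i\to\Cl(X)$ is injective and the top row is exact.

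\emph{Middle map and bottom row.} Let $\beta\colon\Cl(X)=\Pic(X)\to\bigoplus_i\ZZ L_i$ be the map sending a class $D$ to $\sum_i(D.C_i)L_i$, i.e. the natural map $\Cl(X)\to N^1(X/Y)$ expressed in the basis $L_1,\dots,L_m$ dual to $C_1,\dots,C_m$ (the image lies in the lattice $\bigoplus_i\ZZ L_i$ since the intersection numbers are integers). The injectivity of $\beta$ is equivalent to the nondegeneracy of the matrix $\big((E_i.C_j)\big)_{i,j}$, i.e. to the vanishing of the group of $\varphi$-numerically trivial classes; this is the crux, and I would take it from the set-up recalled above following \cite[Sect.~2.D]{cox_resolutions} --- it holds because $\Cl(X)$ is torsion-free whereas a $\varphi$-numerically trivial class, being $\varphi$-semiample, would be pulled back from the affine base $Y$ and hence torsion. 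Granting this, define $Q$ to be the cokernel of the map $\bigoplus_i\ZZ E_i\to\bigoplus_i\ZZ L_i$, $E_i\mapsto\sum_j(E_i.C_j)L_j$, which is precisely $\beta$ composed with the inclusion $\bigoplus_i\ZZ E_i\hookrightarrow\Cl(X)$, hence injective; so the bottom row is exact, and the left-hand square commutes by construction.

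\emph{Right square and explicit formula.} With both rows exact, the left vertical map the identity and $\beta$ injective, the snake lemma provides the exact sequence $0=\ker(\id)\to\ker\beta\to\ker(\Cl(Y)\to Q)\to\coker(\id)=0$, whence $\ker(\Cl(Y)\to Q)=0$; thus the induced map $\Cl(Y)\hookrightarrow Q$ exists and is injective, completing the diagram. For its explicit form, chase the distinguished lift $\varphi^{-1}_*(D)\in\Cl(X)$ of a class $D\in\Cl(Y)$ (the class of the strict transform of a representative, so that $\varphi_*\varphi^{-1}_*(D)=D$): its image in $Q$ is the class of $\beta(\varphi^{-1}_*(D))=\sum_i(\varphi^{-1}_*(D).C_i)L_i$, which is the asserted formula, and it is independent of all choices since two lifts of $D$ differ by an element of $\bigoplus_i\ZZ E_i=\ker\varphi_*$, which maps to $0$ in $Q$. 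The sole non-formal ingredient is the injectivity of $\beta$; the rest is the standard divisor-class sequence of a resolution together with a diagram chase.
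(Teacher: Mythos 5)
Your argument is correct, and since the paper itself offers no proof of this statement --- it is quoted verbatim from \cite[2.16]{cox\_resolutions} --- you have in effect reconstructed the missing argument rather than paralleled one. The top-row analysis (identifying $\ker\varphi_*$ with the subgroup generated by the $[E_i]$ and upgrading the surjection $\bigoplus_i\ZZ E_i\to\ker\varphi_*$ to an isomorphism by comparing ranks, using $\Cl(X)\cong\ZZ^m$ and finiteness of $\Cl(Y)$), the snake-lemma deduction of injectivity of $\Cl(Y)\to Q$, and the observation that the formula $D\mapsto\sum_i(\varphi^{-1}_*(D).C_i)[L_i]$ is well defined precisely because the ambiguity in choosing a lift lies in $\bigoplus_i\ZZ E_i$, are all exactly right. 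The one place where your route is heavier than necessary is the injectivity of $\beta\colon\Cl(X)\to\bigoplus_i\ZZ L_i$: your sketch via ``$\varphi$-numerically trivial $\Rightarrow$ $\varphi$-semiample $\Rightarrow$ pulled back from $Y$ $\Rightarrow$ torsion'' does go through for a crepant resolution (relative base-point freeness applies since $K_X\equiv_\varphi 0$ and every divisor is $\varphi$-big for $\varphi$ birational), but it invokes MMP machinery that can be avoided. Once the top row is exact, $\bigoplus_i\ZZ E_i$ is a finite-index subgroup of the torsion-free group $\Cl(X)$, and on that subgroup $\beta$ is given by the intersection matrix $(E_i.C_j)$, which is invertible as a direct sum of Cartan matrices (the fact the paper records immediately after the proposition, citing Wierzba--Wi\'sniewski and Andreatta--Wi\'sniewski); injectivity on a finite-index subgroup of a torsion-free group gives injectivity outright. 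That elementary version is closer in spirit to how the paper uses the result and removes the only step of your proof that is not fully self-contained.
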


It is known that variety $V/G$ has du Val singularities in codimension $2$ and the intersection matrix $(E_{i}.C_{j})$ is a direct sum of corresponding Cartan matrices, see~\cite[Thm.~1.3]{WierzbaWisniewski} and~\cite[Thm.~1.4]{AW}. In particular, $(E_{i}.C_{j})$ is invertible and so the lattice $\Cl(X) = \Pic(X) \subset N^{1}(X/Y)$ is a finite index sublattice of the lattice $\bigoplus_{i=1}^{m} \ZZ L_{i}\subset N^{1}(X/Y)$.

\begin{remark}\label{remark:class-group-sublattice} Note that by Proposition~\ref{prop_diag_class_groups} we have $\Cl(X) = \bigoplus_i\ZZ L_{i}$ if and only if the index of subgroup $\bigoplus_{i}\ZZ E_{i}\subseteq \bigoplus_i\ZZ L_{i}$ is equal to the order of $\Cl(Y)\cong \Ab(G)=G/[G,G]$. The index of $\bigoplus_{i}\ZZ E_{i}\subseteq \bigoplus_i\ZZ L_{i}$ is the absolute value of the determinant of the intersection matrix $(E_{i}.C_{j})$. In the case of the 32-element group considered in~\cite{cox_resolutions} the order of $\Ab(G)$ is smaller than $|\det (E_{i}.C_{j})_{i,j}|$: there $\Ab(G) \cong \ZZ^{4} $ but $(E_{i}.C_{j})_{i,j}$ is the direct sum of 5 copies of $(-2)$. However, it is the only case in dimension $4$ where it is known that a symplectic resolution exists and 
$|\Ab(G)| < |\det(E_{i}.C_{j})|$. This can be verified via direct inspection of 4-dimensional cases from the list in section~\ref{section_what_is_known}.
\begin{itemize}
\item For groups of the form $H\wr S_{2}$, where $H\subset \SL_{2}(\CC)$, the intersection matrix is a direct product of two Cartan matrices: one of $A_1$ type, i.e. $(-2)$, and $C_{H}$ corresponding to the singularity $\CC^{2}/H$. We have $\Ab(H\wr S_{2}) = \Ab(H)\times S_{2}$. The claim follows from the observation that $|\Ab(H)| = |\det C_H|$.
\item For the group $S_{3}$ we have $|\Ab(G)|=2$ and the corresponding Cartan matrix is $(-2)$.
\item For the group $G_{4}$ we have $|\Ab(G)| = 3$ and $(E_{i}.C_{j})$ is $A_{2}$-type Cartan matrix, so $\det(E_{i}.C_{j}) = 3$.
\end{itemize}

In particular, in examples given in sections~\ref{section:S3}-\ref{section:G4} the diagram~\eqref{diagram:class-groups} reduces to one row.
\end{remark}

If we embed $\CC[\Cl(X)]$ into $\CC\left[\bigoplus_i\ZZ L_{i}\right]$ and denote by $t_i$ the variable corresponding to $[L_i]$, the homomorphism $\Theta$ introduced in Proposition~\ref{proposition:cox-rings-embedding} becomes the injective graded ring homomorphism
\begin{equation}
\overline{\Theta}:\Rcal(X)\to \Rcal(Y)[t_{1}^{\pm 1},\ldots, t_{m}^{\pm 1}].
\end{equation}
In particular, we may identify $\Rcal(X)$ with its image $\overline{\Theta}(\Rcal(X))$. 

Note that by Remark~\ref{remark:class-group-sublattice} in cases $S_{3}$, $\ZZ_{2}\wr S_{2}$, $G_{4}$ (see sections~\ref{section:S3}-\ref{section:G4}) we have $\Cl(X) = \bigoplus_i\ZZ L_{i}$, that is $\overline{\Theta}=\Theta$. 

To describe the elements of the image of $\overline{\Theta}$ we will use the following convention (see~\cite[Sect.~2.A]{cox_resolutions}).

\begin{notation}\label{notation_corresp_section}
Let $D$ be a Weil divisor on a variety $Z$ and let $\Oscr(D)$ be the associated reflexive sheaf. For each section in $\Gamma(V, \Oscr(D)) = \{f\in K(Z)^{*}\ : \ \ddiv f + D \ge 0\}\cup \{0\}$ one may take the corresponding effective divisor $D' = \ddiv f + D$. We denote the section corresponding to the divisor $D'$ by $f_{D'}$, it is determined up to the multiplication by a constant. If $Z'\to Z$ is a birational map then we will denote the strict transform of $D$ by $\ovl{D}$.
\end{notation}

The next result, cf.~\cite[Cor.~3.11]{cox_resolutions}, follows directly from the definition of $\overline{\Theta}$.
\begin{proposition}\label{proposition:image-via-theta}
If $D\in \Div(X)$ and $f_{D}\in \Rcal(X)$, then 
\begin{equation}
\overline{\Theta}(f_{D}) = f_{\varphi_{*}D}\cdot t_{1}^{D.C_{1}}\cdots t_{m}^{D.C_{m}}.
\end{equation}
\end{proposition}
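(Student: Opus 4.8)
The plan is to unwind the definition of $\overline{\Theta}$ directly, following a homogeneous element $f_{D}\in\Rcal(X)$ through the three maps that constitute it: the comultiplication, the map $\varphi_{*}\otimes\id$, and the embedding $\CC[\Cl(X)]\hookrightarrow\CC[\bigoplus_{i}\ZZ L_{i}]=\CC[t_{1}^{\pm1},\dots,t_{m}^{\pm1}]$. By Notation~\ref{notation_corresp_section} the element $f_{D}$ lies in the graded piece of $\Rcal(X)$ indexed by the class $[D]\in\Cl(X)$ and is, as an element of $\bC(X)^{*}$, the rational function $f$ for which $D=\ddiv f+D_{0}$ with $D_{0}$ the divisor used to represent $[D]$. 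The comultiplication sends $f_{D}$ to $f_{D}\otimes\chi^{[D]}$, and $\varphi_{*}\otimes\id$ sends this to $\varphi_{*}(f_{D})\otimes\chi^{[D]}$; so everything reduces to identifying the two factors.

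For the first factor I would show $\varphi_{*}(f_{D})=f_{\varphi_{*}D}$. Since $\varphi$ is birational we identify $\bC(X)=\bC(Y)$, and then the natural injection $\Gamma(X,\Oscr_{X}(D_{0}))\hookrightarrow\Gamma(Y,\Oscr_{Y}(\varphi_{*}D_{0}))$ defining $\varphi_{*}$ on this graded piece is simply the inclusion of subspaces of $\bC(Y)$. As push-forward of Weil divisors is additive and carries principal divisors to principal divisors, $\varphi_{*}(\ddiv_{X}f+D_{0})=\ddiv_{Y}f+\varphi_{*}D_{0}$, so the effective divisor attached to the section $\varphi_{*}(f_{D})$ is $\varphi_{*}D$, which is exactly the assertion $\varphi_{*}(f_{D})=f_{\varphi_{*}D}$. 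For the second factor I would recall from the paragraph preceding Proposition~\ref{prop_diag_class_groups} that $L_{1},\dots,L_{m}$ is the basis of $N^{1}(X/Y)$ dual (via the intersection pairing) to the curve classes $C_{1},\dots,C_{m}$; hence under the finite-index inclusion $\Cl(X)=\Pic(X)\subseteq\bigoplus_{i}\ZZ L_{i}$ the class $[D]$ is written $\sum_{i}(D.C_{i})L_{i}$, so the character $\chi^{[D]}$ is carried to the Laurent monomial $t_{1}^{D.C_{1}}\cdots t_{m}^{D.C_{m}}$. Combining the two gives $\overline{\Theta}(f_{D})=f_{\varphi_{*}D}\cdot t_{1}^{D.C_{1}}\cdots t_{m}^{D.C_{m}}$.

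I do not expect a genuinely hard step here: the statement is essentially bookkeeping with the constructions assembled in Section~\ref{section:setting}, which is why it may be said to follow directly from the definition of $\overline{\Theta}$. The only two points that deserve a line of care are the identification $\bC(X)=\bC(Y)$, which turns $\varphi_{*}$ into an honest inclusion of function fields so that $f_{D}$ and $\varphi_{*}(f_{D})$ are the same rational function, and the fact that the integers $D.C_{i}$ depend only on the linear equivalence class of $D$ (the $C_{i}$ being complete curves, intersection against them factors through $\Cl(X)$) — this is what legitimizes reading the exponents of the $t_{i}$ off the dual-basis expansion of $[D]$.
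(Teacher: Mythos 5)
Your proposal is correct and is exactly the unwinding the paper has in mind: the paper offers no written proof, asserting only that the result ``follows directly from the definition of $\overline{\Theta}$,'' and your chain (comultiplication gives $f_D\otimes\chi^{[D]}$, push-forward of sections identifies $\varphi_*(f_D)=f_{\varphi_*D}$ via $\varphi_*(\ddiv_X f+D_0)=\ddiv_Y f+\varphi_*D_0$, and the dual-basis expansion $[D]=\sum_i(D.C_i)L_i$ turns $\chi^{[D]}$ into $t_1^{D.C_1}\cdots t_m^{D.C_m}$) is precisely that computation. The two points you flag for care — the identification $\bC(X)=\bC(Y)$ and the well-definedness of the exponents on classes — are the right ones.
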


\subsection{Monomial valuations}
\label{subsection:monomial-valuations}
We will need the concept of monomial valuations, which are used e.g. in~\cite{KaledinMcKay} in the context of the McKay correspondence for resolutions of quotient symplectic singularities. For each positive integer~$r$ fix a root of unity $\epsilon_{r}\in\CC^{*}$ of order~$r$. Take a group monomorphism $\mu_{r} = \langle \epsilon_{r}\rangle \to \GL_{n}(\CC)$. This is equivalent to fixing a linear operator 
$T\in \GL_{n}(\CC)$ of order $r$, the image of the chosen generator $\epsilon_{r}$. Assume that $T$ acts on $\CC^{n}$ with coordinates $x_{1},\ldots,x_{n}$ by a diagonal matrix $\diag(\epsilon_{r}^{a_{1}},\ldots, \epsilon_{r}^{a_{n}})$ where $0\le a_{i}<r$.
\begin{definition}
The monomial valuation $\nu_{T}:\CC(x_{1},\ldots,x_{n})\to \ZZ$ is defined by
\begin{equation*}
\nu_{T}\left(\sum_{\alpha} c_{\alpha}x^{\alpha}\right)= \min\{\langle(a_{1},\ldots,a_{n}), \alpha\rangle\ : \ c_{\alpha}\neq 0\},
\end{equation*}
where $\langle\alpha,\beta\rangle = \sum_{i}\alpha_{i}\beta_{i}$.
\end{definition}

By the McKay correspondence each component $E_{i}$ of the exceptional divisor of the resolution $\varphi$ corresponds to a conjugacy class of symplectic reflections. We fix representatives of conjugacy classes: $T_{i}$ represents the $i$-th class. Let~$r_{i}$ be the order of~$T_{i}$. 
We will write $\nu_{i}$ for $\nu_{T_{i}}$. The next lemma will be useful in relating values of $\nu_{i}$ to the intersection numbers of divisors with curves $C_{i}$.

\begin{lemma}[{\cite[Lem.~2.5]{KaledinMcKay}}]\label{lemma:restricting-monomial-valuation}
$\nu_{i}|_{\CC(V)^{G}} = r_{i}\nu_{E_{i}}$, where $\nu_{E_{i}}$ is the divisorial valuation centered at $E_i$. 
\end{lemma}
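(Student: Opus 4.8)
The plan is to relate the two valuations by testing them on functions pulled back from the quotient $V/G$ and using the structure of the $T_i$-action together with the fact that $V/[G,G] \to V/G$ (and the resolution) is a well-understood morphism in codimension one.

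First I would recall what $\nu_{E_i}$ means: for $f \in \CC(V)^G = \CC(Y)$, $\nu_{E_i}(f)$ is the order of vanishing of $\varphi^*f$ along the prime divisor $E_i \subset X$. Since the property $\nu_i|_{\CC(V)^G} = r_i \nu_{E_i}$ is an equality of valuations on the field $\CC(V)^G$, and both sides are discrete valuations, it suffices to check they agree up to a positive scalar and then pin down the scalar on a single well-chosen function — or, more robustly, to compute both on a local uniformizer. The key geometric input is the McKay correspondence identification: the component $E_i$ corresponds to the conjugacy class of $T_i$, and this correspondence is realized precisely by monomial valuations in the sense of Ito--Reid. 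Concretely, after choosing suitable (formal or \'etale-local) coordinates $x_1,\ldots,x_n$ on $V$ near a generic point of the fixed locus of $T_i$ in which $T_i = \diag(\epsilon_{r_i}^{a_1},\ldots,\epsilon_{r_i}^{a_n})$, the divisorial valuation on $\CC(V)$ extending $\nu_{E_i}$ (up to normalization) is the monomial valuation $\nu_{T_i}$ itself: this is the content of how Kaledin sets up the correspondence, following Ito--Reid. The subtlety is the normalization constant, which is exactly $r_i$, and it arises because the map $\CC(V)^G \hookrightarrow \CC(V)$ is ramified: the monomial valuation $\nu_{T_i}$ on $\CC(V)$ restricts to $\CC(V)^G$ with value group $r_i \ZZ$ rather than $\ZZ$ in general, while $\nu_{E_i}$ is by definition normalized to have value group $\ZZ$ on $\CC(V)^G$. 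Hence $\nu_i|_{\CC(V)^G}$, which takes values in $\ZZ$ a priori but whose image is $r_i \ZZ$, must equal $r_i \nu_{E_i}$.

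The steps I would carry out, in order: (1) reduce to comparing $\nu_{T_i}$ with the divisorial valuation $w_i$ on $\CC(V)$ centered on the exceptional divisor of the canonical partial resolution (or weighted blowup) associated to $T_i$, invoking that $\nu_{T_i} = w_i$ by definition of the monomial/weighted-blowup valuation; (2) identify, via the McKay correspondence (Lemma is cited from \cite{KaledinMcKay}), that the image of this divisor under $X \to V$ (or rather the compatibility of valuation rings) corresponds to $E_i$ on $X$; (3) compute the ramification index of $\CC(V)/\CC(V)^G$ along this valuation and show it equals $r_i$ — this is where the diagonal form $\diag(\epsilon_{r_i}^{a_1},\ldots,\epsilon_{r_i}^{a_n})$ with the $a_j$ and $r_i$ having the appropriate gcd condition (coming from $T_i$ being a symplectic reflection, so two of the $a_j$ are zero and the other two are $a$ and $r_i - a$ with $\gcd(a,r_i)=1$) gives ramification exactly $r_i$; (4) conclude that restriction multiplies values by $r_i$, i.e. $\nu_i|_{\CC(V)^G} = r_i \nu_{E_i}$.

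The main obstacle I expect is step (3) combined with the precise matching in step (2): one must be careful that the valuation $\nu_{T_i}$, which is defined purely combinatorially on a fixed coordinate system for $\CC^n$, genuinely corresponds under the group action and the resolution to the honest divisorial valuation $\nu_{E_i}$ on $X$, and not merely to some valuation with the same center. This requires knowing that the weighted blowup defined by the weights $(a_1,\ldots,a_n)$ extracts a divisor whose strict/total transform to $X$ is $E_i$ — precisely the Ito--Reid / Kaledin dictionary. Since the statement is quoted verbatim as \cite[Lem.~2.5]{KaledinMcKay}, in the paper itself one simply cites it; but to reconstruct the proof honestly, the ramification computation for symplectic reflections (where $T_i$ acts with exactly two nonzero, mutually inverse eigenvalue exponents) is the concrete heart of the matter, and the factor $r_i$ is forced by $|\langle T_i \rangle| = r_i$ acting freely in codimension one on the relevant chart.
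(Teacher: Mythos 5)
This lemma is not proved in the paper at all: it is quoted verbatim from Kaledin, so there is no internal argument to compare against. Your reconstruction is nonetheless a sound outline of the standard Ito--Reid/Kaledin argument, and you correctly isolate the two ingredients: (i) the identification of (the restriction of) the monomial valuation $\nu_i$ with the divisorial valuation of the \emph{specific} component $E_i$, up to scale -- this is precisely the content of the McKay correspondence, which you rightly flag as the deferred hard step (it is checked by the local toric computation transverse to the image of the fixed locus of $T_i$, which under the paper's standing hypotheses is an $A_{r_i-1}$ singularity, cf.\ Remark~\ref{remark:An-type-singularities}); and (ii) the normalization constant $r_i$. For (ii) there is a slightly cleaner route than your ramification-index computation: in coordinates diagonalizing $T_i=\diag(\epsilon_{r_i}^{a_1},\ldots,\epsilon_{r_i}^{a_n})$, every monomial $x^{\alpha}$ is a $T_i$-eigenvector with eigenvalue $\epsilon_{r_i}^{\langle a,\alpha\rangle}$, so a $T_i$-invariant polynomial can only involve monomials with $\langle a,\alpha\rangle\equiv 0\pmod{r_i}$ (distinct eigenvalues cannot cancel); writing any $f\in\CC(V)^{G}$ as a quotient of $G$-invariant polynomials gives $\nu_i(\CC(V)^G)\subseteq r_i\ZZ$ immediately, and since $\nu_{E_i}$ surjects onto $\ZZ$ the proportionality constant is a multiple of $r_i$, pinned down to exactly $r_i$ by exhibiting one invariant of valuation $r_i$ in the transversal $A_{r_i-1}$ chart. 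Your subsidiary claim that the nonzero weights $a, r_i-a$ of a symplectic reflection satisfy $\gcd(a,r_i)=1$ is also correct (the nontrivial eigenvalues are $\zeta,\zeta^{-1}$ with $\zeta$ of order exactly $r_i$), so your ramification route works as well; I see no genuine gap beyond the acknowledged reliance on the cited correspondence.
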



\section{General results}
\label{section:general-proof}

Here we explain our strategy for finding generators of the Cox ring $\Rcal(X)$. First, in~\ref{subsection:candidate-generators} we propose the generating set and introduce the technical assumption on monomial valuations under which Theorem~\ref{theorem:general-theorem} is then proved in~\ref{subsection:proof-of-general-thm}. The proof relies on the relation between monomial valuations and intersections with curves on symplectic resolution, described in~\ref{subsection:valuatins-vs-intersections}.

\begin{remark}
Throughout this section we will always assume that the commutator subgroup $[G,G]$ does not contain symplectic reflections.  This guarantees that orders of symplectic reflections in $G$ are the same as orders of their images in $\Ab(G)$. 
\end{remark}

\begin{remark}\label{remark:An-type-singularities} As we noted in~\ref{subsection:image-of-theta}, the symplectic quotient singularity $V/G$ have codimension $2$ du Val singularities. However, under the assumption that $[G,G]$ does not contain symplectic reflections $V/G$ can have only codimension $2$ singularities of $A_{n}$-type. Indeed, each of those singularities will occur along the image of subspace fixed by some symplectic reflection. Then the symplectic reflections preserving this subspace form naturally a subgroup of $\SL_{2}(\CC)$. The commutator of this subgroup does not contain symplectic reflections if and only if it is trivial, i.e. this subgroup is cyclic.
\end{remark}

\subsection{A candidate for a generating set}
\label{subsection:candidate-generators}
To simplify the notation for the Cox ring of $Y = V/G$ we set $\Pcal =\Rcal(Y) \simeq \CC[V]^{[G,G]}$. 
We look at the $\Cl(Y)$-grading on~$\Pcal$, i.e. the decomposition into eigenspaces of the $\Ab(G)$ action, and fix a finite generating set of $\cP$.

\begin{notation} 
By $\{\phi_{1},\ldots, \phi_{n}\}$ we denote a finite set of generators of $\cP$ consisting of chosen eigenvectors of the $\Ab(G)$ action. 
We think of each $\phi_j$ as of a global section of a reflexive sheaf on $Y$ and denote its divisor of zeroes by~$D_{j}$ and their strict transforms by~$\ovl{D}_j$. 
\end{notation}

We lift elements $\phi_j \in \cP$ to $\cP[t_{1}^{\pm 1},\ldots, t_{m}^{\pm 1}]$ as follows: 
$$\ovl{\phi}_{j} = \phi_{j}\cdot \prod\limits_{i=1}^m t_{i}^{\overline{D}_{j}.C_{i}}.$$ 

The aim of this section is to prove that in some situations elements $\ovl{\phi}_j$ together with certain characters of the Picard torus form a (finite) generating set of $\cR(X)$, see Theorem~\ref{theorem:general-theorem}. However, we need first to describe an additional assumption under which the result holds.

Let $\kappa\colon \CC[w_{1},\ldots, w_{n}]\to \Pcal$ be the surjective ring homomorphism sending $w_{i}\mapsto \phi_{i}$. We will need to compare monomial valuations~$\nu_i$ on~$\cP$ corresponding to symplectic reflections~$T_i$ with certain monomial valuations on $\CC[w_{1},\ldots, w_{n}]$. To define them we lift each $T_i$ to a linear map $\wt{T_i} \colon \bC^n \to \bC^n$
\begin{equation*}
\wt{T}_i(w_{1},\ldots,w_n) = \left(\frac{T_{i}(\phi_{1})}{\phi_{1}} w_{1},\ldots, \frac{T_{i}(\phi_n)}{\phi_n} w_n\right),
\end{equation*}
which is well-defined since $\phi_j$ are eigenvectors of~$\Ab(G)$.

\begin{definition}\label{def-valuation-up}
By~$\wt{\nu}_{i}$ we understand the monomial valuation corresponding to~$\wt{T}_{i}$. 
\end{definition}

Let~$r_i$ be the order of the image of symplectic reflection $T_{i}$, corresponding to exceptional divisor $E_i$, in $G$. Since $[G,G]$ does not contain symplectic reflections, $r_{i}$ is also the order of the class of $T_{i}$ in $\Ab(G)$. Note that $\wt{T}_{i}$ also has order $r_{i}$.

\begin{remark}\label{remark:valuations-of-generators}
Let~$\epsilon_{r_{i}}$ be the $r_i$-th root of unity, fixed in the definition of monomial valuations in section~\ref{subsection:monomial-valuations}. Then $\frac{T_{i}(\phi_{j})}{\phi_{j}} = \epsilon_{r}^{a_{ij}}$ where $0\le a_{ij}< r_{i}$, that is $\wt{\nu}_{i}(w_{j}) = a_{ij}$. By the definition of a monomial valuation we see that $\nu_{i}(\phi_{j}) \ge 0$, $\nu_{i}(\phi_{j}) \equiv a_{ij}\pmod{r_{i}}$. Hence, in particular, $\nu_{i}(\phi_{j})\ge \wt{\nu}_{i}(w_{j})$.
\end{remark}

In the proof of Theorem~\ref{theorem:general-theorem} we assume that every homogeneous $f\in \Pcal$ can be lifted through $\kappa$ compatibly with valuations $\nu_i$ and $\wt{\nu}_i$. More precisely,

\refstepcounter{equation}\label{assumption:lifting-valuations}
\begin{flushleft}
\vspace{0.1cm}
\begin{tabular}{lm{11cm}}
  (\theequation) & \emph{for any homogeneous $f\in \Pcal$ there is $\wt{f}\in \CC[w_{1},\ldots,w_n]$ such that $\kappa(\wt{f}) = f$ and $\nu_{i}(f) \le \wt{\nu}_{i}(\wt{f})$ for all $i$.} \\
\end{tabular}
\vspace{0.1cm}
\end{flushleft}
Note that the other inequality is automatic, it follows directly from the construction of~$\wt{\nu}_i$ and the definition of monomial valuations.

\begin{remark}
The motivation for introducing this condition comes from the embedding of the singularity and its resolution in toric varieties, see~\cite[Sect.~4.2]{MDB} for the surface case. The ring $\CC[w_{1},\ldots,w_n]$ is naturally identified with the Cox ring of the toric quotient $\bC^n/Ab(G)$. This quotient is singular along fixed subspaces of $\wt{T}_i$ and one may construct a partial toric resolution, smooth over generic point of these subspaces. Its Cox ring can be described based on the toric data, and it is expected to have surjective homomorphism to $\cR$. Thus the idea of proving the inclusion $\cR(X) \subseteq \cR$ is, roughly speaking,  as follows: take an element of $\cR(X)$, map it to $\cP$, lift to $\cR(\bC^n/Ab(G)) \simeq \CC[w_{1},\ldots,w_n]$, and use the description of the Cox ring of the partial toric resolution of $\bC^n/Ab(G)$ to show a presentation in terms of generators of $\cR$. Here condition~\eqref{assumption:lifting-valuations} is necessary to ensure that in this process we obtain elements of suitable graded pieces of~$\cR$. However, it turns out that a direct reference to toric arguments can be avoided, see the proof of Proposition~\ref{proposition:strict-transforms}.
\end{remark}

Now we can formulate the main result of this section.

\begin{theorem}\label{theorem:general-theorem}
Suppose that $G$ is such that $[G,G]$ does not contain symplectic reflections and assume that condition~\eqref{assumption:lifting-valuations} holds. Then $\ovl{\Theta}(\cR(X))$ is generated by
\begin{itemize}
\item $t_{1}^{E_{i}.C_{1}}\cdots t_{m}^{E_{i}.C_{m}}$ for $i = 1,\ldots, m$, and 
\item $\ovl{\phi}_{j} = \phi_{j}\cdot \prod\limits_{i=1}^m t_{i}^{\overline{D}_{j}.C_{i}}$ for $j=1,\ldots, n$.
\end{itemize} 
\end{theorem}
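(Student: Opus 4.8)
The plan is to establish two inclusions. The inclusion "$\supseteq$" is the easy direction: each proposed generator already lies in $\ovl{\Theta}(\cR(X))$. Indeed, the exceptional divisor $E_i$ is effective with trivial push-forward $\varphi_*E_i = 0$, so $f_{E_i}$ is (up to scalar) the constant $1 \in \Gamma(X,\Oscr_X(E_i))$, and by Proposition~\ref{proposition:image-via-theta} we get $\ovl{\Theta}(f_{E_i}) = t_1^{E_i.C_1}\cdots t_m^{E_i.C_m}$. Similarly, $\phi_j = f_{D_j} \in \cP = \Rcal(Y)$ pulls back to the section $f_{\ovl{D}_j}$ of $\Oscr_X(\ovl{D}_j)$ (its strict transform is the unique effective divisor in the pulled-back linear system that contains no $E_i$), so $\ovl{\Theta}(f_{\ovl{D}_j}) = \phi_j \cdot \prod_i t_i^{\ovl{D}_j.C_i} = \ovl{\phi}_j$.

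The substance is the inclusion "$\subseteq$". A general homogeneous element of $\ovl{\Theta}(\cR(X))$ has, by Proposition~\ref{proposition:image-via-theta}, the form $g = f_{\varphi_* D}\cdot t_1^{D.C_1}\cdots t_m^{D.C_m}$ for some $D \in \Div(X)$ with $f_D \in \Gamma(X,\Oscr_X(D))$; here $f := f_{\varphi_*D} \in \cP$ is homogeneous with respect to the $\Ab(G)$-action. First I would apply condition~\eqref{assumption:lifting-valuations} to obtain $\wt f \in \CC[w_1,\ldots,w_n]$ with $\kappa(\wt f) = f$ and $\nu_i(f) \le \wt\nu_i(\wt f)$ for all $i$. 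Writing $\wt f$ as a $\CC$-linear combination of monomials $w^\alpha = w_1^{\alpha_1}\cdots w_n^{\alpha_n}$, applying $\kappa$ expresses $f$ as a linear combination of the products $\phi^\alpha = \phi_1^{\alpha_1}\cdots\phi_n^{\alpha_n}$. The key point will be to check that each corresponding monomial in the proposed generators, namely (a suitable monomial in the $t_i^{E_i.\bullet}$) times $\prod_j \ovl\phi_j^{\alpha_j}$, actually lies in the graded piece $\Gamma(X,\Oscr_X(D'))$ for an effective $D'$, i.e. has nonnegative $C_i$-intersection after the correction, so that it is a genuine element of $\cR(X)$ and the linear combination recovers $g$. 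Here is where the valuation bookkeeping enters: by Lemma~\ref{lemma:restricting-monomial-valuation}, $r_i \nu_{E_i} = \nu_i|_{\CC(V)^G}$, and the coefficient $D.C_i$ in the $t$-exponent is governed (via the diagram~\eqref{diagram:class-groups} / Proposition~\ref{prop_diag_class_groups}) by $\nu_{E_i}(\varphi_* D)$; meanwhile $\wt\nu_i(\prod_j w_j^{\alpha_j}) = \sum_j \alpha_j a_{ij}$ controls the $E_i$-order of the pulled-back divisor $\sum_j \alpha_j \ovl{D}_j$. The inequality $\nu_i(f)\le \wt\nu_i(\wt f)$, combined with $\nu_i(f) = r_i \nu_{E_i}(\varphi_*D)$ and Remark~\ref{remark:valuations-of-generators}, is exactly what guarantees that multiplying $\prod_j \ovl\phi_j^{\alpha_j}$ by an appropriate nonnegative power of the generator $t_1^{E_i.C_1}\cdots t_m^{E_i.C_m}$ lands in the correct (effective) graded piece. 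I would package this as a lemma—essentially Proposition~\ref{proposition:strict-transforms} referenced in the excerpt—stating that $f_{\ovl{D}_j}$ and the $f_{E_i}$, together with their inverses' unavailability notwithstanding, suffice to build $f_{\overline{\varphi_*D}}$ times the needed $t$-correction.

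Finally, I would assemble the argument: since $\cR(X)$ is generated by its homogeneous elements $f_D$, and each $\ovl\Theta(f_D)$ has been exhibited as a $\CC$-linear combination of monomials in the proposed generators (with the monomial-by-monomial check above ensuring each monomial is itself in the image), $\ovl\Theta(\cR(X))$ is generated by the listed elements; injectivity of $\ovl\Theta$ (Proposition~\ref{proposition:cox-rings-embedding}) then transports this back to $\cR(X)$ itself.

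I expect the main obstacle to be precisely the monomial-level verification in the middle paragraph—translating the single inequality $\nu_i(f) \le \wt\nu_i(\wt f)$ into the simultaneous statement, for every $i$ and every monomial $w^\alpha$ appearing in a fixed lift $\wt f$, that the $t$-exponent discrepancy between $g$ and $\prod_j \ovl\phi_j^{\alpha_j}$ can be absorbed by nonnegative powers of the $E_i$-characters. One must be careful that the lift $\wt f$ is chosen once and for all (not monomial-by-monomial) and that the bound is uniform over the monomials of $\wt f$, which is why the hypothesis is phrased in terms of $\wt\nu_i(\wt f) = \min_\alpha \wt\nu_i(w^\alpha)$; handling the gap between this minimum and the value on individual monomials, and confirming the resulting divisor is effective rather than merely having the right class, is the delicate accounting. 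Remark~\ref{remark:An-type-singularities} (only $A_n$-singularities in codimension 2) and Remark~\ref{remark:class-group-sublattice} may be needed to keep the intersection-theory identities clean.
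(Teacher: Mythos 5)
Your proposal follows essentially the same route as the paper: the easy inclusion via Proposition~\ref{proposition:image-via-theta}, reduction to strict transforms by writing an effective divisor as $\overline{\varphi_{*}D}+\sum_j a_jE_j$, and then the monomial-level valuation bookkeeping of Proposition~\ref{proposition:strict-transforms}, where the chain $\nu_i(f)\le\wt{\nu}_i(\wt{f})\le\wt{\nu}_i(w^K)\le\nu_i(\phi^K)$ gives non-negativity of the correcting exponents. The one point you flag but leave unresolved --- that these exponents are moreover \emph{integers} --- is settled in the paper by discarding from $\wt{f}$ the monomials of the wrong $\Cl(Y)$-degree (which cannot decrease $\wt{\nu}_i(\wt{f})$), so that $\phi^K/f\in\CC(V)^{G}$ and Lemma~\ref{lemma:restricting-monomial-valuation} applies.
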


The proof requires some preparation and will be given in section~\ref{subsection:proof-of-general-thm}.

\begin{remark}\label{remark:philosophy}
Every effective divisor on $X$ can be written as $\overline{D} + \sum_{i}a_{i}E_{i}$ with $a_{i}$ non-negative integers and $\overline{D}$ being a strict transform of an effective Weil divisor $D$ on $V/G$. In particular the class group $\Cl(X) = \Pic(X)$ is generated by classes of effective divisors $\overline{D}$ together with components $E_{i}$ of the exceptional divisor.

By proposition~\ref{proposition:image-via-theta} elements of the form $t_{1}^{E_{i}.C_{1}}\cdots t_{m}^{E_{i}.C_{m}}$ are images of sections of line bundles $\Oscr_{X}(E_{i})$, and the elements $\phi_{j}\cdot t_{1}^{\overline{D}_{j}.C_{1}}\cdots t_{m}^{\overline{D}_{j}.C_{m}}$ are images of sections of line bundles $\Oscr_{X}(\overline{D}_{j})$. In consequence, the generating set of $\cR(X)$ proposed in Theorem~\ref{theorem:general-theorem} allows us to obtain elements of all graded pieces. 
\end{remark}

Condition~\eqref{assumption:lifting-valuations} is in general not easy to show. In section~\ref{section_lifting_S3} it is verified in the case $G = S_{3}$. In section~\ref{section_D8_lifting} some elements of the proof for $G = \ZZ_{2}\wr S_{2}$ are explained, and a complete argument will be given in~\cite{valuationslift}.

\subsection{Describing intersections in terms of monomial valuations}
\label{subsection:valuatins-vs-intersections}
In the proof of Theorem~\ref{theorem:general-theorem} we will use the language of monomial valuations, see~\ref{subsection:monomial-valuations}.
Here we establish the relation between monomial valuations corresponding to symplectic reflections in $G$ and intersection numbers of divisors on $X$ with curves $C_{1},\ldots, C_{m}$.

Let $D$ be a Weil divisor on $Y$ and $f_{D}\in \Pcal$ be the corresponding section, see Notation~\ref{notation_corresp_section}. We express the intersection numbers $\overline{D}.C_{i}$ by monomial valuations $\nu_{i}(f_{D})$. This is used later to give a different description of proposed generators of~$\ovl{\Theta}(\cR(X))$.

Set $r=\lcm \{r_{1},\ldots, r_{m}\}$. Then $f_{D}^{r}\in \CC[V]^{G}\subset \CC(V)^{G} = \CC(X)$ and we can take the associated principal divisor on $X$: 
\begin{equation*}
0\sim \ddiv_{X} f_{D}^{r} = r\overline{D} + \sum_{i}r\nu_{E_{i}}(f_{D})E_{i}.
\end{equation*}

 Using Lemma~\ref{lemma:restricting-monomial-valuation} and intersecting with $C_{j}$ we get 
\begin{equation}\label{eqn:intersections-vs-valuations}
r\overline{D}.C_{j} = - \sum_{i}\frac{r}{r_{i}}\nu_{i}(f_{D})\cdot(E_{i}.C_{j}).
\end{equation}

\begin{remark}
The intersection matrix $(E_{i}.C_{j})_{i,j}$ is invertible as a direct sum of Cartan matrices. If $U$ is its inverse then we may rewrite~\eqref{eqn:intersections-vs-valuations} as
\begin{equation}\label{eqn:cartan_matrix}
\left(\frac{1}{r_{1}}\nu_{1}(f_{D}),\ldots, \frac{1}{r_{m}}\nu_{m}(f_{D})\right)=- (\ovl{D}.C_{1},\ldots, \ovl{D}.C_{m})\cdot U.
\end{equation}
Hence for the strict transforms of Weil divisors on $V/G$ the information encoded by monomial valuations $\nu_{i}$ and by the intersections with curves $C_{i}$ is the same. 
\end{remark}

\subsection{The image of the Cox ring is generated by indicated elements}
\label{subsection:proof-of-general-thm}
We denote the set consisting of elements listed in Theorem~\ref{theorem:general-theorem} by $\cS$. Let~$\cR$ be the subring of $\Pcal[t_{1}^{\pm 1},\ldots, t_{m}^{\pm 1}]$ generated by $\cS$. 
The nontrivial part of Theorem~\ref{theorem:general-theorem} is to show that elements of $\cS$ are sufficient to generate $\ovl{\Theta}(\Rcal(X))$.

The next proposition is the major step toward proving Theorem~\ref{theorem:general-theorem}. In particular, it implies that $\cS$ is sufficient to generate sections of all line bundles $\Oscr_{X}(\overline{D})$ for Weil divisors~$D$ on~$V/G$, see also Remark~\ref{remark:philosophy}.

\begin{proposition}\label{proposition:strict-transforms}
Assume that condition~\eqref{assumption:lifting-valuations} holds. Let $f = f_{D}\in \Pcal$ be a homogeneous element with zero-divisor~$D$. Then $f_{D}\cdot \prod_i t_{i}^{\overline{D}.C_{i}} \in \Rcal$.
\end{proposition}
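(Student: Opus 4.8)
The plan is to reduce the statement about the element $f_D \cdot \prod_i t_i^{\overline D.C_i}$ to a statement about $\widetilde f$ in the polynomial ring $\CC[w_1,\dots,w_n]$, where a direct monomial computation is available. First I would invoke condition~\eqref{assumption:lifting-valuations} to produce $\widetilde f \in \CC[w_1,\dots,w_n]$ with $\kappa(\widetilde f) = f$ and $\nu_i(f) \le \widetilde\nu_i(\widetilde f)$ for all $i$; since the reverse inequality $\widetilde\nu_i(\widetilde f)\le \nu_i(f)$ is automatic, in fact $\nu_i(f) = \widetilde\nu_i(\widetilde f)$. Writing $\widetilde f = \sum_\alpha c_\alpha w^\alpha$, it then suffices to show that each monomial contribution $\prod_j \phi_j^{\alpha_j} \cdot \prod_i t_i^{\overline D.C_i}$ lies in $\cR$; here one must be careful, because $\overline D$ is the strict transform of the zero-divisor of $f$, not of the monomial $\prod_j\phi_j^{\alpha_j}$. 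So the heart of the argument is to compare $\overline D.C_i$ with $\sum_j \alpha_j (\overline{D_j}.C_i)$, where $D_j$ is the zero-divisor of $\phi_j$.

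The key computation uses the dictionary from~\eqref{eqn:cartan_matrix}: for a homogeneous $g\in\cP$ with zero-divisor $Z(g)$, the vector $(\overline{Z(g)}.C_i)_i$ is, up to the fixed invertible matrix $U$ and the scalars $1/r_i$, the same data as $(\nu_i(g))_i$. Applying this to $g = f$ and to $g = \prod_j \phi_j^{\alpha_j}$ (with $\prod_j\phi_j^{\alpha_j} = \kappa(w^\alpha)$, so that the zero-divisor of this product pulls back the monomial valuation to $\widetilde\nu_i(w^\alpha) = \sum_j\alpha_j a_{ij}$ on the nose), and using additivity of monomial valuations on products of eigenvectors, one gets
\begin{equation*}
\nu_i\Bigl(\prod_j \phi_j^{\alpha_j}\Bigr) = \widetilde\nu_i(w^\alpha) \le \widetilde\nu_i(\widetilde f) = \nu_i(f),
\end{equation*}
the middle inequality holding because $c_\alpha\ne 0$ in $\widetilde f$. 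Translating back through~\eqref{eqn:cartan_matrix} and using that the entries of $-U$ governing the correspondence have a definite sign (the inverse of a direct sum of negative-definite Cartan matrices of $A_n$-type, cf.~Remark~\ref{remark:An-type-singularities}, is nonpositive), the inequality $\nu_i(\prod_j\phi_j^{\alpha_j}) \le \nu_i(f)$ for all $i$ yields $\sum_j\alpha_j(\overline{D_j}.C_i) \ge \overline D.C_i$ for all $i$. Hence the exponent vector of the monomial $\prod_j t_i^{\overline D.C_i}$ differs from $\prod_j t_i^{\sum_j\alpha_j\overline{D_j}.C_i} = \prod_j (\prod_i t_i^{\overline{D_j}.C_i})^{\alpha_j}$ by a monomial with \emph{nonnegative} exponents $b_i := \sum_j\alpha_j(\overline{D_j}.C_i) - \overline D.C_i \ge 0$ in the $t_i$.

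It then remains to see that each such correction monomial $\prod_i t_i^{b_i}$ with $b_i\ge 0$ lies in $\cR$: this follows because $t_1^{E_i.C_1}\cdots t_m^{E_i.C_m}$ are among the listed generators and the matrix $(E_i.C_j)$ is invertible with (again by $A_n$-negative-definiteness) the property that the lattice points with all $b_i\ge 0$ are exactly the nonnegative-integer combinations of its rows — more precisely, one checks that $(b_i)_i = \sum_k c_k (E_k.C_i)_i$ with $c_k$ nonnegative integers; this is where one uses that $V/G$ has only $A_n$-singularities in codimension $2$, so the relevant Cartan matrix rows generate the correct cone. Assembling: $\prod_i t_i^{\overline D.C_i} = \bigl(\prod_j \overline\phi_j^{\alpha_j}\bigr)\cdot\prod_i t_i^{b_i} / \prod_j\phi_j^{\alpha_j}$ shows that $\prod_j\phi_j^{\alpha_j}\cdot\prod_i t_i^{\overline D.C_i} = \prod_j \overline\phi_j^{\alpha_j}\cdot\prod_i t_i^{b_i}\in\cR$, and summing over $\alpha$ with coefficients $c_\alpha$ gives $f_D\cdot\prod_i t_i^{\overline D.C_i}\in\cR$.

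I expect the main obstacle to be the sign/cone bookkeeping in the last two paragraphs: one must verify carefully, using the explicit form of $A_n$-type Cartan matrices and their inverses, both that the valuation inequality $\nu_i(\prod\phi_j^{\alpha_j})\le\nu_i(f)$ transforms into $\sum_j\alpha_j(\overline{D_j}.C_i)\ge\overline D.C_i$ (rather than the reverse), and that a lattice vector with nonnegative $t_i$-exponents really is a nonnegative-integer combination of the generating monomials $t^{E_i\cdot C_\bullet}$. A secondary subtlety is making sure the strict-transform divisor $\overline D$ of $Z(f)$ behaves additively enough under the passage to monomials of the $\phi_j$; this is handled by working entirely with monomial valuations $\nu_i$ (which are genuine valuations, hence additive on products) and only converting to intersection numbers at the end via~\eqref{eqn:cartan_matrix}.
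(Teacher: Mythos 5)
Your overall architecture is the paper's: lift $f$ via condition~\eqref{assumption:lifting-valuations}, expand the lift, and reduce to showing that a correction monomial in the $t_i$ is a nonnegative-integer combination of the generators $t_1^{E_k.C_1}\cdots t_m^{E_k.C_m}$. But the key inequality chain is reversed. Since a monomial valuation of a polynomial is the \emph{minimum} over its monomials, every monomial $w^\alpha$ appearing in $\wt f$ satisfies $\wt\nu_i(w^\alpha)\ge\wt\nu_i(\wt f)$, not $\le$; and by Remark~\ref{remark:valuations-of-generators} one only has $\nu_i(\phi_j)\ge\wt\nu_i(w_j)$, not equality, so $\nu_i\bigl(\prod_j\phi_j^{\alpha_j}\bigr)\ge\wt\nu_i(w^\alpha)$. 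The correct chain is $\nu_i(f)\le\wt\nu_i(\wt f)\le\wt\nu_i(w^\alpha)\le\nu_i(\phi^\alpha)$, the opposite of what you assert. This is not a harmless sign flip: what is actually needed is precisely that the coefficients $a_k=\tfrac{1}{r_k}\bigl(\nu_k(\phi^\alpha)-\nu_k(f)\bigr)$, obtained by applying~\eqref{eqn:cartan_matrix} to both divisors, are nonnegative, and this follows directly from the correctly oriented chain. Your detour through a componentwise inequality on the intersection numbers $\sum_j\alpha_j(\ovl{D_j}.C_i)$ versus $\ovl D.C_i$ also does not go through: the entries of $-U$ are all nonnegative, so componentwise inequalities pass from intersection vectors to valuation vectors, but the inverse matrix (the Cartan matrix itself) has entries of both signs, so they do not pass back in the direction you need.

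The second genuine gap is the final lattice claim. It is false that a $t$-monomial whose exponent vector lies in the correct rational cone is automatically a nonnegative \emph{integer} combination of the rows $(E_k.C_\bullet)$: in the $A_1$ case the only generator is $t^{-2}$ and $t^{-1}$ is not a power of it; in the $A_2$ case the vector $(-1,0)$ lies in the cone spanned by $(-2,1)$ and $(1,-2)$ but only with coefficients in $\tfrac13\ZZ$. So nonnegativity of the $a_k$ is not enough; integrality requires a separate argument. The paper supplies it by using $\Cl(Y)$-homogeneity of $f$ and of the $\phi_j$ to reduce to the case where $\phi^\alpha/f\in\CC(V)^G$ (summands of $\wt f$ mapping to other degrees must cancel and can be discarded without decreasing $\wt\nu_i(\wt f)$), and then applying Lemma~\ref{lemma:restricting-monomial-valuation} to get $a_k=\nu_{E_k}(\phi^\alpha/f)\in\ZZ$. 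Your proposal contains no integrality argument at all. (There is also a small algebra slip: the correction monomial should be $\prod_i t_i^{-b_i}$, not $\prod_i t_i^{b_i}$, in your final identity.)
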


\begin{proof}
Let $M = (\ovl{D}_i.C_j)_{i,j}$ for $i = 1,\ldots,n$, $j = 1,\ldots,m$ be the intersection matrix of strict transforms of divisors corresponding to $\phi_i$ with curves $C_j$. By Proposition~\ref{proposition:image-via-theta} this is the (transposed) matrix of weights of the Picard torus action on $\phi_i$. Also, $I = (i_1,\ldots,i_m) \in \bZ^m$ and $K= (k_1,\ldots,k_n) \in \bZ^n$ will be multiindices.

Take a lift $\wt{f} = \wt{f}_{D}\in \CC[w_{1},\ldots, w_n]$ of $f$ through $\kappa$ compatible with valuations, which exists by condition~\eqref{assumption:lifting-valuations}.
Set $F = \wt{f}(\overline{\phi}_{1}, \ldots, \overline{\phi}_n).$ 

We divide $\wt{f}$ into parts which in $F$ go to elements homogeneous in variables $t_1,\ldots,t_m$. That is, $\wt{f} = \sum_I\ovl{\beta}_I$, where 
$$\ovl{\beta}_I = \sum_{\{K \colon KM = I\}} \alpha_K\cdot w_1^{k_1}\cdots w_n^{k_n}.$$

Let $\beta_I = \kappa(\ovl{\beta}_I) = \ovl{\beta}_I(\phi_{1},\ldots, \phi_n)$. Note that
$\beta_I \prod_j t_{j}^{i_{j}} = \ovl{\beta}_I \left(\ovl{\phi}_{1}, \ldots, \ovl{\phi}_n\right)$
is an element of $\cR$.
Now we may write
$$f\cdot \prod_j t_{j}^{\overline{D}.C_{j}} = \kappa(\wt{f})\cdot \prod_j t_{j}^{\overline{D}.C_{j}} = \left(\sum_I \kappa(\ovl{\beta}_I)\right)\cdot \prod_j t_{j}^{\overline{D}.C_{j}} =\sum_I \left(\beta_I \prod_j t_{j}^{i_{j}}\right)\cdot \prod_j t_{j}^{\ovl{D}.C_j - i_{j}}.$$

To show that the right hand side is indeed in $\cR$ it suffices to prove that for any multiindex $I$ such that $\beta_I \neq 0$ we have $\prod_j t_{j}^{\ovl{D}.C_j - i_{j}} \in \cR$.

We will show how to get non-negative integers $a_1,\ldots,a_m$ such that
$$\prod\limits_{j=1}^m t_{j}^{\ovl{D}.C_{j} - i_{j}}= \left(\prod_{i=1}^nt_{i}^{E_{1}.C_{i}}\right)^{a_{1}}\cdots \left(\prod_{i=1}^nt_{i}^{E_{m}.C_{i}}\right)^{a_{m}}.$$

This will end the proof since factors of the right hand side are in the chosen generating set~$\cS$ of~$\cR$.

This means that $(a_1,\ldots,a_m) = (\ovl{D}.C_{1} - i_{1}, \ldots,\ovl{D}.C_{m} - i_{m})\cdot U$, where $U$ is the inverse of the Cartan matrix for considered singularity. By~\eqref{eqn:cartan_matrix} applied to $\ovl{D}$ we get $(\ovl{D}.C_1,\ldots,\ovl{D}.C_m)\cdot U = -\left(\frac{1}{r_{1}}\nu_{1}(f),\ldots,\frac{1}{r_{m}}\nu_{m}(f)\right)$. 

Take any $K = (k_1,\ldots,k_m)$ such that $KM = I$ and $\alpha_K \neq 0$.
Now~\eqref{eqn:cartan_matrix} applied to $\sum k_i\ovl{D}_i$ gives $I U = KMU = -\left(\frac{1}{r_{i}}\nu_{1}(\phi_1^{k_1}\cdots \phi_n^{k_n}),\ldots,\frac{1}{r_{m}}\nu_{m}(\phi_1^{k_1}\cdots \phi_n^{k_n})\right)$.

That is, we may take $a_j = \frac{1}{r_{j}}(\nu_{j}(\phi_1^{k_1}\cdots \phi_n^{k_n}) - \nu_{j}(f))$, as soon as we show that they are non-negative integers.
For the non-negativity observe that
\begin{equation*}
\nu_{j}(f) \le \wt{\nu}_{j}(\wt{f}) \le \wt{\nu}_{j}(w_{1}^{k_{1}}\cdots w_{n}^{k_{n}}) \le \nu_{j}(\phi_{1}^{k_{1}}\cdots\phi_{n}^{k_{n}}),
\end{equation*}
where the first inequality is a consequence of assumption~\eqref{assumption:lifting-valuations}, the second follows from the definition of monomial valuations and the last one from Remark~\ref{remark:valuations-of-generators}.

It remains to prove that $a_{j}$ are integers. By assumption~$f$ is homogeneous with respect to $\Cl(Y)$-grading, and since $\phi_i$ are also homogeneous we may assume that $\phi_{1}^{k_1}\cdots\phi_{n}^{k_n}$ has the same degree as $f$. Indeed, images of summands $\alpha_{K'}\cdot w_{1}^{k'_1}\cdots w_{n}^{k'_n}$ mapped by $\kappa$ to elements of different degrees must cancel, and removing them from $\wt{f}$ does not decrease $\wt{\nu}_{i}(\wt{f})$. 
But then $\frac{\phi_{1}^{k_1}\cdots \phi_{n}^{k_n}}{f}\in \CC(V)^{G}$ and by Lemma~\ref{lemma:restricting-monomial-valuation}
$$a_{j} = \frac{1}{r_{j}} \nu_{j}\left(\frac{\phi_{1}^{k_1}\cdots \phi_{n}^{k_n}}{f}\right) = \nu_{E_j} \left(\frac{\phi_{1}^{k_1}\cdots \phi_{n}^{k_n}}{f}\right) \in \ZZ.$$

\end{proof}

\begin{proof}[Proof of theorem~\ref{theorem:general-theorem}]
By Proposition~\ref{proposition:image-via-theta} (see also Remark~\ref{remark:philosophy}) we have $\ovl{\Theta}(\cR(X)) \supset \cR$, hence it suffices to show $\ovl{\Theta}(\Rcal(X))\subset \Rcal$.

We will use Proposition~\ref{proposition:strict-transforms}. Since $\ovl{\Theta}$ is a graded ring homomorphism, it is sufficient to show that the images of homogeneous elements via $\ovl{\Theta}$ are all in $\Rcal$. Take $f = f_{D}\in \Rcal(X)$. We have to show that $\ovl{\Theta}(f) = f_{\varphi_{*}D}\cdot \prod_{i}t_{i}^{D.C_{i}}\in \Rcal$. We may write $D = \overline{\varphi_{*}D} + \sum_{j}a_{j}E_{j}$ and we obtain
\begin{equation*}
f_{\varphi_{*}D}\cdot \prod_{i}t_{i}^{D.C_{i}}=f_{\varphi_{*}D}\cdot \prod_{i}t_{i}^{\overline{\varphi_{*}D}.C_{i}}\cdot \prod_{j}\left(\prod_{i}t_{i}^{E_{j}.C_{i}}\right)^{a_{j}}.
\end{equation*}
This is an element of $\cR$, because $f_{\varphi_{*}D}\cdot \prod_{i}t_{i}^{\overline{\varphi_{*}D}.C_{i}}\in \Rcal$ by Proposition~\ref{proposition:strict-transforms} and the remaining factors are expressed in terms of the chosen generating set $\cS$ of~$\Rcal$.
\end{proof}


\section{The symmetric group $S_{3}$}
\label{section:S3}
\subsection{Generators of the Cox ring}\label{section_S3_generators}

In this section we investigate an action of $G = S_{3}$ on $V \simeq \bC^4$ with coordinates $(x_{1},y_{1},x_{2},y_{2})$. We take the direct sum of two copies of the standard two-dimensional representation of $S_3$, which preserves the symplectic form $\omega = dx_{1}\wedge dy_{2} + dy_{1}\wedge dx_{2}$ on $V$. That is, we identify~$G$ with the matrix group generated by
\begin{equation*}
\varepsilon =
\begin{pmatrix}
\epsilon & 0 & 0 & 0\\
0 & \epsilon^{-1} & 0 & 0\\
0 & 0 & \epsilon & 0\\
0 & 0 & 0 & \epsilon^{-1}
\end{pmatrix} \qquad T =
\begin{pmatrix}
0 & 1 & 0 & 0\\
1 & 0 & 0 & 0\\
0 & 0 & 0 & 1\\
0 & 0 & 1 & 0
\end{pmatrix}
\end{equation*}
where $\epsilon$ is a $3$rd root of unity. Let $\nu$ be the monomial valuation $\CC(V)\to \ZZ$ associated with symplectic reflection $T$. The following proposition summarizes basic properties of this representation; all of them are immediate.

\begin{proposition} With the notation above:
\begin{itemize}
\item Commutator subgroup $H = [G,G]$ is a cyclic group of order $3$ generated by $\varepsilon$. In particular $H$ does not contain any symplectic reflection.
\item $\Ab(G) = G/H = \langle T H\rangle \cong \ZZ_{2}$.
\item Symplectic reflections in $G$ are $\varepsilon^{i} T$ for $i =0,1,2$. These elements correspond to transpositions in $S_{3}$ and as such generate~$G$. 
\item All those symplectic reflections are conjugate. In particular, by the McKay correspondence, the symplectic resolution $\varphi\colon X\to Y =V/G$ has irreducible exceptional divisor~$E$.
\item The intersection matrix is the Cartan matrix of type $A_{1}$. In other words, if $C$ is a generic fiber of $\varphi|_{X}:E\to \varphi(E)$, then we have $E.C = -2$.
\item $\nu|_{\CC(V)^{G}} = 2\nu_{E}$ by Lemma~\ref{lemma:restricting-monomial-valuation}.
\end{itemize}
\end{proposition}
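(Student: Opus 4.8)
The statement is a collection of ``immediate'' facts about the explicit $4$-dimensional representation of $S_3$, so the plan is essentially to unwind the definitions. First I would identify the commutator subgroup: conjugating $\varepsilon$ by $T$ sends $\epsilon \mapsto \epsilon^{-1}$ on both coordinate pairs, so $T\varepsilon T^{-1} = \varepsilon^{-1}$, whence $[T,\varepsilon] = \varepsilon^{-2} = \varepsilon$ generates a cyclic group of order $3$. Since $G/\langle\varepsilon\rangle$ is abelian (generated by the image of $T$, which is an involution), we get $H = [G,G] = \langle\varepsilon\rangle \cong \ZZ_3$ and $\Ab(G) = \langle TH\rangle \cong \ZZ_2$. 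A cyclic group of order $3$ contains no element fixing a codimension-$2$ subspace other than the identity (an order-$3$ diagonal matrix with eigenvalues among $\{1,\epsilon,\epsilon^{-1}\}$ that fixes a $2$-plane must be the identity), so $H$ contains no symplectic reflection.

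Next I would locate the symplectic reflections. The elements $\varepsilon^i T$ are the $6$ non-central elements of $G$ outside $H$; each is conjugate in $\GL_4$ to a permutation-type involution with two eigenvalues $1$ and two eigenvalues $-1$ (so it fixes a $2$-dimensional, i.e.\ codimension-$2$, subspace), hence is a symplectic reflection; these correspond to the transpositions in $S_3$ and generate $G$. That all transpositions are conjugate in $S_3$ gives that all six elements $\varepsilon^i T$ form a single conjugacy class, so by the McKay correspondence (Kaledin, \cite{KaledinMcKay}, as recalled in the Proposition on $\Cl(X)$) the exceptional divisor of $\varphi$ is irreducible, say $E$. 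For the intersection number, I invoke Remark~\ref{remark:An-type-singularities}: since $[G,G]$ contains no symplectic reflection, $V/G$ has only $A_n$-type codimension-$2$ singularities, and the subgroup of symplectic reflections fixing a generic point of the fixed locus of $T$ is cyclic of order $2$, giving an $A_1$ singularity; hence the intersection matrix is the $A_1$ Cartan matrix, i.e.\ $E.C = -2$. Finally, $\nu|_{\CC(V)^G} = 2\nu_E$ is immediate from Lemma~\ref{lemma:restricting-monomial-valuation} with $r = \ord(T) = 2$.

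There is no real obstacle here; the only point requiring a word of care is checking that $H$ (and more generally every element of order $3$ in $G$) is genuinely free of symplectic reflections, and that the codimension-$2$ singularity type is exactly $A_1$ rather than something larger — both of which follow from the explicit diagonal/permutation form of the generators together with Remark~\ref{remark:An-type-singularities}. The rest is bookkeeping with the two given matrices.
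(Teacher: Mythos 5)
Your verification is correct and is exactly the routine check the paper has in mind --- the paper itself gives no proof, declaring all six items immediate. Two small slips to fix: there are three (not six) elements $\varepsilon^{i}T$, $i=0,1,2$, outside $H$ (as $|G|=6$ and $|H|=3$); and your parenthetical claim that an order-$3$ diagonal matrix with eigenvalues among $\{1,\epsilon,\epsilon^{-1}\}$ fixing a $2$-plane must be the identity is false in general ($\diag(1,1,\epsilon,\epsilon^{-1})$ is a counterexample) --- the correct reason $H$ contains no symplectic reflection is that $\varepsilon^{\pm 1}$ have eigenvalue multiset $\{\epsilon,\epsilon,\epsilon^{-1},\epsilon^{-1}\}$, hence no eigenvalue $1$ and only the origin as fixed locus.
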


\begin{table}[h!]
\begin{tabular}{c l}
eigenvalue & generators  \\
$1$ 	& $\phi_{1}= x_{1}y_{1},\quad \phi_{2} = x_{2}y_{2},\quad \phi_{3} = x_{1}y_{2} + x_{2}y_{1}$ \\
        & $\phi_{4} = x_{1}^{3} + y_{1}^{3}, \ \phi_{5} = x_{2}^{3} + y_{2}^{3},\ \phi_{6} =x_{1}^{2}x_{2} + y_{1}^{2}y_{2}, \ \phi_{7} = x_{1}x_{2}^{2} + y_{1}y_{2}^{2}$\\
$-1$	& $\phi_{8} = x_{1}y_{2} - x_{2}y_{1}$ \\
        &$\phi_{9} = x_{1}^{3} - y_{1}^{3}, \ \phi_{10} = x_{2}^{3} - y_{2}^{3},\ \phi_{11} = x_{1}^{2}x_{2} - y_{1}^{2}y_{2}, \ \phi_{12} = x_{1}x_{2}^{2} - y_{1}y_{2}^{2}.$
\end{tabular}
\caption{generators of $\CC[V]^{H}$ that are eigenvectors of $\Ab(G)$}
\label{table:generatorsS3}
\end{table}

\begin{proposition}\label{proposition:commutator-invariantsS3}
The elements $\phi_{i}$ in Table~\ref{table:generatorsS3} are eigenvectors of the action of $\Ab(G)$ which generate the ring of invariants $\Pcal = \CC[V]^{H}\subset \CC[V]$.
\end{proposition}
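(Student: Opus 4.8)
The statement to prove is that the twelve explicit polynomials $\phi_1,\ldots,\phi_{12}$ from Table~\ref{table:generatorsS3} are eigenvectors of $\Ab(G)=\langle TH\rangle$ and generate $\CC[V]^H$ where $H=\langle\varepsilon\rangle\cong\ZZ_3$. The eigenvector claim is immediate: $T$ swaps $x_i\leftrightarrow y_i$ simultaneously for $i=1,2$, so each symmetric combination is fixed ($+1$) and each antisymmetric one is negated ($-1$); one just reads this off the formulas.

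For the generation claim, my plan is to compute $\CC[V]^H$ directly. The group $H$ acts diagonally: $\varepsilon$ sends $x_i\mapsto\epsilon x_i$, $y_i\mapsto\epsilon^{-1}y_i$. Hence a monomial $x_1^{a_1}y_1^{b_1}x_2^{a_2}y_2^{b_2}$ is $H$-invariant iff $a_1+a_2-b_1-b_2\equiv 0\pmod 3$. So $\CC[V]^H$ is the semigroup algebra on the submonoid $M\subset\ZZ_{\ge 0}^4$ cut out by this single congruence. The strategy is: (i) find the Hilbert basis (minimal generators) of the affine monoid $M$, i.e. the irreducible lattice points of $M$; (ii) check that each Hilbert basis element, viewed as a monomial, lies in the subalgebra $A$ generated by $\phi_1,\ldots,\phi_{12}$; and then (iii) conclude $A=\CC[V]^H$. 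Step (ii) is where the actual work is: an invariant monomial need not itself be one of the $\phi_i$ (e.g. $x_1y_1$ is $\phi_1$, but $x_1^3$ is not in the list), so one shows that symmetrizing under $T$ — writing $x_1^3 = \tfrac12(\phi_4+\phi_9)$, $x_1^2x_2=\tfrac12(\phi_6+\phi_{11})$, etc. — expresses every needed monomial via the $\phi$'s, using that $\CC[V]^H$ is spanned by $T$-symmetric and $T$-antisymmetric parts and that products $\phi_i\phi_j$ with $j\ge 8$ generate antisymmetric pieces from symmetric ones.

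Concretely I would organize (ii) by degree. In degree $\le 2$ the invariant monomials are $x_1y_1,x_2y_2,x_1y_2,x_2y_1$; the first two are $\phi_1,\phi_2$, and $x_1y_2,x_2y_1$ are recovered from $\phi_3=x_1y_2+x_2y_1$ and $\phi_8=x_1y_2-x_2y_1$. In degree $3$ the invariant monomials are exactly $x_1^3,x_2^3,y_1^3,y_2^3,x_1^2x_2,x_1x_2^2,y_1^2y_2,y_1y_2^2$, and these are precisely $\tfrac12(\phi_4\pm\phi_9)$, $\tfrac12(\phi_5\pm\phi_{10})$, $\tfrac12(\phi_6\pm\phi_{11})$, $\tfrac12(\phi_7\pm\phi_{12})$. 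The key finite check is then that the Hilbert basis of $M$ consists only of exponent vectors of total degree $\le 3$: any lattice point of $M$ of degree $\ge 4$ decomposes as a sum of smaller ones in $M$. This is a short combinatorial argument — given $(a_1,b_1,a_2,b_2)\in M$ with $\sum\ge 4$, one peels off either a degree-$2$ block $x_iy_j$ (when some $a_i>0$ and some $b_j>0$) or, if all the mass is among the $x$'s (resp. $y$'s), a degree-$3$ block $x_i x_j x_k$ with $i+j+k$ chosen so the residue stays right (always possible since three among $\{a_1,a_2\}$ with multiplicity must include a valid triple once the degree is $\ge 3$ and purely in $x$), reducing the degree while staying in $M$.

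The main obstacle is making the Hilbert-basis bound in step (i)/(iii) clean: one must be careful with the "pure" monomials like $x_1^a$ where no $x_iy_j$ block is available, and verify that $x_1^3,x_1^2x_2,x_1x_2^2,x_2^3$ (and the $y$-analogues) really do suffice to generate all pure-$x$ (pure-$y$) invariants of higher degree — equivalently that the $\ZZ_3$-invariant monoid inside $\ZZ_{\ge0}^2$ (generated by $x_1,x_2$ with weights $1,1$ mod $3$) has Hilbert basis $\{x_1^3,x_1^2x_2,x_1x_2^2,x_2^3\}$, a standard Veronese-type computation. Once all Hilbert basis monomials are shown to lie in $A$, we get $\CC[V]^H\subseteq A$; the reverse inclusion is clear since each $\phi_i$ is manifestly $H$-invariant. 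Hence $A=\CC[V]^H=\Pcal$, proving the proposition.
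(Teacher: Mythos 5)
Your proof is correct, but it takes a genuinely different route from the paper: the authors simply state that the generating set was computed with Singular (first computing $\CC[V]^{[G,G]}$, then diagonalizing the $\Ab(G)$-action on the span of the generators), whereas you give a self-contained hand argument. Your key observation — that since $H=\langle\varepsilon\rangle$ acts diagonally, $\CC[V]^H$ is the monoid algebra of $M=\{(a_1,b_1,a_2,b_2)\in\ZZ_{\ge0}^4 : a_1+a_2\equiv b_1+b_2 \pmod 3\}$, whose Hilbert basis consists exactly of the four mixed degree-$2$ monomials and the eight pure degree-$3$ monomials — is right, and your peeling argument for decomposability in degree $\ge 4$ (subtract an $x_iy_j$ block when both an $x$- and a $y$-exponent are positive; otherwise the degree is a multiple of $3$ and at least $6$, so subtract a pure cubic block) is sound. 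Each Hilbert-basis monomial is visibly a linear combination of a symmetric $\phi_i$ ($i\le 7$) and its antisymmetric partner ($i\ge 8$), e.g. $x_1^3=\tfrac12(\phi_4+\phi_9)$, so the subalgebra generated by the $\phi_i$ contains all of $\CC[V]^H$, and the reverse inclusion and the eigenvector claim are immediate from the formulas. The only blemish is the remark that ``products $\phi_i\phi_j$ with $j\ge 8$ generate antisymmetric pieces from symmetric ones,'' which is neither needed nor used — linear combinations suffice, as your concrete degree-by-degree check shows. What your approach buys is a transparent, human-verifiable proof (and an explanation of \emph{why} twelve generators in these degrees suffice); what the paper's approach buys is brevity and uniformity with the harder examples ($\ZZ_2\wr S_2$ and $G_4$) where a comparable hand computation would be considerably more painful.
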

\begin{proof}
We compute this generating set using Singular, \cite{Singular}. First we find the invariants of the action of~$[G,G]$. Then we split their linear span into smaller $\Ab(G)$-invariant subspaces on which the action of $\Ab(G)$ is easy to diagonalize.
\end{proof}

\begin{remark}
We have $\nu(\phi_{i}) = 0$ if $T(\phi_{i}) = \phi$ and $\nu(\phi_{i}) = 1$ if $T(\phi_{i}) = -\phi_{i}$. Since in this case $\nu(\phi_{i}) = D_{i}.C$ by~\eqref{eqn:intersections-vs-valuations}, then we have 
\begin{itemize}
\item $\overline{\phi}_{i} = \phi_{i}$ if $T(\phi) = \phi$,
\item $\overline{\phi}_{i} = \phi_{i}t$ if $T(\phi_{i}) = -\phi_{i}$.
\end{itemize}
\end{remark}

\begin{proposition}\label{proposition:cox-ringS3}
The image $\Theta(\Rcal(X))\subset \Pcal[t^{\pm 1}]$ of the Cox ring of the symplectic resolution $X\to V/G$ is generated by the elements $\phi_{1},\ldots, \phi_{7},\phi_{8}t,\ldots,\phi_{12}t$ and $t^{-2}$.
\end{proposition}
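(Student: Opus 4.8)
The plan is to apply Theorem~\ref{theorem:general-theorem} to the specific case $G = S_3$ acting as above, which requires checking its two hypotheses: that $[G,G]$ contains no symplectic reflections, and that condition~\eqref{assumption:lifting-valuations} holds. The first is immediate from the preceding proposition — $H = [G,G] = \langle\varepsilon\rangle$ is cyclic of order $3$ and contains no symplectic reflection. The verification of condition~\eqref{assumption:lifting-valuations} is deferred to section~\ref{section_lifting_S3}, so for the purposes of this proof I would simply invoke it (or state that it is proven there). Granting both hypotheses, Theorem~\ref{theorem:general-theorem} tells us that $\overline{\Theta}(\cR(X))$ is generated by the elements $t^{E.C_1}\cdots$ — here there is a single exceptional divisor $E$ and a single torus coordinate $t$ — together with the lifts $\overline{\phi}_j$ of the generators $\phi_1,\ldots,\phi_{12}$ of $\cP = \CC[V]^H$ listed in Table~\ref{table:generatorsS3}.

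Next I would unwind what these generators are explicitly. Since the intersection matrix is the $A_1$ Cartan matrix $(E.C) = (-2)$, the first type of generator is $t^{E.C} = t^{-2}$. For the second type, by the Remark preceding the statement we have $\overline{\phi}_i = \phi_i$ when $T(\phi_i) = \phi_i$ (i.e. for $i = 1,\ldots,7$, the eigenvalue-$1$ generators) and $\overline{\phi}_i = \phi_i t$ when $T(\phi_i) = -\phi_i$ (i.e. for $i = 8,\ldots,12$, the eigenvalue-$(-1)$ generators), using that $\nu(\phi_i) = D_i.C$ by~\eqref{eqn:intersections-vs-valuations} and that $\nu(\phi_i) \in \{0,1\}$ according to the eigenvalue. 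This identifies the generating set as exactly $\phi_1,\ldots,\phi_7,\ \phi_8 t,\ldots,\phi_{12}t,\ t^{-2}$, which is the claim.

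One small point to address is that Theorem~\ref{theorem:general-theorem} is stated for $\overline{\Theta}$, whereas the proposition speaks of $\Theta$; by Remark~\ref{remark:class-group-sublattice} (and the discussion following it), in the $S_3$ case we have $\Cl(X) = \ZZ L$, so $\overline{\Theta} = \Theta$ and there is nothing to reconcile. I expect no real obstacle in this proof itself — it is a bookkeeping application of the general theorem — with the understanding that the genuine work, namely verifying condition~\eqref{assumption:lifting-valuations}, is carried out separately in section~\ref{section_lifting_S3}. Thus the proof reduces to: (i) cite the previous proposition for the first hypothesis; (ii) cite section~\ref{section_lifting_S3} for condition~\eqref{assumption:lifting-valuations}; (iii) apply Theorem~\ref{theorem:general-theorem} and substitute the explicit values $E.C = -2$ and the eigenvalue computation of the $\overline{\phi}_i$.
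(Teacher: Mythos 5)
Your proposal is correct and follows exactly the paper's route: the paper's own proof is a one-line reduction to Theorem~\ref{theorem:general-theorem}, with the only nontrivial hypothesis, condition~\eqref{assumption:lifting-valuations}, deferred to section~\ref{section_lifting_S3}. Your additional unwinding of the generators (via $E.C=-2$ and the eigenvalue computation of the $\overline{\phi}_i$) is just the bookkeeping the paper carries out in the remark preceding the proposition.
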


\begin{proof}
To use Theorem~\ref{theorem:general-theorem} we just have to show that condition~\ref{assumption:lifting-valuations} is satisfied. And this is the aim of the next section.
\end{proof}

\subsection{Lifting homogeneous elements}\label{section_lifting_S3}

Here we give an argument for condition~\eqref{assumption:lifting-valuations} in the case of considered representation of~$S_3$. In some sense this is the simplest possible case: there is only one conjugacy class of symplectic reflections, hence when chosing a lifting, we do not have to consider the interplay between different valuations~$\wt{\nu}_i$. However, the fact that there are three conjugate symplectic reflections leads to some complications which do not appear if the conjugacy classes have just two elements, as it happens for $\ZZ_{2}\wr S_{2}$, cf. section~\ref{section_D8_lifting}.

Let $\kappa:\CC[w_{1},\ldots, w_{12}]\to \Pcal$ be the surjective ring homomorphism which assigns $\phi_{i}$ to $w_{i}$. Define a linear map $\wt{T} \colon \bC^{12} \to \bC^{12}$ by setting
\begin{equation*}
\wt{T}(w_{1},\ldots,w_{12}) = (w_{1},\ldots,w_{7},-w_{8},\ldots,-w_{12})
\end{equation*}
and take the corresponding valuation $\wt{\nu}$, as in Definition~\ref{def-valuation-up}.

We start with noticing that valuations $\nu$ and $\wt{\nu}$ correspond to certain ideals generated by linear forms. Take the ideals of subspaces of $\bC^4$ fixed by symplectic reflections in~$G$:
\begin{equation*}
\begin{aligned}
& \cK = I(\ker(T-\id))= (x_{1} - y_{1}, x_{2} - y_{2}), \\
& \cK' = I(\ker(\varepsilon T - \id)) = (x_{1} - \epsilon y_{1},x_{2} - \epsilon y_{2}), \\
& \cK'' = I(\ker(\varepsilon^{-1}T -\id)) = (x_{1} - \epsilon^{-1}y_{1}, x_{2} - \epsilon^{-1}y_{2}). 
\end{aligned}
\end{equation*}
Also, set $\cJ = (w_{8},\ldots,w_{12})\triangleleft \CC[w_{1},\ldots,w_{12}]$.

\begin{lemma}\label{lemma:valuations-vs-ideals-S3}
The valuation $\nu$ is the valuation of the ideal $\cK$, that is $\nu(f) = d$ is equivalent to $f\in \cK^{d}$ and $f\not\in \cK^{d+1}$. The analogous statement is true for the valuation $\wt{\nu}$ and the ideal $\cJ$.
\end{lemma}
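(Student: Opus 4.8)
The plan is to reduce both assertions to one elementary observation: a monomial valuation all of whose coordinate weights lie in $\{0,1\}$ is simply the adic valuation of the monomial ideal generated by the weight-one coordinates. For $\wt{\nu}$ this is immediate, because $\wt{T}=\diag(1,\dots,1,-1,\dots,-1)$ is already diagonal in the coordinates $w_1,\dots,w_{12}$ used to define $\wt{\nu}$, with eigenvalue $-1$ occurring exactly in slots $8,\dots,12$. So, taking $r=2$ and $\epsilon_2=-1$ in the definition of a monomial valuation, $\wt{\nu}(\wt f)$ is the minimal total degree in $w_8,\dots,w_{12}$ of a monomial occurring in $\wt f$. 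Since $\cJ^d=(w_8,\dots,w_{12})^d$ is spanned by the monomials of $w_8,\dots,w_{12}$-degree $\ge d$, we get $\wt f\in\cJ^d$ iff every monomial of $\wt f$ has such degree $\ge d$, i.e. iff $\wt{\nu}(\wt f)\ge d$; hence $\wt{\nu}(\wt f)=d$ iff $\wt f\in\cJ^d\setminus\cJ^{d+1}$.

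For $\nu$ the extra ingredient is that $T$ is not diagonal in the coordinates $(x_1,y_1,x_2,y_2)$, so I first pass to coordinates diagonalizing it: set $u_i=x_i+y_i$ and $v_i=x_i-y_i$ for $i=1,2$. Then $T$ fixes $u_1,u_2$ and negates $v_1,v_2$, so in these coordinates $T=\diag(1,-1,1,-1)$, and by definition $\nu$ is the corresponding monomial valuation, which (again with $r=2$, $\epsilon_2=-1$) records the minimal total degree in $v_1,v_2$. Since $\ker(T-\id)=\{v_1=v_2=0\}$, we have $\cK=(x_1-y_1,x_2-y_2)=(v_1,v_2)$ as an ideal of $\bC[V]=\bC[u_1,v_1,u_2,v_2]$, and the same monomial-ideal bookkeeping as for $\cJ$ gives $f\in\cK^d$ iff $\nu(f)\ge d$, hence $\nu(f)=d$ iff $f\in\cK^d\setminus\cK^{d+1}$.

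There is no real obstacle here; the only point deserving a word of care is that the definition of $\nu$ requires a choice of coordinates diagonalizing $T$, and one should check the resulting valuation is independent of that choice. This holds because changing the basis of the $(-1)$-eigenspace turns a monomial of $v$-degree $d$ into a homogeneous polynomial of degree $d$ in the new variables while keeping $u$-monomials of $v$-degree $0$, so the ``minimal $v$-degree'' functional is unchanged; the same remark shows the statement is insensitive to the particular generators chosen for $\cK$. Everything else is a routine unwinding of definitions.
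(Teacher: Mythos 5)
Your proof is correct and follows essentially the same route as the paper: diagonalize $T$ (the paper uses $T=\diag(-1,-1,1,1)$, you use $\diag(1,-1,1,-1)$ in the coordinates $u_i=x_i+y_i$, $v_i=x_i-y_i$), identify $\nu$ as the minimal degree in the $(-1)$-eigencoordinates, and observe that $\cK^d$ is exactly the set of polynomials all of whose monomials have that degree at least $d$; the $\wt{\nu}$ case is the already-diagonal instance of the same observation. Your extra remark on independence of the choice of diagonalizing basis is a reasonable point of care that the paper omits.
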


\begin{proof}
After a linear change of coordinates $T = \diag(-1,-1,1,1)$ we may write $\cK = (x_{1},y_{1})$. Then both statements $\nu(f) \ge d$ and $f\in \cK^{d}$ are equivalent to the fact that each monomial of $f$ is of degree at least $d$ with respect to $x_{1},y_{1}$.
Similar argument proves the statement concerning~$\wt{\nu}$.
\end{proof}

Recall that $\cP\simeq \cR(Y)$ is graded by $\Cl(Y) = G^{\vee} \simeq \Ab(G)^{\vee}$. We will consider elements homogeneous with respect to this grading.

\begin{lemma}\label{lemma:valuations-vs-ideals-S3-II}
If $f \in \cP$ is homogeneous with respect to $\Cl(Y)$-grading and $f \in \cK^d$ then $f\in \cK^{d}\cap (\cK')^{d}\cap (\cK'')^{d}$.
\end{lemma}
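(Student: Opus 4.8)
The plan is to exploit the $S_3$-symmetry of the situation. The three ideals $\cK$, $\cK'$, $\cK''$ are the ideals of the fixed subspaces of the three conjugate symplectic reflections $T$, $\varepsilon T$, $\varepsilon^{-1}T$, and these three reflections are permuted (up to the order-three subgroup $H = \langle\varepsilon\rangle$) by conjugation. More precisely, since $\varepsilon T \varepsilon^{-1} = \varepsilon^2 T$ and iterating cycles through $\varepsilon^i T$, the group $H$ acts on $V$ permuting the three fixed subspaces cyclically, hence permuting the three ideals $\cK, \cK', \cK''$ cyclically. Therefore $\varepsilon$ sends $\cK^d$ onto $(\cK')^d$ and $(\cK')^d$ onto $(\cK'')^d$ (with a suitable labelling). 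So the first step is: record that $\varepsilon\cdot\cK = \cK'$, $\varepsilon\cdot\cK' = \cK''$, $\varepsilon\cdot\cK'' = \cK$ as ideals, which is a direct check on the listed generators, e.g. $\varepsilon\cdot(x_1-y_1) = \epsilon x_1 - \epsilon^{-1} y_1 = \epsilon(x_1 - \epsilon^{-2}y_1) = \epsilon(x_1-\epsilon y_1)$, so $\varepsilon$ maps the generator of $\cK$ to a scalar times a generator of $\cK'$.

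The second step is to use that $f \in \cP = \CC[V]^{H}$, so $f$ is $\varepsilon$-invariant. From $f\in\cK^d$ and the $\varepsilon$-invariance we get $f = \varepsilon\cdot f \in \varepsilon\cdot\cK^d = (\cK')^d$, and applying $\varepsilon$ once more $f = \varepsilon^2\cdot f \in \varepsilon^2\cdot\cK^d = (\cK'')^d$. Hence $f \in \cK^d \cap (\cK')^d \cap (\cK'')^d$, which is exactly the claim. Notice that here the homogeneity hypothesis with respect to the $\Cl(Y)$-grading is in fact \emph{not} needed for this particular deduction — $H$-invariance alone suffices — but I would keep the hypothesis in the statement as it is the natural setting and is what gets used downstream; alternatively one could remark that $\cP$ consists precisely of $H$-invariants so every element (homogeneous or not) is $\varepsilon$-invariant.

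I do not expect a genuine obstacle here; the only point requiring care is the bookkeeping of step one — verifying that conjugation/the linear action of $\varepsilon$ genuinely carries the ideal $\cK$ to $\cK'$ and not to some other ideal, and tracking that the cyclic action closes up after three steps. This is routine: it amounts to checking that $\ker(\varepsilon(T)\varepsilon^{-1} - \id) = \varepsilon\cdot\ker(T-\id)$, which is the general fact that $g$ carries the fixed space of $h$ to the fixed space of $ghg^{-1}$, together with the identification of $\varepsilon T\varepsilon^{-1}$, $\varepsilon^2 T\varepsilon^{-2}$ with $\varepsilon^2 T$, $\varepsilon T$ respectively. Once that is in place the proof is two lines.
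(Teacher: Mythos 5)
Your proof is correct, but it takes a slightly different route from the paper's. The paper conjugates by an \emph{arbitrary} element $g\in G$ with $gTg^{-1}=\varepsilon T$, so that $f\in\cK^d$ gives $gf\in(\cK')^d$, and then invokes the homogeneity of $f$ to conclude $gf=\chi(g)f$ for a character $\chi\in G^{\vee}$, hence $f\in(\cK')^d$. You instead observe that the conjugating element can be chosen inside $H=\langle\varepsilon\rangle$ itself (conjugation by $\varepsilon$ cycles $T\mapsto\varepsilon^{2}T\mapsto\varepsilon T\mapsto T$, hence cycles $\cK,\cK',\cK''$), so that plain $H$-invariance of $f\in\cP=\CC[V]^{H}$ already gives $\varepsilon f=f$ and the homogeneity hypothesis becomes superfluous. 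Your observation that the hypothesis can be dropped is genuine and correct for this lemma. What the paper's formulation buys is uniformity: in the general setting (and in the parallel Lemma for $\ZZ_2\wr S_2$) the symplectic reflections in a conjugacy class need not be conjugate by elements of $[G,G]$, so the character argument via homogeneity is the one that transfers; your argument exploits the special feature of $S_3$ that the reflection class is a single $H$-coset permuted transitively by $H$-conjugation. Your bookkeeping in step one is fine — the only convention-dependent point is whether $\varepsilon$ sends $\cK$ to $\cK'$ or to $\cK''$ (depending on whether the action on functions is $f\mapsto f\circ\varepsilon$ or $f\mapsto f\circ\varepsilon^{-1}$), but either way it is a $3$-cycle on the three ideals and the conclusion $f\in\cK^{d}\cap(\cK')^{d}\cap(\cK'')^{d}$ follows.
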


\begin{proof}
Symplectic reflections $T$, $\varepsilon T$ are conjugate, i.e. $\varepsilon T = gTg^{-1}$ for some $g\in G$. Thus $f\in \cK^{d} = I(\ker(T-\id))^{d}$ implies that $gf\in I(\ker (g(T-\id)g^{-1}))^{d} = (\cK')^{d}$. Since~$f$ is homogeneous, $G$ acts on $f$ by multiplication by some character $\chi \in G^{\vee}$. In particular, $gf = \chi(g) f$, where $\chi(g) \in \CC^{*}$. Hence $f\in (\cK')^{d}$, and in the same way $f\in (\cK'')^{d}$.
\end{proof}

\begin{remark}\label{lemma:lifting-valuationsS3}
We are going to prove condition~\eqref{assumption:lifting-valuations}, which in this case can be stated as follows: for each homogeneous $f\in \Pcal$ there exists $\wt{f}\in \CC[w_{1},\ldots,w_{12}]$ such that $\kappa(\wt{f}) = f$ and $\nu(f) = \wt{\nu}(\wt{f})$. 

Let~$\cI$ be the kernel of $\kappa$. By Lemma~\ref{lemma:valuations-vs-ideals-S3-II}, in terms of corresponding ideals this condition can be expressed as
\begin{equation}\label{equation-valuation-lifting}
\kappa^{-1}(\cK^{d}\cap (\cK')^{d}\cap (\cK'')^{d}) \subseteq \cJ^{d} + \cI.
\end{equation}
\end{remark}

The remaining part of this section is a proof of~\eqref{equation-valuation-lifting}. We start from a technical argument that in this case instead of considering intersection of powers of ideals we may look at powers of intersections.

\begin{lemma}\label{lemma:powers-and-intersections-S3}
For $d \in \bN$ we have $\cK^{d}\cap (\cK')^{d}\cap (\cK'')^{d} = (\cK\cap \cK'\cap \cK'')^{d}$.
\end{lemma}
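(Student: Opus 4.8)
The plan is to prove the nontrivial inclusion $\cK^{d}\cap (\cK')^{d}\cap (\cK'')^{d} \subseteq (\cK\cap \cK'\cap \cK'')^{d}$, since the reverse inclusion is immediate. First I would pass to the coordinates in which $T = \diag(-1,-1,1,1)$, so that $\cK = (x_1,y_1)$ becomes a monomial ideal generated by a \emph{regular sequence} of two of the four variables. The key structural fact to exploit is that $\cK$, $\cK'$, $\cK''$ are three ideals each generated by two of the linear forms $x_i - \zeta y_i$ ($\zeta \in \{1,\epsilon,\epsilon^{-1}\}$), and the three forms $x_1 - y_1,\ x_1-\epsilon y_1,\ x_1 - \epsilon^{-1}y_1$ (resp. with index $2$) are pairwise ``coprime'' in the sense that any two of them generate the unit ideal modulo the other variables — more precisely $(x_1 - \zeta_1 y_1, x_1 - \zeta_2 y_1) = (x_1, y_1)$ for distinct roots. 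So the three ideals are pairwise comaximal in each ``block'' of variables.

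The cleanest route is to reduce to a two-variable computation. After a change of variables the ambient ring is $\CC[x_1,y_1,x_2,y_2]$ and, up to renaming, $\cK \cap \cK' \cap \cK''$ is the product of its ``$(x_1,y_1)$-part'' and ``$(x_2,y_2)$-part''; concretely I expect $\cK\cap\cK'\cap\cK'' = \fra \cdot \frb$ where $\fra \subset \CC[x_1,y_1]$ and $\frb \subset \CC[x_2,y_2]$ are each the intersection of three maximal-type ideals of linear forms. So it suffices to prove the statement for a polynomial ring $R = \CC[u,v]$ in two variables and three ideals $\fra_j = (u - \zeta_j v, \mathfrak{?})$ — actually each $\cK$ restricted here is just $(u,v) = \mathfrak{m}$, the maximal ideal, because $\cK = (x_1 - y_1, x_2 - y_2)$ mixes the two blocks. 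Let me instead organise it as: $\cK\cap\cK'\cap\cK''$ lives in $\CC[x_1,y_1,x_2,y_2]$, and I want $\cK^d \cap (\cK')^d \cap (\cK'')^d = (\cK\cap\cK'\cap\cK'')^d$. The principle I would invoke is that for ideals that are \emph{pairwise comaximal after saturation}, or more robustly, for the specific monomial/linear situation at hand, powers distribute over intersections.

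The technically safe argument I would actually write: localize at the relevant prime. The only height-$2$ prime containing $\cK$ that could matter is $\cK$ itself (after the coordinate change it is prime, indeed $(x_1,y_1)$), and similarly for $\cK'$, $\cK''$; these three primes are distinct. The containment of ideals can be checked after localizing at each associated prime of the two sides. Localizing at $\cK$: then $(\cK')_\cK = (\cK'')_\cK = R_\cK$ since $\cK' \not\subseteq \cK$ (distinct primes of the same height), so the left side becomes $(\cK^d)_\cK = (\cK_\cK)^d$, and $(\cK\cap\cK'\cap\cK'')_\cK = \cK_\cK$, so the right side is also $(\cK_\cK)^d$ — equal. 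At any prime containing none of the three, or containing two of them (impossible here as they are distinct height-$2$ primes with pairwise sum of height $\geq 3$... need to check the sum is the irrelevant ideal or at least has height $\geq 3$), both sides localize to the same thing. Since both $\cK^d\cap(\cK')^d\cap(\cK'')^d$ and $(\cK\cap\cK'\cap\cK'')^d$ are intersections of primary ideals whose radicals lie among $\{\cK,\cK',\cK''\}$ (and possibly the irrelevant ideal, which I must handle: a power of $\fra\fra'\fra''$ type has embedded irrelevant component — this is the subtle point), agreement at all these primes gives equality. The main obstacle is precisely controlling the \emph{embedded primary components} at the irrelevant maximal ideal $\mathfrak{m}_0 = (x_1,y_1,x_2,y_2)$: intersections of powers of these linear ideals need not be integrally closed and the naive ``check at minimal primes'' fails, so I would need the sharper input that, in the two-variable reduction, $\mathfrak{m}^d \cap (\fra')^d \cap (\fra'')^d = (\mathfrak{m}\cap\fra'\cap\fra'')^d$ by an explicit monomial/degree argument, and then tensor the two blocks together using flatness of $\CC[x_2,y_2]$ over $\CC$. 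I expect the honest proof to reduce, after the coordinate change, to: writing any $f$ in the left side in block-bihomogeneous pieces and using that three pairwise-coprime linear forms in one variable-pair force a product factorization, which is a short direct computation — that is the step I would flag as the real content, everything else being localization bookkeeping.
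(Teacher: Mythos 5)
Your proposal has a genuine gap, and you essentially flag it yourself. The localization bookkeeping is sound as far as it goes: the inclusion $(\cK\cap \cK'\cap \cK'')^{d}\subseteq \cK^{d}\cap(\cK')^{d}\cap(\cK'')^{d}$ is trivial, the three ideals are distinct height-two primes generated by regular sequences (so each $\cK^{d}$ is primary and the left-hand side is unmixed with associated primes $\cK,\cK',\cK''$), and localizing at each of these primes gives equality. But since $\cK+\cK'=(x_1,y_1,x_2,y_2)$, this reduces the lemma exactly to its only hard point: showing that $(\cK\cap \cK'\cap \cK'')^{d}$ has no embedded component at the irrelevant maximal ideal, equivalently verifying the containment there directly. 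You do not prove this; you defer it to ``a short direct computation'' that you flag as the real content. Worse, the device you propose for carrying it out --- reduce to two variables and ``tensor the two blocks together using flatness'' --- is unavailable, as you yourself notice mid-argument: each of $\cK=(x_1-y_1,x_2-y_2)$, $\cK'$, $\cK''$ mixes the two coordinate pairs, so $\cK\cap\cK'\cap\cK''$ is not a product $\mathfrak{a}\cdot\mathfrak{b}$ of ideals in disjoint sets of variables, and there is no two-variable statement to tensor. So the proposal is a correct reduction followed by an appeal to a step that is both unproved and, in the form you sketch it, based on a false factorization.

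For comparison, the paper attacks that step head-on. It normalizes coordinates so that $\cK=(z_1,z_2)$, $\cK'=(z_3,z_4)$, $\cK''=(z_1+z_3,z_2+z_4)$, records the identity $\cK\cap\cK'\cap\cK''=(z_1z_4-z_2z_3)+\cK\cdot\cK'\cdot\cK''$, and then runs an induction on $d$ with an inner descending induction on monomial supports: any homogeneous element of $\cK^{d}\cap(\cK'\cK'')^{d}$ is rewritten, by repeatedly subtracting explicit multiples of $z_1z_4-z_2z_3$, as $(z_1z_4-z_2z_3)\beta+\gamma$ with $\gamma\in(\cK\cap\cK'\cap\cK'')^{d}$; the monomial valuations of $\cK,\cK',\cK''$ then force $\beta$ into $\cK^{d-1}\cap(\cK')^{d-1}\cap(\cK'')^{d-1}$ and the outer induction closes. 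Some argument of this kind (or, alternatively, a proof that $(\cK\cap\cK'\cap\cK'')^{d}$ is unmixed for all $d$) is precisely what your write-up is missing, and it is not a short computation.
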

\begin{proof}
After a linear change of coordinates we may assume that two of considered ideals are monomial: $\cK = (z_1, z_2)$, $\cK' = (z_3, z_4)$ and $\cK'' = (z_1+z_3, z_2+z_4)$ in $\bC[z_1, z_2, z_3, z_4]$.
Also, one checks directly (e.g. in Macaulay2, \cite{M2}) that $$\cK \cap \cK' \cap \cK'' = (z_1z_4-z_2z_3) + \cK \cdot \cK' \cdot \cK''.$$

Obviously the right-hand side is contained in the left-hand side. We prove the other inclusion. The proof goes by induction on $d$. For $d = 1$ the result is clear. We assume that $\cK^{d-1}\cap (\cK')^{d-1} \cap (\cK'')^{d-1} = (\cK \cap \cK' \cap \cK'')^{d-1}$ and prove the inclusion in
$$\cK^d\cap (\cK')^d \cap (\cK'')^d \subseteq (\cK^{d-1}\cap (\cK')^{d-1} \cap (\cK'')^{d-1})\cdot(\cK\cap \cK' \cap \cK'') = (\cK \cap \cK' \cap \cK'')^d.$$

Note that $(\cK'\cdot \cK'')^d = (\cK')^d \cap (\cK'')^d$, since these ideals are generated by linearly independent forms (i.e. after a linear change of coordinates they become monomial, generated in independent variables). Hence the left-hand side can be written as $\cK^d \cap (\cK'\cdot \cK'')^d$.
Consider an element~$\alpha$ of the left-hand side. We express it in terms of generators of $(\cK'\cdot \cK'')^d$ and then explore the information that it belongs to~$\cK^d$:
$$\alpha = \sum_{i\in I} C_i(z_1+z_3)^{a_i}(z_2+z_4)^{d-a_i}z_3^{b_i}z_4^{d-b_i}w_i,$$
where $w_i$ are monomials, $C_i$ are constants and $I$ is a set of indices. Assume that~$\alpha$ is homogeneous (with respect to standard degree) -- we can work in each degree separately. 

Now we show by induction that $\alpha$ can be written as a sum of element divisible by $z_1z_4-z_2z_3$ and an element of $(\cK \cap \cK' \cap \cK'')^d$.
We use the (descending) induction on the minimal pair $(m_i,n_i)$ such that $m_i$ is the degree of $w_i$ in $z_1$ and $z_2$ (i.e. sum of the degree in $z_1$ and in $z_2$) and $n_i$ is the degree of $w_i$ in $z_1$, ordered lexicographically.
We may assume that $\alpha$ contains components for which $(m_i,n_i) < (d,0)$ (i.e. $w_i$ is not divisible by any $z_1^cz_2^{d-c}$), since otherwise $\alpha \in (\cK \cap \cK' \cap \cK'')^d$.

Let $J \subseteq I$ be the set of indices with minimal $(m_i,n_i)$, equal to $(m,n)$. Look at
$$\alpha' = z_1^nz_2^{m-n}\sum_{i\in J} C_i (z_1+z_3)^{a_i}(z_2+z_4)^{d-a_i}z_3^{b_i}z_4^{d-b_i}z_3^{e_i}z_4^{f-e_i},$$
where $f$ is the same for all terms because of the homogeneity. It is enough to show that $\alpha' \in (z_1z_4-z_2z_3)$, the rest follows from the induction hypothesis for $\alpha - \alpha'$.

We do another (decreasing) induction on the minimal value of $a_i$ for $i \in J$. Let $a_0$ be this minimal value, assume first that $a_0 < d$.
Then for every component of the sum above with $a_i = a_0$ we modify $\alpha'$ to $\alpha'- (S_0 - S_1)$, where $S_0$ is the considered component (with exponents $b_0$ and $e_0$) and
$$S_1 = C_{i}(z_1+z_3)^{a_0+1}(z_2+z_4)^{d-a_0-1}z_3^{b_0+e_0-1}z_4^{d+f+1-b_0-e_0}.$$
We may assume that $b_0 + e_0 > 0$, because otherwise we have a monomial in $\alpha'$ which is not in $\cK^d$, but cannot cancel with any other: this is $c_0z_1^nz_2^{m-n}z_3^{a_0}z_4^{2d+f-a_0}$ (obtained by choosing always $z_3$ and $z_4$ from brackets).

Note that every time we subtract from $\alpha'$ a polynomial divisible by $z_1z_4-z_2z_3$:
$$S_0-S_1 = C_{i}(z_1+z_3)^{a_0}(z_2+z_4)^{d-a_0-1}z_3^{b_0 + e_0 - 1}z_4^{d+f-b_0-e_0}((z_2+z_4)z_3-(z_1+z_3)z_4).$$
Also, since $S_1$ is divisible by $(z_1+z_3)^{a_0+1}$, after all these modifications we have increased the minimal value of $a_i$. 

Finally, if $a_0 = d$ then the monomial obtained by choosing $z_3$ in every bracket in the corresponding sum element (remember that there is no $(z_2+z_4)$) in the component where $b_0+e_0$ is maximal is not in $\cK^d$ and it cannot delete with any other monomial.

Summing up, we can write $\alpha'$, hence also $\alpha$, in the form $(z_1z_4-z_2z_3)\beta + \gamma$, where $\gamma \in (\cK \cap \cK' \cap \cK'')^d$. Computing monomial valuations corresponding to $\cK, \cK', \cK''$ we get $\beta \in \cK^{d-1}$, $\beta \in (\cK')^{d-1}$, $\beta \in (\cK'')^{d-1}$, which ends the outer induction.

\end{proof}

\begin{lemma}\label{lemma:intersection-gens-S3}
The intersection $\cK\cap \cK'\cap \cK''$ is generated by the following polynomials
\begin{equation*}
{y}_{1} {x}_{2}-{x}_{1} {y}_{2}, \ {x}_{2}^{3}-{y}_{2}^{3}, \ {x}_{1} {x}_{2}^{2}-{y}_{1} {y}_{2}^{2}, \    {x}_{1}^{2} {x}_{2}-{y}_{1}^{2} {y}_{2}, \ {x}_{1}^{3}-{y}_{1}^{3}.
\end{equation*}

which are elements of~$\Pcal$. Moreover, we have
\begin{equation*}
(\cK\cap \cK'\cap \cK'')^{d}\cap \Pcal = (\cK\cap \cK'\cap \cK''\cap \Pcal)^{d}.
\end{equation*}
\end{lemma}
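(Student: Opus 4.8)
The plan is to treat the two assertions separately, deriving the second from the first. I begin with the first. The five polynomials listed are, up to sign and reordering, precisely the eigenvectors $\phi_{8},\dots,\phi_{12}$ of eigenvalue $-1$ from Table~\ref{table:generatorsS3}, so they belong to $\Pcal=\CC[V]^{H}$ by Proposition~\ref{proposition:commutator-invariantsS3}. That they lie in $\cL:=\cK\cap\cK'\cap\cK''$ is a direct check: for $\phi_{9}=x_{1}^{3}-y_{1}^{3}$ and $\phi_{10}=x_{2}^{3}-y_{2}^{3}$ one uses the factorisation $x_{i}^{3}-y_{i}^{3}=(x_{i}-y_{i})(x_{i}-\epsilon y_{i})(x_{i}-\epsilon^{-1}y_{i})$, which puts one factor in each of $\cK,\cK',\cK''$; for $\phi_{8},\phi_{11},\phi_{12}$ one substitutes $x_{i}\mapsto\zeta y_{i}$ with $\zeta^{3}=1$ and observes that the value is $0$. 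Hence $(\phi_{8},\dots,\phi_{12})\subseteq\cL$.

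For the reverse inclusion I would start from the identity $\cL=(z_{1}z_{4}-z_{2}z_{3})+\cK\cdot\cK'\cdot\cK''$ established inside the proof of Lemma~\ref{lemma:powers-and-intersections-S3}. Since $\cK\cK'\cK''$, being a product of three ideals generated in degree $1$, contains no nonzero element of degree $<3$, the degree-$2$ part of $\cL$ is one–dimensional and spanned by $z_{1}z_{4}-z_{2}z_{3}$; as $\phi_{8}$ is a nonzero quadric in $\cL$, it must be a scalar multiple of that generator, so $\cL=(\phi_{8})+\cK\cK'\cK''$. Now $\cK\cK'\cK''$ is generated by the eight cubics $\ell\ell'\ell''$ with $\ell$ a generator of $\cK$, $\ell'$ of $\cK'$, $\ell''$ of $\cK''$; two of them are $\phi_{9}$ and $\phi_{10}$, and for the remaining six it suffices to show membership in $(\phi_{8},\dots,\phi_{12})$. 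Since these six cubics lie in $\cL_{3}$, this reduces to the statement that $(\phi_{8},\dots,\phi_{12})$ and $\cL$ coincide in degree $3$: the space $\phi_{8}\cdot\langle x_{1},y_{1},x_{2},y_{2}\rangle\oplus\CC\phi_{9}\oplus\CC\phi_{10}\oplus\CC\phi_{11}\oplus\CC\phi_{12}$ is readily seen to be $8$-dimensional, while $\dim\cL_{3}=20-3\cdot 4=8$ because the three $2$-planes impose independent conditions on cubics, and an inclusion of equidimensional spaces is an equality. (Equivalently, the whole first assertion is a short Macaulay2 computation, in the spirit of the proof of Lemma~\ref{lemma:powers-and-intersections-S3}.) This yields $\cL=(\phi_{8},\dots,\phi_{12})$.

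The second identity then follows formally. By the first part, $\cL$ is generated as an ideal of $\CC[V]$ by the elements $\phi_{8},\dots,\phi_{12}\in\Pcal$, so for every $d$ we have $\cL^{d}=\sum_{|K|=d}\phi^{K}\CC[V]$ with $\phi^{K}:=\phi_{8}^{k_{8}}\cdots\phi_{12}^{k_{12}}\in\Pcal$ for $|K|=d$. Let $R\colon\CC[V]\to\Pcal$ be the Reynolds operator $R(g)=\frac{1}{|H|}\sum_{\sigma\in H}\sigma(g)$, which is $\Pcal$-linear because $\Pcal=\CC[V]^{H}$. Given $f\in\cL^{d}\cap\Pcal$, write $f=\sum_{|K|=d}\phi^{K}g_{K}$ with $g_{K}\in\CC[V]$; applying $R$ and using $R(f)=f$ gives $f=\sum_{|K|=d}\phi^{K}R(g_{K})$ with $R(g_{K})\in\Pcal$, hence $f\in\sum_{|K|=d}\phi^{K}\Pcal$. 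Taking $d=1$ shows $\cL\cap\Pcal=\sum_{i}\phi_{i}\Pcal$, so $(\cL\cap\Pcal)^{d}=\sum_{|K|=d}\phi^{K}\Pcal$, and therefore $\cL^{d}\cap\Pcal\subseteq(\cL\cap\Pcal)^{d}$; the reverse inclusion is immediate.

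The only genuine content is the finite-dimensional verification in degree $3$ in the second paragraph — that the eight cubic generators of $\cK\cK'\cK''$ lie in the ideal generated by the five listed polynomials, or equivalently that the restriction map on cubics has the expected rank. Everything else is formal, and this is the step where, if a clean conceptual argument for the general position of the three planes proves awkward, I would fall back on an explicit symbolic computation.
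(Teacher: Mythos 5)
Your proof is correct. For the second identity it is essentially the paper's own argument: both of you write $f\in(\cK\cap\cK'\cap\cK'')^{d}\cap\Pcal$ in terms of the generators $\phi_{8},\dots,\phi_{12}$, which lie in $\Pcal$, and then apply the averaging (Reynolds) operator over $[G,G]$, using its $\Pcal$-linearity to replace the coefficients by their averages; the paper phrases this as taking $\frac{1}{|[G,G]|}\sum_{g}gf$ rather than naming the operator, but the content is identical. The only real divergence is in the first identity, which the paper dispatches with ``a direct computation, e.g.\ in Macaulay2,'' whereas you reduce it by hand: you reuse the decomposition $\cK\cap\cK'\cap\cK''=(z_{1}z_{4}-z_{2}z_{3})+\cK\cdot\cK'\cdot\cK''$ already verified inside the proof of Lemma~\ref{lemma:powers-and-intersections-S3} to see that the degree-$2$ part is spanned by $\phi_{8}$, and then everything comes down to the equality of degree-$3$ parts. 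That remaining rank check does hold and can even be done without a computer: the three planes are parametrized by $(\epsilon^{j}s,s,\epsilon^{j}t,t)$, $j=0,\pm 1$, a monomial $x_{1}^{a}y_{1}^{b}x_{2}^{c}y_{2}^{d}$ restricts on the $j$-th plane to $\epsilon^{j(a+c)}s^{a+b}t^{c+d}$, and since for each fixed $(a+b,c+d)$ the exponent $a+c$ realizes all residues mod $3$ while the vectors $(1,\epsilon^{k},\epsilon^{-k})$, $k=0,1,2$, are Vandermonde-independent, the restriction map on cubics has full rank $12$, giving $\dim(\cK\cap\cK'\cap\cK'')_{3}=20-12=8$ as you claim. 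So your route trades the paper's one-line appeal to Macaulay2 for a short, self-contained linear-algebra verification; both are sound, and yours has the advantage of making clear exactly which finite computation is being relied upon.
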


\begin{proof}
The first part is a direct computation, e.g. in Macaulay2. To prove the second part note that~$\Pcal$, as the ring of invariants of $[G,G]$, is generated by monomials of the form $x_{1}^{\alpha_{1}}y_{1}^{\beta_{1}}x_{2}^{\alpha_{2}}y_{2}^{\beta_{2}}$, where $\alpha_{1} + \alpha_{2} \equiv \beta_{1}+\beta_{2}\pmod{3}$. Only one inclusion needs to be proved: take $f\in (\cK\cap \cK'\cap \cK'')^{d}\cap \Pcal$, we show that it belongs to the right-hand side. Since $f\in (\cK\cap \cK'\cap \cK'')^{d}$, we may write 
\begin{equation*}
f = \sum_{i_{1}+\ldots + i_{5} = d}p_{1}^{i_{1}}\cdots p_{5}^{i_{5}}\alpha_{i_{1},\ldots, i_{5}},
\end{equation*}
where $p_{i}$ are generators of $\cK\cap \cK'\cap \cK''$ listed above and $\alpha_{i_{1},\ldots, i_{5}} \in \bC[x_1,y_1,x_2,y_2]$. And since $f \in \Pcal$, taking the average over~$[G,G]$ 
we get
\begin{equation*}
f = \frac{1}{|[G,G]|}\sum_{g \in [G,G]} gf = \sum_{i_{1}+\ldots + i_{5} = d}p_{1}^{i_{1}}\cdots p_{5}^{i_{5}}\overline{\alpha}_{i_{1},\ldots, i_{5}},
\end{equation*}
where $\overline{\alpha}_{i_{1},\ldots, i_{5}} = \frac{1}{|[G,G]|}\sum_{g\in G}g\alpha_{i_{1},\ldots,i_{5}}\in \Pcal$. And each element of this sum already belongs to $(\cK\cap \cK'\cap \cK''\cap \Pcal)^{d}$.
\end{proof}

Recall that $\cI = \ker \kappa$, take the pull-backs of considered ideals through $\kappa$:  $\wt{\cK} = \kappa^{-1}(\cK)$, $\wt{\cK}' = \kappa^{-1}(\cK')$, $\wt{\cK}'' = \kappa^{-1}(\cK'')$.

\begin{corollary}\label{corollary:preimage-of-intersection-S3} We have
\begin{equation*}
\kappa^{-1}((\cK\cap \cK'\cap \cK'')^{d}) = (\wt{\cK}\cap \wt{\cK}'\cap \wt{\cK}'')^{d} + \cI.
\end{equation*}
\end{corollary}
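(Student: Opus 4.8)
The plan is to establish the two inclusions separately; only ``$\subseteq$'' has content, ``$\supseteq$'' being purely formal. For ``$\supseteq$'': since $\kappa^{-1}$ commutes with intersections, $\wt{\cK}\cap\wt{\cK}'\cap\wt{\cK}'' = \kappa^{-1}(\cK\cap\cK'\cap\cK'')$. Applying $\kappa$ to a product of $d$ elements of this ideal lands inside $(\cK\cap\cK'\cap\cK'')^{d}$, and $\kappa^{-1}((\cK\cap\cK'\cap\cK'')^{d})$ is an ideal, so it contains all of $(\wt{\cK}\cap\wt{\cK}'\cap\wt{\cK}'')^{d}$; it also contains $\cI=\ker\kappa$ since $\kappa(\cI)=\{0\}$. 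Hence it contains the sum $(\wt{\cK}\cap\wt{\cK}'\cap\wt{\cK}'')^{d}+\cI$.

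For ``$\subseteq$'': take $\wt{f}$ in the left-hand side and set $f=\kappa(\wt{f})\in(\cK\cap\cK'\cap\cK'')^{d}$. As $f$ lies in the image $\Pcal$ of $\kappa$, in fact $f\in(\cK\cap\cK'\cap\cK'')^{d}\cap\Pcal$, which by Lemma~\ref{lemma:intersection-gens-S3} equals $(\cK\cap\cK'\cap\cK''\cap\Pcal)^{d}$, the $d$-th power computed inside $\Pcal$. Therefore $f$ can be written as a $\Pcal$-linear combination of $d$-fold products of elements of $\cK\cap\cK'\cap\cK''\cap\Pcal$ --- concretely, of the five generators listed in Lemma~\ref{lemma:intersection-gens-S3}, which are $\phi_{8},\ldots,\phi_{12}$ up to sign and reordering. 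Now lift through the surjection $\kappa$: choose preimages of the coefficients in $\CC[w_{1},\ldots,w_{12}]$ and lift the factors; since each factor lies in $\cK\cap\cK'\cap\cK''$, its lift lies in $\kappa^{-1}(\cK\cap\cK'\cap\cK'')=\wt{\cK}\cap\wt{\cK}'\cap\wt{\cK}''$ (for the five generators one may simply use the variables $w_{8},\ldots,w_{12}$). Assembling these lifts with the same combinatorics produces $\wt{h}\in(\wt{\cK}\cap\wt{\cK}'\cap\wt{\cK}'')^{d}$ with $\kappa(\wt{h})=f=\kappa(\wt{f})$, so $\wt{f}-\wt{h}\in\cI$, and $\wt{f}=\wt{h}+(\wt{f}-\wt{h})$ lies in the right-hand side.

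The one non-formal step is the appeal to Lemma~\ref{lemma:intersection-gens-S3}. Without it, a preimage $f$ of an element of $(\cK\cap\cK'\cap\cK'')^{d}$ is only known to be a $\CC[x_{1},y_{1},x_{2},y_{2}]$-combination of $d$-fold products from $\cK\cap\cK'\cap\cK''$, whose coefficients need not be $[G,G]$-invariant and hence need not lift through $\kappa$; it is precisely the statement that the combination can be taken inside $\Pcal$ (proved there by averaging over $[G,G]$) that makes the lifting argument go through. So I anticipate no genuine obstacle beyond invoking that lemma correctly --- everything else is bookkeeping with preimages of ideals and the surjectivity of $\kappa$.
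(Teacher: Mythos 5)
Your proof is correct and follows essentially the same route as the paper: both directions reduce to the identity $(\cK\cap\cK'\cap\cK'')^{d}\cap\Pcal=(\cK\cap\cK'\cap\cK''\cap\Pcal)^{d}$ from Lemma~\ref{lemma:intersection-gens-S3} together with surjectivity of $\kappa$. The paper merely packages your explicit lifting of coefficients and factors into the general fact that for an epimorphism $\kappa^{-1}(AA')=\kappa^{-1}(A)\kappa^{-1}(A')+\cI$; the content is identical.
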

\begin{proof}
Recall that $\kappa$ is an epimorphism onto~$\Pcal$. In particular, if $A,A'$ are ideals of $\cP$ then $\kappa^{-1}(AA') = \kappa^{-1}(A)\kappa^{-1}(A') + \cI$. Using this fact together with Lemma~\ref{lemma:intersection-gens-S3} we obtain
\begin{multline*}
\kappa^{-1}((\cK\cap \cK'\cap \cK'')^{d}) = \kappa^{-1}((\cK\cap \cK'\cap \cK'')^{d}\cap \Pcal) =\\
= \kappa^{-1}((\cK\cap \cK'\cap \cK''\cap \Pcal)^{d}) = (\kappa^{-1}(\cK\cap \cK'\cap \cK''\cap \Pcal))^{d} + \cI = (\wt{\cK}\cap \wt{\cK}'\cap \wt{\cK}'')^{d} + \cI.
\end{multline*}
\end{proof}

The last observation can be verified by a direct computation in Macaulay2.

\begin{lemma}\label{lemma:intersection-of-preimages-S3}
$\wt{\cK}\cap \wt{\cK}'\cap \wt{\cK}'' = \cJ + \cI$.
\end{lemma}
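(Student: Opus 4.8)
The plan is to prove the equality $\wt{\cK}\cap \wt{\cK}'\cap \wt{\cK}'' = \cJ + \cI$ by a direct computation in a computer algebra system, supplemented by a conceptual explanation of why the answer comes out this way. Recall that $\kappa\colon \CC[w_1,\ldots,w_{12}]\to\Pcal$ sends $w_i\mapsto\phi_i$, that $\cI=\ker\kappa$, and that $\wt{\cK}=\kappa^{-1}(\cK)$, etc. Since $\cK$, $\cK'$, $\cK''$ are ideals of $\Pcal$ (generated by those of their generators which actually lie in $\Pcal$, or more precisely $\cK\cap\Pcal$, etc.), and $\kappa$ is surjective, we have $\kappa^{-1}(A\cap B)=\kappa^{-1}(A)\cap\kappa^{-1}(B)$ for ideals $A,B$ of $\Pcal$, so $\wt{\cK}\cap\wt{\cK}'\cap\wt{\cK}''=\kappa^{-1}(\cK\cap\cK'\cap\cK'')$. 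By Lemma~\ref{lemma:intersection-gens-S3} this is $\kappa^{-1}$ of the ideal of $\Pcal$ generated by $\phi_8,\phi_{10},\phi_{12},\phi_{11},\phi_9$ (reading off the five listed polynomials as the images $\kappa(w_8),\ldots,\kappa(w_{12})$ up to sign/reindexing). Hence $\wt{\cK}\cap\wt{\cK}'\cap\wt{\cK}''$ is exactly the preimage under $\kappa$ of the ideal $(\phi_8,\ldots,\phi_{12})\subseteq\Pcal$, which is precisely $\cJ+\cI$ where $\cJ=(w_8,\ldots,w_{12})$.

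The point that still needs checking is the identification in Lemma~\ref{lemma:intersection-gens-S3}: namely that the five generators of $\cK\cap\cK'\cap\cK''$ are, as elements of $\CC[x_1,y_1,x_2,y_2]$, exactly $\phi_8=x_1y_2-x_2y_1$ (note $y_1x_2-x_1y_2=-\phi_8$), $\phi_{10}=x_2^3-y_2^3$, $\phi_{12}=x_1x_2^2-y_1y_2^2$, $\phi_{11}=x_1^2x_2-y_1^2y_2$, $\phi_9=x_1^3-y_1^3$, read off from Table~\ref{table:generatorsS3}. So first I would verify, by inspecting Table~\ref{table:generatorsS3}, that the generating set of $\cK\cap\cK'\cap\cK''$ provided by Lemma~\ref{lemma:intersection-gens-S3} consists precisely of the polynomials $\phi_9,\phi_{10},\phi_{11},\phi_{12},\phi_8$ — i.e. the eigenvectors of $\Ab(G)$ of eigenvalue $-1$. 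This is not a coincidence: an element $f\in\Pcal$ lies in $\cK\cap\cK'\cap\cK''$ iff it vanishes on the three codimension-$2$ fixed loci of the symplectic reflections, and for a homogeneous $f$ Lemma~\ref{lemma:valuations-vs-ideals-S3-II} shows vanishing on one forces vanishing on all; the $+1$-eigenvectors are symmetric under $T$ (do not vanish on $\ker(T-\id)$ generically) while the $-1$-eigenvectors are anti-symmetric, hence vanish there.

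With that identification in hand, the computation is: in $\CC[w_1,\ldots,w_{12}]$, the ideal $\cJ+\cI$ is generated by $w_8,\ldots,w_{12}$ together with the (finitely many) generators of the relation ideal $\cI=\ker\kappa$, and one asks Macaulay2 to confirm that $\kappa^{-1}\bigl((\phi_8,\ldots,\phi_{12})\Pcal\bigr)$ — computed as the preimage of an ideal under the ring map $w_i\mapsto\phi_i$ — equals this. Concretely, compute $\cI$ by eliminating $x_1,y_1,x_2,y_2$ from the ideal $(w_i-\phi_i : i=1,\ldots,12)\subseteq\CC[x_1,y_1,x_2,y_2,w_1,\ldots,w_{12}]$, similarly compute $\wt{\cK},\wt{\cK}',\wt{\cK}''$ (eliminating to get the preimages of $\cK,\cK',\cK''$), intersect the latter three, and check the resulting ideal equals $(w_8,\ldots,w_{12})+\cI$. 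I expect the main obstacle to be purely bookkeeping/computational — making sure the elimination orderings are set up so the Gröbner basis computations terminate in reasonable time, and that the sign and indexing conventions matching the five polynomials of Lemma~\ref{lemma:intersection-gens-S3} to $\phi_8,\ldots,\phi_{12}$ are handled correctly; there is no conceptual difficulty, since surjectivity of $\kappa$ makes the preimage-of-intersection manipulation formal and Lemma~\ref{lemma:intersection-gens-S3} already does the geometric work.

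Combining: $\wt{\cK}\cap\wt{\cK}'\cap\wt{\cK}''=\kappa^{-1}(\cK\cap\cK'\cap\cK'')=\kappa^{-1}\bigl((\phi_8,\ldots,\phi_{12})\Pcal\bigr)=(w_8,\ldots,w_{12})+\cI=\cJ+\cI$, as claimed. Together with Corollary~\ref{corollary:preimage-of-intersection-S3} and Lemma~\ref{lemma:powers-and-intersections-S3}, this will then yield $\kappa^{-1}((\cK\cap\cK'\cap\cK'')^d)=(\wt{\cK}\cap\wt{\cK}'\cap\wt{\cK}'')^d+\cI=(\cJ+\cI)^d+\cI=\cJ^d+\cI$, which is exactly the inclusion~\eqref{equation-valuation-lifting} needed to establish condition~\eqref{assumption:lifting-valuations} for $G=S_3$.
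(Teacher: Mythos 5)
Your proposal is correct, and it in fact does more than the paper, whose entire justification for this lemma is the sentence that it ``can be verified by a direct computation in Macaulay2.'' Your fallback plan (compute $\cI$ and the preimages $\wt{\cK},\wt{\cK}',\wt{\cK}''$ by elimination, intersect, and compare with $\cJ+\cI$) is exactly that computation. But the conceptual argument you supply is a genuinely different route that renders the computation for this particular lemma redundant: preimages commute with intersections, so $\wt{\cK}\cap\wt{\cK}'\cap\wt{\cK}''=\kappa^{-1}(\cK\cap\cK'\cap\cK'')=\kappa^{-1}(\cK\cap\cK'\cap\cK''\cap\Pcal)$; the averaging over $[G,G]$ used in the proof of Lemma~\ref{lemma:intersection-gens-S3} (its $d=1$ instance) shows that the contraction $\cK\cap\cK'\cap\cK''\cap\Pcal$ is generated, \emph{as an ideal of $\Pcal$}, by the five listed polynomials, which you correctly identify (up to sign and reordering) with $\phi_8,\ldots,\phi_{12}=\kappa(w_8),\ldots,\kappa(w_{12})$ from Table~\ref{table:generatorsS3}; and for a surjection $\kappa$ the preimage of the ideal generated by $\kappa(w_8),\ldots,\kappa(w_{12})$ is formally $(w_8,\ldots,w_{12})+\ker\kappa=\cJ+\cI$. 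The one step to state carefully is the contraction claim: the literal statement of Lemma~\ref{lemma:intersection-gens-S3} only says the five polynomials generate $\cK\cap\cK'\cap\cK''$ in $\CC[x_1,y_1,x_2,y_2]$, and in general a generating set of an ideal need not generate its contraction to a subring; here it does because the generators are $[G,G]$-invariant and one can average the coefficients, exactly as in that lemma's proof. With that made explicit, no further Gr\"obner basis work is needed beyond what Lemma~\ref{lemma:intersection-gens-S3} itself already uses, which is a modest but real improvement on the paper's presentation.
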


Summing up, condition~\eqref{equation-valuation-lifting} follows by Lemma~\ref{lemma:powers-and-intersections-S3}, Corollary~\ref{corollary:preimage-of-intersection-S3} and Lemma~\ref{lemma:intersection-of-preimages-S3}. This finishes the proof of Proposition~\ref{proposition:cox-ringS3}.


\section{{Wreath product $\ZZ_{2}\wr S_{2}$}}
\label{section:Z2wr}
\subsection{Representation}\label{section_D8_generators}

In this part we consider the symplectic action of the wreath product $G \simeq \ZZ_{2}\wr S_{2}$ on $V = \CC^{4}$ with coordinates $(x_1,x_2,x_3,x_4)$ and symplectic form $dx_{1}\wedge dx_{3} + dx_{2}\wedge dx_{4}$. The action is given by the embedding $\ZZ_{2} \subset \SL_{2}(\CC)$ and by permutation of factors $\CC^{4} = \CC^{2}\times \CC^{2}$. More precisely, $G$ is generated by matrices

\begin{equation*}
T_{0} =
\begin{pmatrix}
1 & 0 & 0 & 0\\
0 & -1 & 0 & 0\\
0 & 0 & 1 & 0\\
0 & 0 & 0 & -1
\end{pmatrix}
\qquad
T_{2} =
\begin{pmatrix}
0 & 1 & 0 & 0\\
1 & 0 & 0 & 0\\
0 & 0 & 0 & 1\\
0 & 0 & 1 & 0
\end{pmatrix}.
\end{equation*}

\begin{remark}
This is a subgroup of the 32-element group investigated in~\cite{cox_resolutions}. Our notation for symplectic reflections is compatible with the one used therein.
\end{remark}

Below we list basic properties of~$G$, the arguments are straightforward. Let $\nu_{0},\nu_{2}:\CC(V)^{[G,G]}\to \ZZ$ denote monomial valuations corresponding to symplectic reflections $T_{0},T_{2}$ respectively. 

\begin{proposition} \label{proposition:basic-properties-Z2wr} With notation as above:
\begin{itemize}
\item  $G$ is isomorphic to the dihedral group $D_{8}$ of order~$8$, the isomorphism is given by identifying elements $\pm T_{0},\pm T_{2}$ with reflections and $T_{0}T_{2}$ with rotation.
\item The commutator subgroup $H= [G,G] \simeq \bZ_2$ is generated by matrix $-\id$. In particular, $H$ does not contain any symplectic reflection.
\item $\Ab(G) = G/H = \langle T_{0}H,T_{2} H\rangle \cong \ZZ_{2}\times \ZZ_{2}$
\item Matrices $\pm T_{0}$ and $\pm T_{2}$ are all the symplectic reflections in~$G$. Their conjugacy classes are $\{\pm T_{0}\},\ \{\pm T_{2}\}.$ 
In particular, by the McKay correspondence the exceptional divisor of a symplectic resolution $\varphi:X\to Y = V/G$ has two irreducible components $E_{0},E_{2}$ corresponding to these conjugacy classes.
\item The intersection matrix $(E_{i}.C_{j})_{i,j}$ is a direct sum of two $A_{1}$-type Cartan matrices, i.e.
\begin{equation*}
(E_{i}.C_{j})_{i,j} = \begin{pmatrix}
-2 & 0\\
0 & -2
\end{pmatrix}
\end{equation*}
\item $\nu_{i}|_{\CC(V)^{H}} = 2\nu_{E_{i}}, \ i=0,2$ by Lemma~\ref{lemma:restricting-monomial-valuation}.
\end{itemize}
\end{proposition}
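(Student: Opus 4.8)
The proposition bundles together several routine facts about the matrix group $G = \langle T_0, T_2\rangle$, and the plan is to verify the bullets in turn, mostly by direct matrix computation. First I would compute $T_0^2 = T_2^2 = \id$ and $(T_0T_2)^2 = -\id$, so that $T_0T_2$ has order $4$, and check the dihedral relation $T_0(T_0T_2)T_0 = (T_0T_2)^{-1}$; this identifies $G$ with $D_8$, the rotation being $T_0T_2$ and the four reflections $\pm T_0,\pm T_2$, and the same relations realise $G$ as $\ZZ_2^2\rtimes S_2 = \ZZ_2\wr S_2$. For the commutator subgroup I would use that for $D_8$ it coincides with the centre and is generated by the square of the rotation, i.e. by $(T_0T_2)^2 = -\id$; hence $H = \{\pm\id\}\cong\bZ_2$, and since $-\id$ fixes only the origin it is not a symplectic reflection, so $H$ contains none. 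Then $\Ab(G) = G/H$ has order $4$ and is generated by the images of $T_0$ and $T_2$, two distinct involutions, so $\Ab(G)\cong\ZZ_2\times\ZZ_2$.

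For the symplectic reflections, recall that an element of $G\subset\Sp(V)$, $\dim V = 4$, is a symplectic reflection precisely when $1$ occurs as an eigenvalue of multiplicity $2$. Running through the eight elements: $\pm T_0$ and $\pm T_2$ each have eigenvalues $1,1,-1,-1$ (immediate for $T_0 = \diag(1,-1,1,-1)$, and for $T_2$ one diagonalises the two $2\times2$ swap blocks), $T_0T_2$ and $(T_0T_2)^3$ have eigenvalues $\pm i$, and $\pm\id$ are evidently not reflections. So $\pm T_0,\pm T_2$ are all the symplectic reflections. In $D_8$ the reflections fall into two conjugacy classes of size two; as $-\id$ is central these are $\{\pm T_0\}$ and $\{\pm T_2\}$, and $T_0\not\sim T_2$ since otherwise the four reflections would form a single class, which $D_8$ does not possess. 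The statement that the exceptional divisor of $\varphi$ has two irreducible components $E_0,E_2$ is then Kaledin's McKay correspondence \cite{KaledinMcKay}.

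It remains to identify the intersection matrix. By Remark~\ref{remark:An-type-singularities}, $V/G$ has only $A_n$-type transverse singularities in codimension $2$, along the images of the fixed planes of the symplectic reflections. For $T_0$ I would compute the pointwise stabiliser in $G$ of $\ker(T_0-\id) = \langle e_1,e_3\rangle$, which is $\{\id,T_0\}$; it acts on the complementary symplectic plane $\langle e_2,e_4\rangle$ as $\{\pm\id\}\subset\SL_2(\CC)$, so the transverse singularity is $\CC^2/\bZ_2$, of type $A_1$. The same computation for $T_2$ (stabiliser $\{\id,T_2\}$, acting as $\pm\id$ on $\langle e_1-e_2,e_3-e_4\rangle$) again gives $A_1$. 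Hence $(E_i.C_j)_{i,j}$ is the direct sum of two $A_1$ Cartan matrices, i.e. $\diag(-2,-2)$, which in particular is invertible. Finally the last bullet is exactly Lemma~\ref{lemma:restricting-monomial-valuation} applied to $T_0$ and $T_2$, both of order $r_0 = r_2 = 2$.

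None of these steps is deep. The one deserving the most care is the determination of the conjugacy classes of symplectic reflections together with the transverse singularity types, since it is precisely this data that fixes the intersection matrix and so plugs into the general framework of Section~\ref{section:setting} (Proposition~\ref{prop_diag_class_groups} and Remark~\ref{remark:class-group-sublattice}).
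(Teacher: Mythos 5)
Your proposal is correct and proceeds exactly by the direct matrix verification the paper has in mind (the paper omits the proof, declaring the arguments straightforward, and the intersection matrix is read off from the transverse $A_1$ singularity types via the cited results of Wierzba--Wi\'sniewski and Andreatta--Wi\'sniewski, just as you do). All the individual computations check out, including the identification of the pointwise stabilisers of the two fixed planes and the two conjugacy classes of reflections.
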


\begin{table}[h!]
\begin{tabular}{l r r}
generators &  & eigenvalues  \\
 & $T_{0}$ & $T_{2}$\\
$\phi_{13} = -x_{1}^{2} - x_{2}^{2} + x_{3}^{2} + x_{4}^{2}$ & $1$ & $1$\\
$\phi_{14} = \sqrt{-1}(x_{1}^{2} + x_{2}^{2} + x_{3}^{2} + x_{4}^{2})$ & $1$ & $1$\\
$\phi_{34} = 2(x_{1}x_{3} + x_{2}x_{4})$ & $1$ & $1$\\
$\phi_{01} = -2(x_{1}x_{4} + x_{2}x_{3})$ & $-1$ & $1$\\
$\phi_{03} = 2\sqrt{-1}(x_{1}x_{2} + x_{3}x_{4})$ & $-1$ & $1$\\
$\phi_{04} = 2(-x_{1}x_{2} + x_{3}x_{4})$ & $-1$ & $1$\\
$\phi_{12} = 2(x_{1}x_{3} - x_{2}x_{4})$ & $1$ & $-1$\\
$\phi_{23} = \sqrt{-1}(-x_{1}^{2} + x_{2}^{2} - x_{3}^{2} + x_{4}^{2})$ & $1$ & $-1$\\
$\phi_{24} = x_{1}^{2} - x_{2}^{2} - x_{3}^{2} + x_{4}^{2}$ & $1$ & $-1$\\
$\phi_{02} = 2\sqrt{-1}(x_{1}x_{4} - x_{2}x_{3})$ & $-1$ & $-1$\\

\end{tabular}
\caption{Generators of $\CC[V]^{H}$ which are eigenvectors of $\Ab(G)$ (cf.~\cite[3.13]{cox_resolutions}).}
\label{table:generatorsZ2wr}
\end{table}

\begin{proposition}
The elements $\phi_{ij}$ in Table~\ref{table:generatorsZ2wr} are eigenvectors of the action of $\Ab(G)$, which generate the ring of invariants $\CC[V]^{H}\subset \CC[V]$.
\end{proposition}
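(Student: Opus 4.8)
The plan is to split the statement into two independent claims: that each $\phi_{ij}$ is an eigenvector for the $\Ab(G)$-action, and that the ten listed polynomials generate $\CC[V]^{H}$ as a $\CC$-algebra.

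For the eigenvector claim, since $\Ab(G)$ is generated by the images of $T_{0}$ and $T_{2}$, it suffices to check that each $\phi_{ij}$ is a common eigenvector of $T_{0} = \diag(1,-1,1,-1)$ and of $T_{2}$, which interchanges $x_{1}\leftrightarrow x_{2}$ and $x_{3}\leftrightarrow x_{4}$. I would do this by direct substitution, reading off the signs; the eigenvalues obtained coincide with the two rightmost columns of Table~\ref{table:generatorsZ2wr}. This step is purely mechanical.

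For the generation claim, I would first observe that $H = \langle -\id\rangle$ sends a monomial $x^{\alpha}$ to $(-1)^{|\alpha|}x^{\alpha}$, so $\CC[V]^{H}$ is spanned by the even-degree monomials and hence is generated as an algebra by the ten quadratic monomials $x_{i}x_{j}$, $1\le i\le j\le 4$. It then remains to show that these ten quadrics lie in the $\CC$-linear span of the $\phi_{ij}$. I would organize the $\phi_{ij}$ into the four ``pure squares'' $\phi_{13},\phi_{14},\phi_{23},\phi_{24}$ and the three ``mixed pairs'' $\{\phi_{34},\phi_{12}\}$, $\{\phi_{01},\phi_{02}\}$, $\{\phi_{03},\phi_{04}\}$ governing $\{x_{1}x_{3},x_{2}x_{4}\}$, $\{x_{1}x_{4},x_{2}x_{3}\}$, $\{x_{1}x_{2},x_{3}x_{4}\}$ respectively. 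Within each mixed pair the $2\times 2$ transition matrix is plainly invertible (suitable sums and differences recover each monomial up to a nonzero scalar), and for the pure squares the $4\times 4$ transition matrix to the basis $x_{1}^{2},\dots,x_{4}^{2}$ is $\pm 1$ times a Hadamard matrix, hence invertible. So all ten quadrics are recovered, which completes the argument.

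The only ``hard'' part is really the bookkeeping in these two verifications, and even that can be shortened: since $G$ is a subgroup of the $32$-element group of~\cite{cox_resolutions} with compatible notation, the statement is essentially~\cite[3.13]{cox_resolutions} restricted to this subgroup, so one may alternatively just cite that computation.
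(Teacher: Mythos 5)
Your proof is correct, but it takes a more self-contained route than the paper, which simply observes that $G$ is a subgroup of the $32$-element group of~\cite{cox_resolutions} with the same commutator subgroup $[G,G]=\langle -\id\rangle$, and cites \cite[Lem.~3.12]{cox_resolutions} for the fact that these ten polynomials are eigenvectors generating $\CC[V]^{H}$ — essentially the one-line alternative you mention at the end. Your direct argument is a genuine (and arguably preferable) replacement: the reduction of $\CC[V]^{H}$ to the ten quadratic monomials via $-\id$ acting by $(-1)^{|\alpha|}$ is exactly right, and the linear-algebra bookkeeping checks out — the four ``pure square'' rows are, up to nonzero scalars, $(1,1,1,1)$, $(-1,-1,1,1)$, $(-1,1,-1,1)$, $(1,-1,-1,1)$, which are pairwise orthogonal and hence independent, and each mixed pair $\{\phi_{34},\phi_{12}\}$, $\{\phi_{01},\phi_{02}\}$, $\{\phi_{03},\phi_{04}\}$ visibly has an invertible $2\times 2$ transition matrix. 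What your approach buys is independence from the external computation (and from Singular/Macaulay-style verification); what the paper's citation buys is brevity and consistency of notation with~\cite{cox_resolutions}, which it reuses heavily in the later sections on the embedding of $\Spec\Rcal$. The only thing to make explicit is that you are using $H=\langle-\id\rangle$, which is established in the preceding proposition on basic properties of $G$, so there is no gap.
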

\begin{proof}
See~\cite[Lem.~3.12]{cox_resolutions}. These polynomials are eigenvectors of the action of the abelianization of the 32-element group investigated therein. Since~$G$ is its subgroup and commutator subgroups are equal, they are also eigevectors for $\Ab(G)$.
\end{proof}

\begin{remark}\label{remark_def_phi_ij}
Using~\eqref{eqn:intersections-vs-valuations} we obtain an explicit description of generators $\ovl{\phi}_{ij}$ of the Cox ring of symplectic resolutions of $V/G$, proposed in Theorem~\ref{theorem:general-theorem}:
\begin{equation}
\overline{\phi}_{ij} =\begin{cases} \phi_{ij}t_{0}t_{2} & \hbox{if }\{0,2\}= \{i,j\},\\ \phi_{ij}t_{0} &  \hbox{if } \{0,2\}\cap\{i,j\} = \{0\},\\ \phi_{ij}t_{2} & \hbox{if } \{0,2\}\cap \{i,j\} = \{2\}, \\ \phi_{ij}, & \hbox{if } \{0,2\}\cap \{i,j\} = \emptyset.
\end{cases}
\end{equation}
\end{remark}

To simplify the notation we set $\phi_{21} = \phi_{12}$.

\begin{proposition}\label{proposition:CoxRingZ2wr}
Elements $t_{0}^{-2}, t_{2}^{-2},\phi_{02}t_{0}t_{2}, \phi_{0i}t_{0}, \phi_{2j}t_{2}, \phi_{ij}$, for $i,j=1,3,4$, $i<j$ generate the image $\Theta(\Rcal(X))\subset \Pcal[t_{0}^{\pm 1},t_{2}^{\pm 1}]$ of the Cox ring of symplectic resolutions $X\to V/G$.
\end{proposition}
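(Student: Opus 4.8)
The plan is to apply Theorem~\ref{theorem:general-theorem} directly: by Proposition~\ref{proposition:basic-properties-Z2wr} the group $G = \ZZ_2\wr S_2$ is generated by symplectic reflections, its commutator subgroup $H = [G,G] = \langle -\id\rangle$ contains no symplectic reflection, and by Remark~\ref{remark:class-group-sublattice} we have $\Cl(X) = \bigoplus_i \ZZ L_i$ so that $\ovl{\Theta} = \Theta$. Hence, granting condition~\eqref{assumption:lifting-valuations}, Theorem~\ref{theorem:general-theorem} tells us that $\Theta(\cR(X))$ is generated by the characters $t_i^{E_0.C_i}t_2^{E_2.C_i}$, which here (since the intersection matrix is $\diag(-2,-2)$) are exactly $t_0^{-2}$ and $t_2^{-2}$, together with the lifted generators $\ovl{\phi}_{ij}$ of $\Pcal = \CC[V]^H$. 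The explicit form of the $\ovl{\phi}_{ij}$ was computed in Remark~\ref{remark_def_phi_ij} from~\eqref{eqn:intersections-vs-valuations}: one gets $\phi_{02}t_0t_2$, $\phi_{0i}t_0$ for $i=1,3,4$, $\phi_{2j}t_2$ for $j=1,3,4$, and $\phi_{ij}$ for $i,j\in\{1,3,4\}$. Listing these alongside $t_0^{-2},t_2^{-2}$ is precisely the asserted generating set, so the whole content of the proposition reduces to verifying hypothesis~\eqref{assumption:lifting-valuations} for this representation.

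Therefore the body of the proof is the verification of~\eqref{assumption:lifting-valuations}, i.e. that every homogeneous $f\in\Pcal$ admits a lift $\wt f\in\CC[w_{13},w_{14},w_{34},w_{01},w_{03},w_{04},w_{12},w_{23},w_{24},w_{02}]$ through $\kappa$ (sending each $w_{ij}\mapsto\phi_{ij}$) with $\nu_k(f)\le\wt\nu_k(\wt f)$ for $k=0,2$. I would follow the template of section~\ref{section_lifting_S3}. First, identify each valuation with an ideal: after a linear change of coordinates, $T_0$ (resp. $T_2$) fixes a 2-plane in $V$ whose ideal $\cK_0$ (resp. $\cK_2$) is generated by two linear forms, and $\nu_k$ is the $\cK_k$-adic valuation, exactly as in Lemma~\ref{lemma:valuations-vs-ideals-S3}; likewise $\wt\nu_0$ is the $\cJ_0$-adic valuation where $\cJ_0 = (w_{01},w_{03},w_{04},w_{02})$ is generated by those $w_{ij}$ on which $T_0$ acts by $-1$, and $\wt\nu_2$ is the $\cJ_2$-adic valuation for $\cJ_2 = (w_{12},w_{23},w_{24},w_{02})$. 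The crucial simplification over the $S_3$ case is that here the two conjugacy classes of symplectic reflections each have only two elements $\{\pm T_k\}$, and $\pm T_k$ fix the same subspace, so $\cK_k$ is $G$-stable up to character and there is no need for the conjugation argument of Lemma~\ref{lemma:valuations-vs-ideals-S3-II} nor for the delicate "powers of intersections" manipulation of Lemma~\ref{lemma:powers-and-intersections-S3}. Thus~\eqref{assumption:lifting-valuations} becomes the statement that for all $d_0,d_2\ge 0$,
\begin{equation*}
\kappa^{-1}\bigl(\cK_0^{d_0}\cap\cK_2^{d_2}\bigr)\subseteq \cJ_0^{d_0}\cap\cJ_2^{d_2} + \cI,
\end{equation*}
where $\cI = \ker\kappa$, and since $\cK_0$ and $\cK_2$ are generated by linear forms in disjoint sets of variables one has $\cK_0^{d_0}\cap\cK_2^{d_2} = \cK_0^{d_0}\cK_2^{d_2}$ and similarly for the $\cJ_k$. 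I would then reduce, as in Lemma~\ref{lemma:intersection-gens-S3} and Corollary~\ref{corollary:preimage-of-intersection-S3}, to computing generators of $\cK_0^{d_0}\cK_2^{d_2}\cap\Pcal$ in terms of the $\phi_{ij}$, using that $\Pcal = \CC[V]^{\langle-\id\rangle}$ is spanned by even-degree monomials and the averaging trick to push coefficients into $\Pcal$; the needed ideal membership and generator computations are finite Macaulay2~\cite{M2} checks.

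The main obstacle I anticipate is exactly the same as in the $S_3$ case but now with two parameters: proving the key inclusion above for all $(d_0,d_2)$ rather than checking finitely many cases, and in particular controlling the interaction between the two valuations $\nu_0,\nu_2$ — one must show that a lift can be chosen that is simultaneously good for both, which is why $\phi_{02}$ (lying in both $\cK_0$ and $\cK_2$, and $w_{02}$ in both $\cJ_0,\cJ_2$) plays a delicate role. The identity $\cK_0^{d_0}\cap\cK_2^{d_2} = \cK_0^{d_0}\cK_2^{d_2}$ does remove the need for a "powers versus intersections" lemma, but the bidegree bookkeeping in the induction (analogous to the nested inductions in the proof of Lemma~\ref{lemma:powers-and-intersections-S3}) remains the technical heart. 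Since the paper states that a complete argument for~\eqref{assumption:lifting-valuations} in this case will appear in~\cite{valuationslift}, in this section I would carry out the reduction to the displayed ideal inclusion, verify the base and small cases and the generator computations explicitly, indicate the inductive scheme, and defer the full combinatorial induction to~\cite{valuationslift}, concluding that Proposition~\ref{proposition:CoxRingZ2wr} follows from Theorem~\ref{theorem:general-theorem}.
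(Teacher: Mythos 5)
Your overall architecture is the same as the paper's: reduce Proposition~\ref{proposition:CoxRingZ2wr} to Theorem~\ref{theorem:general-theorem} plus condition~\eqref{assumption:lifting-valuations}, translate the valuations into ideal membership following the $S_3$ template, and defer the genuinely hard point --- a lift that is simultaneously compatible with $\nu_0$ and $\nu_2$ --- to~\cite{valuationslift}. The paper does exactly this, proving only the one-valuation-at-a-time statement (Lemma~\ref{lemma:lifting-valuations-separatedlyZ2wr}), so your level of completeness matches. You also correctly identify that the interaction of the two valuations through $\phi_{02}$/$w_{02}$ is the technical heart.

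However, there is a concrete error in your setup. You assert that $T_k$ and $-T_k$ ``fix the same subspace,'' and on that basis you discard the conjugation argument (the analogue of Lemma~\ref{lemma:valuations-vs-ideals-S3-II}). This is false: the $+1$-eigenspace of $-T_0$ is the $(-1)$-eigenspace of $T_0$, so the two reflections in the class $\{\pm T_0\}$ fix \emph{complementary} planes, and the ideals $\cK_0 = (x_1,x_3)$ and $\cK_0' = (x_2,x_4)$ (likewise $\cK_2$, $\cK_2'$) are genuinely different. The conjugation-plus-homogeneity step --- the paper's Lemma~\ref{lemma_representatives_intersection} --- is indispensable: it is what upgrades $f\in\cK_k^d$ to $f\in\cK_k^d\cap(\cK_k')^d = (\cK_k\cap\cK_k')^d$, and only this intersection has preimage $\cJ_k^d+\cI$ under $\kappa$ (Lemmas~\ref{lemma:intersection-gens-Z2wr}--\ref{lemma:intersection-of-preimages-Z2wr}); the preimage of $\cK_k^d$ alone is not contained in $\cJ_k^d+\cI$ for general (non-homogeneous) elements. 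Consequently your displayed inclusion must be read as a statement about homogeneous elements only, and its proof still routes through the primed ideals. The true simplification over the $S_3$ case is a different one: because each conjugacy class has only two elements fixing complementary planes, $\cK_k$ and $\cK_k'$ are generated by independent linear forms, so the powers-versus-intersections identity $(\cK_k\cap\cK_k')^d=\cK_k^d\cap(\cK_k')^d$ is immediate (Lemma~\ref{lemma:powers-and-intersections-Z2wr}), whereas for $S_3$ three pairwise non-complementary planes forced the lengthy induction of Lemma~\ref{lemma:powers-and-intersections-S3}. With the conjugation lemma reinstated, your reduction is sound and agrees with the paper's.
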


We will denote by~$\Rcal$ the ring generated by elements listed in Proposition~\ref{proposition:CoxRingZ2wr}.

In section~\ref{section_D8_lifting} we give a part of the proof of Proposition~\ref{proposition:CoxRingZ2wr}. A complete argument, based on different methods than these introduced here, will be presented in~\cite{valuationslift}. Next, in sections \ref{section_D8_ideal}-\ref{section_D8_central_fiber}, we investigate the geometry of certain GIT quotients of $\Spec \cR$: we prove their smoothness and provide an explicit description of the central fiber of a resolution.

\subsection{Lifting homogeneous elements}\label{section_D8_lifting}

Here we prove a weaker version of condition~\eqref{assumption:lifting-valuations} for $G = \ZZ_{2}\wr S_{2}$. In Lemma~\ref{lemma:lifting-valuations-separatedlyZ2wr} we show that it is satisfied for each valuation $\nu_{0}, \nu_2$ separately. The proof goes along the same lines as the one for $S_3$ in section~\ref{section_lifting_S3}. However, the argument is simpler since in $\ZZ_{2}\wr S_{2}$ each conjugacy class of symplectic reflections has only two elements. 

Let $\kappa\colon \CC[w_{ij}\colon 0 \leq i < j \leq 4]\to \Pcal$ be the surjective ring homomorphism which sends $w_{ij}$ to the eigenvectors $\phi_{ij}$ from Table~\ref{table:generatorsZ2wr}.

Define linear maps $\wt{T}_0, \wt{T}_2\colon \CC^{10}\to \CC^{10}$:
$$
\wt{T}_{0}(e_{ij}) = \begin{cases}e_{ij} & \hbox{if } 0\not\in \{i,j\}\\ -e_{ij} &  \hbox{if } 0\in \{i,j\}\end{cases}\qquad\qquad
\wt{T}_{2}(e_{ij}) = \begin{cases}e_{ij} & \hbox{if } 2\not\in \{i,j\}\\ -e_{ij} & \hbox{if } 2\in \{i,j\}\end{cases} 
$$
where $\{e_{ij} \colon 0 \leq i < j \leq 4\}$ is a basis of $\CC^{10}$ dual to the set of variables $w_{ij}$. By $\wt{\nu}_0, \wt{\nu}_2$ we denote corresponding monomial valuations.

Note that considered monomial valuations correspond to ideals generated by linear forms. For each conjugacy class of symplectic reflections in~$G$ and for each representative we write the ideal of the subspace of~$\CC^{4}$ fixed by this reflection:
\begin{equation*}
\begin{aligned}
& \cK_{0} := I(\ker(\id - T_{0}))= (x_{1}, x_{3}), \\
& \cK'_0 := I(\ker(\id + T_{0}))= (x_{2}, x_{4}), \\
& \cK_{2} := I(\ker(\id - T_{2}))= (x_{1} - x_{2}, x_{3} - x_{4}), \\
& \cK'_{2} := I(\ker(\id + T_{2})) = (x_{1} + x_{2},x_{3} + x_{4}).
\end{aligned}
\end{equation*}

Also, define ideals $\cJ_{0},\cJ_{2} \triangleleft \CC[w_{ij}\colon 0 \leq i < j \leq 4]$ by setting $\cJ_{0} = (w_{01},w_{02},w_{03},w_{04})$ and $\cJ_{2} = (w_{02},w_{12},w_{23},w_{24})$. The following properties can be proved in the same way as Lemmas~\ref{lemma:valuations-vs-ideals-S3} and~\ref{lemma:valuations-vs-ideals-S3-II}.

\begin{lemma}\label{lemma:valuations-vs-ideals-Z2wr}
The valuation $\nu_{i}$ is the valuation of ideal $\cK_{i}$, that is $\nu_{i}(f) = d$ is equivalent to $f\in \cK_{i}^{d}$ and $f\not\in \cK_{i}^{d+1}$ for $i=0,2$. The analogous statement is true for valuations $\wt{\nu}_{i}$ and ideals $\cJ_{i}$.
\end{lemma}
 
\begin{lemma}\label{lemma_representatives_intersection}
If $f \in \cP$ is homogeneous with respect to the $\Cl(Y)$-grading and $f \in \cK_i^d$, i.e. $\nu_i(f) \geq d$, then $f\in \cK_i^d \cap (\cK_i')^d$. 
\end{lemma}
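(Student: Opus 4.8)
The plan is to follow the proof of Lemma~\ref{lemma:valuations-vs-ideals-S3-II} essentially verbatim; the only input specific to $G = \ZZ_2 \wr S_2$ is that within each conjugacy class of symplectic reflections the two representatives $T_i$ and $-T_i$ are conjugate, which is recorded in Proposition~\ref{proposition:basic-properties-Z2wr}.

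First I would fix $i \in \{0,2\}$ and note the elementary identity $\cK_i' = I(\ker(\id + T_i)) = I(\ker(\id - (-T_i)))$, together with the equivariance of taking vanishing ideals of linear subspaces: for $g \in \GL(V)$ and a linear subspace $W \subseteq V$ one has $g\cdot I(W) = I(gW)$, and hence $g\cdot I(W)^d = I(gW)^d$ for every $d \in \bN$. By Proposition~\ref{proposition:basic-properties-Z2wr} the conjugacy class of $T_i$ in $G$ is $\{T_i, -T_i\}$, so there exists $g \in G$ with $g T_i g^{-1} = -T_i$.

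Now suppose $f \in \cP$ is homogeneous for the $\Cl(Y) = \Ab(G)^\vee$-grading and $f \in \cK_i^d$. Applying $g$ and using the displayed identities,
\[
g\cdot f \in g\cdot I(\ker(\id - T_i))^d = I\big(\ker(\id - gT_ig^{-1})\big)^d = I(\ker(\id + T_i))^d = (\cK_i')^d .
\]
On the other hand, $f$ homogeneous for the $\Cl(Y)$-grading means $f$ lies in a single $\Ab(G)$-eigenspace of $\CC[V]^{H}$, so $G$ acts on $f$ through one character $\chi \in G^\vee$; in particular $g\cdot f = \chi(g)\, f$ with $\chi(g) \in \CC^*$. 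Therefore $f \in (\cK_i')^d$ as well, and together with the hypothesis $f \in \cK_i^d$ this gives $f \in \cK_i^d \cap (\cK_i')^d$. The case $i = 2$ is identical, using the conjugacy class $\{T_2, -T_2\}$.

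I do not expect a genuine obstacle here. The one point to be careful about is the claim that $T_i$ and $-T_i$ actually lie in the same $G$-conjugacy class: this is exactly the conjugacy-class computation of Proposition~\ref{proposition:basic-properties-Z2wr}, and it is precisely the step that would fail for a symplectic reflection of order larger than $2$, explaining why the homogeneity hypothesis on $f$ is needed and is not automatic.
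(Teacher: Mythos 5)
Your proof is correct and is exactly the argument the paper intends: the paper proves this lemma by saying it goes ``in the same way as'' Lemma~\ref{lemma:valuations-vs-ideals-S3-II}, i.e.\ by using that $T_i$ and $-T_i$ are conjugate in $G$, that conjugation carries $\cK_i^d$ to $(\cK_i')^d$, and that homogeneity forces $g\cdot f = \chi(g)f$. (Only your closing aside is slightly off: what matters is not the order of the reflection but that homogeneity lets you transport membership across the whole conjugacy class, exactly as in the three-element class for $S_3$.)
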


Next we investigate the intersections $\cK_i^d \cap (\cK_i')^d$ and their preimages under~$\kappa$.

\begin{lemma}\label{lemma:powers-and-intersections-Z2wr}
For $d \in \bN$ and $i=0,2$ we have
$(\cK_{i}\cap \cK'_{i})^{d} = \cK_{i}^{d}\cap (\cK'_{i})^{d}$.
\end{lemma}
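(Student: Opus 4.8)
The plan is to reduce everything to a statement about monomial ideals by a linear change of coordinates, exactly in the spirit of the proof of Lemma~\ref{lemma:powers-and-intersections-S3}. The inclusion $(\cK_i\cap \cK'_i)^d \subseteq \cK_i^d\cap (\cK'_i)^d$ is immediate, so only the reverse inclusion needs an argument.

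First I would note that for each $i=0,2$ there is a linear change of coordinates on $\CC^4$ turning both $\cK_i$ and $\cK'_i$ simultaneously into monomial ideals generated by two disjoint pairs of coordinate variables: for $i=0$ the ideals $\cK_0=(x_1,x_3)$ and $\cK'_0=(x_2,x_4)$ are already of this form, and for $i=2$ one passes to the coordinates $x_1\pm x_2$, $x_3\pm x_4$, in which $\cK_2$ and $\cK'_2$ become generated by the variables of the two disjoint pairs. So we may assume $\cK_i=(u_1,u_2)$ and $\cK'_i=(v_1,v_2)$ with $u_1,u_2,v_1,v_2$ distinct variables.

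Since the monomial generators of $\cK_i$ and of $\cK'_i$ are pairwise coprime, one has $\operatorname{lcm}(u_a,v_b)=u_a v_b$ and hence $\cK_i\cap \cK'_i = \cK_i\cdot \cK'_i$, so that $(\cK_i\cap \cK'_i)^d = \cK_i^d\cdot(\cK'_i)^d$. To conclude I would observe that $\cK_i^d$ (resp. $(\cK'_i)^d$) is the monomial ideal generated by the degree-$d$ monomials in $u_1,u_2$ (resp. in $v_1,v_2$); hence a monomial lies in $\cK_i^d\cap(\cK'_i)^d$ precisely when it has degree at least $d$ both in $u_1,u_2$ and in $v_1,v_2$, and any such monomial is divisible by the product of a degree-$d$ monomial in $u_1,u_2$ and a degree-$d$ monomial in $v_1,v_2$. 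Therefore $\cK_i^d\cap(\cK'_i)^d \subseteq \cK_i^d\cdot(\cK'_i)^d = (\cK_i\cap\cK'_i)^d$, as the intersection of two monomial ideals is again monomial.

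I do not expect any real obstacle here: the statement is the $\ZZ_2\wr S_2$ analogue of the easy half of the $S_3$ computation — compare the remark inside the proof of Lemma~\ref{lemma:powers-and-intersections-S3} that $(\cK'\cdot\cK'')^d=(\cK')^d\cap(\cK'')^d$ for ideals generated by linearly independent forms — and, in contrast to the $S_3$ case, there is no third ideal $\cK''$ complicating the intersection. The only point requiring a little care is verifying that the indicated coordinate changes do simultaneously monomialize both ideals in each pair; this is routine.
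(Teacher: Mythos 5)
Your proof is correct and follows exactly the paper's (one-line) argument: change coordinates so that $\cK_i$ and $\cK'_i$ become monomial ideals in disjoint sets of variables, after which the identity $(\cK_i\cap\cK'_i)^d=\cK_i^d\cdot(\cK'_i)^d=\cK_i^d\cap(\cK'_i)^d$ is an elementary fact about monomial ideals. You have simply supplied the routine details that the paper leaves to the reader.
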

\begin{proof}
It is sufficient to notice that after a suitable change of coordinates $\cK_{i}$ and $\cK'_{i}$ are monomial ideals generated by independent variables.
\end{proof}

\begin{lemma}\label{lemma:intersection-gens-Z2wr}
We have $\cK_{0}\cap \cK'_{0}=(x_{1}x_{2}, x_{1}x_{4}, x_{2}x_{3},x_{3}x_{4})$ and $\cK_{2}\cap \cK'_{2} = ((x_{1}-x_{2})(x_{1} + x_{2}), (x_{3} - x_{4})(x_{3}+x_{4}), (x_{1} - x_{2})(x_{3} + x_{4}), (x_{3} - x_{4})(x_{1} + x_{2}))$. In particular, both ideals $\cK_{i}\cap \cK'_{i}$, $i=0,2$ are generated by elements from~$\Pcal$. Consequently
\begin{equation*}
(\cK_{i}\cap \cK'_{i})^{d}\cap \Pcal = (\cK_{i}\cap \cK'_{i}\cap \Pcal)^{d}.
\end{equation*}
\end{lemma}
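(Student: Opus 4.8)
The statement asserts two things for each $i=0,2$: first, that $\cK_i\cap\cK'_i$ is generated by the four listed quadrics (which are visibly elements of $\Pcal=\CC[V]^{H}$, since $H=\langle-\id\rangle$ and the listed generators are all of even degree), and second, that powers commute with the operation of intersecting with $\Pcal$. The plan is to treat the two cases by a common argument after a linear change of coordinates. For $i=0$ the ideals $\cK_0=(x_1,x_3)$ and $\cK'_0=(x_2,x_4)$ are already monomial in disjoint variables, so $\cK_0\cap\cK'_0$ is the monomial ideal generated by the four products $x_\mu x_\nu$ with $\mu\in\{1,3\}$, $\nu\in\{2,4\}$; for $i=2$ one substitutes $u_1=x_1-x_2$, $u_2=x_3-x_4$, $v_1=x_1+x_2$, $v_2=x_3+x_4$, so that $\cK_2=(u_1,u_2)$ and $\cK'_2=(v_1,v_2)$ become monomial in disjoint variables and the same computation applies. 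This gives the first assertion directly.

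\textbf{The second assertion.} For this I would use the general fact, established already in the paper for $S_3$ (Lemma~\ref{lemma:powers-and-intersections-S3} and the second part of Lemma~\ref{lemma:intersection-gens-S3}) and which here is easier: if $\mathfrak a$ is an ideal generated by elements of $\Pcal$, then $\mathfrak a^d\cap\Pcal=(\mathfrak a\cap\Pcal)^d$. One inclusion is trivial. For the other, take $f\in\cK_i^d\cap(\cK'_i)^d\cap\Pcal$; by Lemma~\ref{lemma:powers-and-intersections-Z2wr} we have $f\in(\cK_i\cap\cK'_i)^d$, so we may write $f=\sum p_1^{j_1}\cdots p_4^{j_4}\,\alpha_{j_1,\dots,j_4}$ with $j_1+\cdots+j_4=d$, where $p_1,\dots,p_4$ are the four listed generators and $\alpha_{j_1,\dots,j_4}\in\CC[V]$. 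Since the $p_k$ lie in $\Pcal$ and $f=\tfrac{1}{|H|}\sum_{g\in H}gf$, averaging over $H=[G,G]$ replaces each $\alpha_{j_1,\dots,j_4}$ by its average $\overline{\alpha}_{j_1,\dots,j_4}\in\Pcal$, and each summand $p_1^{j_1}\cdots p_4^{j_4}\,\overline{\alpha}_{j_1,\dots,j_4}$ now belongs to $(\cK_i\cap\cK'_i\cap\Pcal)^d$. This is verbatim the averaging trick used in the proof of Lemma~\ref{lemma:intersection-gens-S3}, only with the simpler group $H\cong\ZZ_2$.

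\textbf{Main obstacle.} There is essentially no serious obstacle here; the only point requiring a little care is verifying that $\cK_i\cap\cK'_i$ is exactly the ideal generated by the four quadrics, i.e. that there are no further generators in higher degree. In the monomial (post-change-of-coordinates) picture this is the elementary statement that the intersection of two monomial ideals generated by disjoint sets of variables is generated by the pairwise products, which needs no computer check; alternatively one verifies it by a one-line Macaulay2 computation as the authors do elsewhere. Once the generators are pinned down, the passage from $(\cK_i\cap\cK'_i)^d$ to $(\cK_i\cap\cK'_i\cap\Pcal)^d$ is purely formal via the averaging argument above, so I would present the proof as: (i) change coordinates to make both ideals monomial in disjoint variables; (ii) read off the generators of the intersection and note they lie in $\Pcal$; (iii) combine Lemma~\ref{lemma:powers-and-intersections-Z2wr} with the Reynolds-operator averaging over $H$ to obtain the displayed identity.
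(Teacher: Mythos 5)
Your proof is correct and follows essentially the same route as the paper: the authors also obtain the generators by observing that $\cK_i$ and $\cK'_i$ become (or already are) monomial ideals in independent variables after a linear change of coordinates, and they prove the identity $(\cK_i\cap\cK'_i)^d\cap\Pcal=(\cK_i\cap\cK'_i\cap\Pcal)^d$ by the same Reynolds-operator averaging over $H=[G,G]$ used in Lemma~\ref{lemma:intersection-gens-S3}. Your added observation that the four quadrics lie in $\Pcal$ simply because they have even degree and $H=\langle-\id\rangle$ is a nice explicit justification of a point the paper leaves implicit.
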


\begin{proof}
The description of $\cK_{0}\cap \cK'_{0}$ is obvious and for $\cK_2\cap \cK'_2$ it follows easily because after a change of coordinates $\cK_2$ and $\cK'_2$ become monomial ideals generated in independent variables. The proof of the second part is based on the fact that $\cP$ is the ring of invariants of $[G,G]$, as in Lemma~\ref{lemma:intersection-gens-S3}.
\end{proof}

Define $\wt{\cK}_{i} = \kappa^{-1}(\cK_{i})$, $\wt{\cK}'_{i} = \kappa^{-1}(\cK'_{i})$ for $i=0,2$, and let $\cI = \ker \kappa$.

\begin{corollary}\label{corollary:preimage-of-intersection-Z2wr}
For $i=0,2$ Lemma~\ref{lemma:intersection-gens-Z2wr} gives
\begin{equation*}
\kappa^{-1}((\cK_{i}\cap \cK'_{i})^{d}) = (\wt{\cK}_{i}\cap \wt{\cK}'_{i})^{d} + \cI.
\end{equation*}
\end{corollary}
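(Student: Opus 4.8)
The plan is to mimic the proof of Corollary~\ref{corollary:preimage-of-intersection-S3} almost verbatim; the only structural difference is that each conjugacy class of symplectic reflections contributes two fixed-point ideals, $\cK_{i}$ and $\cK'_{i}$, rather than three as in the $S_{3}$ case.

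First I would record the elementary facts about~$\kappa$. Since $\kappa$ is an epimorphism onto~$\Pcal$, for any ideals $A,B\triangleleft \Pcal$ we have $\kappa^{-1}(AB) = \kappa^{-1}(A)\,\kappa^{-1}(B) + \cI$, and hence by iteration $\kappa^{-1}(A^{d}) = (\kappa^{-1}(A))^{d} + \cI$. Moreover $\kappa^{-1}$ commutes with finite intersections, and $\kappa^{-1}(S) = \kappa^{-1}(S\cap \Pcal)$ for any subset $S\subseteq \CC[V]$ because $\im \kappa = \Pcal$; in particular
$\kappa^{-1}(\cK_{i}\cap \cK'_{i}\cap \Pcal) = \kappa^{-1}(\cK_{i}\cap \Pcal)\cap \kappa^{-1}(\cK'_{i}\cap \Pcal) = \wt{\cK}_{i}\cap \wt{\cK}'_{i}$.

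With these in hand the proof is the chain of equalities
\begin{multline*}
\kappa^{-1}\bigl((\cK_{i}\cap \cK'_{i})^{d}\bigr)
= \kappa^{-1}\bigl((\cK_{i}\cap \cK'_{i})^{d}\cap \Pcal\bigr)
= \kappa^{-1}\bigl((\cK_{i}\cap \cK'_{i}\cap \Pcal)^{d}\bigr) = \\
= \bigl(\kappa^{-1}(\cK_{i}\cap \cK'_{i}\cap \Pcal)\bigr)^{d} + \cI
= (\wt{\cK}_{i}\cap \wt{\cK}'_{i})^{d} + \cI,
\end{multline*}
where the first equality uses $\im\kappa = \Pcal$, the second is the last assertion of Lemma~\ref{lemma:intersection-gens-Z2wr} (that $\cK_{i}\cap \cK'_{i}$ is generated by elements of~$\Pcal$, so intersecting with~$\Pcal$ commutes with taking powers), the third is the iterated product formula for~$\kappa^{-1}$ recorded above, and the last is the identification from the previous paragraph.

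I do not expect any genuine obstacle here: the content has been entirely pushed into Lemma~\ref{lemma:intersection-gens-Z2wr}, whose proof reduces --- after a linear change of coordinates turning $\cK_{i},\cK'_{i}$ into monomial ideals in disjoint variables --- to the monomial-ideal case together with the averaging-over-$[G,G]$ argument already used in Lemma~\ref{lemma:intersection-gens-S3}. The only point to watch is which ring each ideal lives in: $\cK_{i},\cK'_{i}\triangleleft\CC[V]$, their intersections with~$\Pcal$ are ideals of~$\Pcal$, and the tildes denote the corresponding preimages in $\CC[w_{ij}\colon 0\le i<j\le 4]$; this is bookkeeping rather than mathematics.
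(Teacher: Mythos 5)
Your proof is correct and is essentially the paper's own argument: the paper proves this corollary exactly as it proves Corollary~\ref{corollary:preimage-of-intersection-S3}, namely by inserting $\cap\,\Pcal$ (harmless since $\im\kappa\subseteq\Pcal$), applying the last assertion of Lemma~\ref{lemma:intersection-gens-Z2wr} to commute the power with $\cap\,\Pcal$, and then using surjectivity of $\kappa$ to pull the power outside the preimage up to adding $\cI$.
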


Finally, the following lemma is proved by a computation in Macaulay2,~\cite{M2}.
\begin{lemma}\label{lemma:intersection-of-preimages-Z2wr}
For $i=0,2$ we have $\cJ_{i} + \cI = \wt{\cK}_{i}\cap \wt{\cK}'_{i}$.
\end{lemma}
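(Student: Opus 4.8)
The plan is to bypass the direct Macaulay2 computation and deduce the equality from Lemma~\ref{lemma:intersection-gens-Z2wr} together with the fact that $\Pcal = \CC[V]^{H}$ carries a Reynolds operator. The inclusion $\cJ_{i}+\cI \subseteq \wt{\cK}_{i}\cap \wt{\cK}'_{i}$ is the easy direction: since $\cI = \ker\kappa$ is contained in $\kappa^{-1}$ of every ideal, it suffices to check $\kappa(\cJ_{i})\subseteq \cK_{i}\cap \cK'_{i}$, i.e. that each $\phi_{kl}$ with $w_{kl}$ among the chosen generators of $\cJ_{i}$ lies in $\cK_{i}\cap\cK'_{i}$. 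Reading off Table~\ref{table:generatorsZ2wr} against the explicit quadratic generators of $\cK_{i}\cap\cK'_{i}$ from Lemma~\ref{lemma:intersection-gens-Z2wr}, one sees these four $\phi_{kl}$ are $\CC$-linear combinations of those four quadrics; this is a finite linear-algebra verification (for $i=0$ the quadrics are $x_{1}x_{2},x_{1}x_{4},x_{2}x_{3},x_{3}x_{4}$, whose span equals $\span_{\CC}\{\phi_{01},\phi_{02},\phi_{03},\phi_{04}\}$, and symmetrically for $i=2$ one matches $\span_{\CC}\{\phi_{02},\phi_{12},\phi_{23},\phi_{24}\}$ with the span of the four generators of $\cK_{2}\cap\cK'_{2}$).

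For the reverse inclusion write $\mathfrak{q}_{i}\subseteq \Pcal$ for the $\CC$-span of the four quadratic generators of $\cK_{i}\cap\cK'_{i}$ from Lemma~\ref{lemma:intersection-gens-Z2wr} (they lie in $\Pcal$ by that lemma), so that $\cK_{i}\cap\cK'_{i} = \mathfrak{q}_{i}\cdot\CC[V]$ as an ideal of $\CC[V]$ and, by the linear-algebra identity just used, $\mathfrak{q}_{i}\cdot\Pcal = \kappa(\cJ_{i})$ as an ideal of $\Pcal$. First I would note that
$$\wt{\cK}_{i}\cap\wt{\cK}'_{i} = \kappa^{-1}(\cK_{i})\cap\kappa^{-1}(\cK'_{i}) = \kappa^{-1}(\cK_{i}\cap\cK'_{i}) = \kappa^{-1}\big((\cK_{i}\cap\cK'_{i})\cap\Pcal\big),$$
the last step because $\kappa$ is surjective onto $\Pcal$. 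Next, since the generators of $\mathfrak{q}_{i}$ are $H$-invariant, the $\Pcal$-linear Reynolds operator $R\colon\CC[V]\to\Pcal$ applied to any $f=\sum_{j}q_{j}g_{j}\in\Pcal$ with $q_{j}\in\mathfrak{q}_{i}$, $g_{j}\in\CC[V]$ gives $f=R(f)=\sum_{j}q_{j}R(g_{j})$ with $R(g_{j})\in\Pcal$; hence $(\cK_{i}\cap\cK'_{i})\cap\Pcal = \mathfrak{q}_{i}\cdot\CC[V]\cap\Pcal = \mathfrak{q}_{i}\cdot\Pcal = \kappa(\cJ_{i})$. Combining, $\wt{\cK}_{i}\cap\wt{\cK}'_{i} = \kappa^{-1}(\kappa(\cJ_{i})) = \cJ_{i}+\ker\kappa = \cJ_{i}+\cI$, uniformly in $i=0,2$.

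The steps are individually routine; the only point needing care is the Reynolds-operator argument, which is the mechanism ensuring that intersecting an ideal of $\CC[V]$ generated by $H$-invariants with the subring $\Pcal$ produces nothing beyond the $\Pcal$-ideal generated by those same invariants — this is exactly where the hypothesis that $H=[G,G]$ acts on $V$ (here by $\pm\id$) enters. Alternatively, and this is presumably the route indicated in the statement, one verifies the ideal equality directly in Macaulay2: compute $\wt{\cK}_{i}=\kappa^{-1}(\cK_{i})$, $\wt{\cK}'_{i}=\kappa^{-1}(\cK'_{i})$ and $\cI=\ker\kappa$ by elimination, intersect, and check the result agrees with $\cJ_{i}+\cI$. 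Either way the computation is the same for $i=0$ and $i=2$, and together with Lemma~\ref{lemma:powers-and-intersections-Z2wr} and Corollary~\ref{corollary:preimage-of-intersection-Z2wr} this yields Lemma~\ref{lemma:lifting-valuations-separatedlyZ2wr}, the weak form of condition~\eqref{assumption:lifting-valuations} for $\ZZ_{2}\wr S_{2}$.
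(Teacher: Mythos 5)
Your argument is correct, but it is genuinely different from what the paper does: the paper's proof of this lemma is a one-line appeal to a Macaulay2 computation (compute $\wt{\cK}_{i}$, $\wt{\cK}'_{i}$, $\cI$ by elimination and compare), whereas you derive the equality by hand. The two ingredients you use both check out. First, the linear-algebra observation that $\span_{\CC}\{\phi_{01},\phi_{02},\phi_{03},\phi_{04}\}=\span_{\CC}\{x_{1}x_{2},x_{1}x_{4},x_{2}x_{3},x_{3}x_{4}\}$ and $\span_{\CC}\{\phi_{02},\phi_{12},\phi_{23},\phi_{24}\}=\span_{\CC}\{x_{1}^{2}-x_{2}^{2},\,x_{3}^{2}-x_{4}^{2},\,x_{1}x_{3}-x_{2}x_{4},\,x_{1}x_{4}-x_{2}x_{3}\}$ is immediate from Table~\ref{table:generatorsZ2wr} and Lemma~\ref{lemma:intersection-gens-Z2wr}, giving $\kappa(\cJ_{i})=\mathfrak{q}_{i}\cdot\Pcal$. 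Second, the Reynolds-operator step $(\cK_{i}\cap\cK'_{i})\cap\Pcal=\mathfrak{q}_{i}\cdot\Pcal$ is sound (the generators are quadrics, hence invariant under $H=\{\pm\id\}$, and averaging over $H$ is $\Pcal$-linear); in fact this is exactly the averaging trick the paper itself uses in Lemmas~\ref{lemma:intersection-gens-S3} and~\ref{lemma:intersection-gens-Z2wr}, so you are really just pushing the paper's own technique one step further, from the statement about powers to the $d=1$ statement combined with $\kappa^{-1}(\kappa(\cJ_{i}))=\cJ_{i}+\cI$. What your route buys is a computer-free, verifiable proof that also makes transparent \emph{why} the equality holds (the chosen $\phi_{ij}$ span precisely the degree-two part of $\cK_{i}\cap\cK'_{i}$); what the paper's route buys is brevity and uniformity with the other computational verifications in the section. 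Note also that your argument quietly reproves Corollary~\ref{corollary:preimage-of-intersection-Z2wr} for $d=1$ along the way, which is harmless but worth being aware of when assembling the pieces into Lemma~\ref{lemma:lifting-valuations-separatedlyZ2wr}.
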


Summing up, we may lift homogeneous elements of~$\cP$ compatibly with one valuation.

\begin{lemma}\label{lemma:lifting-valuations-separatedlyZ2wr}
Let $\wt{\nu}_{i}$ be the monomial valuation of $\wt{T}_{i}, \ i=0,2$. Then, for $i = 0,2$ and every homogeneous $f\in \Pcal$ there exists $\wt{f}\in \CC[w_{ij}\colon 0\leq i<j \leq 4]$ such that $\kappa(\wt{f}) = f$ and $\nu_{i}(f) = \wt{\nu}_{i}(\wt{f})$.
\end{lemma}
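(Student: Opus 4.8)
The plan is to fix $i \in \{0,2\}$ and mimic the argument given for $S_3$ in section~\ref{section_lifting_S3}, with the simplification that now each conjugacy class of symplectic reflections has only two representatives, so there are only two ideals $\cK_i$, $\cK'_i$ to intersect rather than three. First I would reformulate the desired conclusion in terms of ideals. By Lemma~\ref{lemma:valuations-vs-ideals-Z2wr}, $\nu_i(f) \ge d$ means $f \in \cK_i^d$, and by Lemma~\ref{lemma_representatives_intersection}, for homogeneous $f$ this forces $f \in \cK_i^d \cap (\cK'_i)^d$; similarly $\wt\nu_i(\wt f) \ge d$ means $\wt f \in \cJ_i^d$. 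As in Remark~\ref{lemma:lifting-valuationsS3}, since the reverse inequality $\nu_i(f) \ge \wt\nu_i(\wt f)$ is automatic (Remark~\ref{remark:valuations-of-generators}), it suffices to prove for every $d$ the inclusion
\begin{equation*}
\kappa^{-1}\!\left(\cK_i^d \cap (\cK'_i)^d\right) \subseteq \cJ_i^d + \cI,
\end{equation*}
because given homogeneous $f$ with $\nu_i(f) = d$, any preimage under $\kappa$ of $f$ then lies in $\cJ_i^d + \cI$, so it can be corrected modulo $\cI$ to land in $\cJ_i^d$, yielding a lift $\wt f$ with $\wt\nu_i(\wt f) \ge d$, hence $= d$.

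Next I would assemble the chain of already-established equalities. By Lemma~\ref{lemma:powers-and-intersections-Z2wr} the left-hand side ideal upstairs equals $\kappa^{-1}((\cK_i \cap \cK'_i)^d)$. Then exactly as in Corollary~\ref{corollary:preimage-of-intersection-S3}, using that $\kappa$ is surjective onto $\cP$ (so $\kappa^{-1}(AB) = \kappa^{-1}(A)\kappa^{-1}(B) + \cI$) together with the fact from Lemma~\ref{lemma:intersection-gens-Z2wr} that $\cK_i \cap \cK'_i$ is generated by elements of $\cP$ and satisfies $(\cK_i \cap \cK'_i)^d \cap \cP = (\cK_i \cap \cK'_i \cap \cP)^d$, one gets
\begin{equation*}
\kappa^{-1}\!\left((\cK_i \cap \cK'_i)^d\right) = \left(\wt{\cK}_i \cap \wt{\cK}'_i\right)^d + \cI,
\end{equation*}
which is precisely Corollary~\ref{corollary:preimage-of-intersection-Z2wr}. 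Finally, Lemma~\ref{lemma:intersection-of-preimages-Z2wr} gives $\wt{\cK}_i \cap \wt{\cK}'_i = \cJ_i + \cI$, and since $\cI$ is an ideal, $(\cJ_i + \cI)^d + \cI = \cJ_i^d + \cI$. Combining the three displays yields the required inclusion, and in fact an equality, so the lift can be chosen with $\wt\nu_i(\wt f) = \nu_i(f)$ on the nose.

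The genuinely substantive inputs are Lemmas~\ref{lemma:intersection-gens-Z2wr} and~\ref{lemma:intersection-of-preimages-Z2wr}: the explicit generators of $\cK_i \cap \cK'_i$ by invariant quadrics, and the Macaulay2 verification that pulling these back reproduces $\cJ_i$ up to $\cI$. The only conceptual step I expect to require care is the passage $(\cK_i \cap \cK'_i)^d \cap \cP = (\cK_i \cap \cK'_i \cap \cP)^d$ — the $\supseteq$ direction is trivial, and for $\supseteq$... rather $\subseteq$ one argues as in Lemma~\ref{lemma:intersection-gens-S3}: write an element of the left side in terms of the listed invariant generators of $\cK_i \cap \cK'_i$ with arbitrary polynomial coefficients, then average over $[G,G]=\{\pm\id\}$; since the generators are $[G,G]$-invariant, averaging pushes the coefficients into $\cP$ without leaving the ideal, placing the element in $(\cK_i\cap\cK'_i\cap\cP)^d$. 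Everything else is bookkeeping with preimages of products of ideals under the surjection $\kappa$. I do not expect a hard obstacle here, precisely because the three-ideal complication of the $S_3$ case (which forced the delicate ``powers of intersections versus intersections of powers'' induction of Lemma~\ref{lemma:powers-and-intersections-S3}) is absent: with two ideals generated in complementary sets of variables after a linear change of coordinates, $(\cK_i \cap \cK'_i)^d = \cK_i^d \cap (\cK'_i)^d$ is immediate (Lemma~\ref{lemma:powers-and-intersections-Z2wr}).
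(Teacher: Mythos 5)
Your proposal is correct and follows essentially the same route as the paper: the proof there is exactly the chain $\kappa^{-1}(\cK_i^d \cap (\cK'_i)^d) = \kappa^{-1}((\cK_i\cap \cK'_i)^d) = (\wt{\cK}_i \cap \wt{\cK}'_i)^d + \cI = (\cJ_i + \cI)^d + \cI = \cJ_i^d + \cI$, assembled from Lemmas~\ref{lemma_representatives_intersection}, \ref{lemma:powers-and-intersections-Z2wr}, Corollary~\ref{corollary:preimage-of-intersection-Z2wr} and Lemma~\ref{lemma:intersection-of-preimages-Z2wr}. Your additional remarks (correcting the preimage modulo $\cI$, and the automatic reverse inequality giving equality of valuations) are the same implicit steps the paper relies on.
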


\begin{proof}
Assume that $f \in \cK_i^d$. Then by Lemma~\ref{lemma_representatives_intersection} $f \in \cK_i^d\cap \cK_i'^d$, and by Corollary~\ref{corollary:preimage-of-intersection-Z2wr} and Lemma~\ref{lemma:intersection-of-preimages-Z2wr} we have
$$\kappa^{-1}(\cK_i^d \cap (\cK'_i)^d) = \kappa^{-1}((\cK_i\cap \cK'_i)^d) = (\widetilde{\cK}_i \cap \widetilde{\cK}'_i)^d + \cI = (\cJ_i + \cI)^d + \cI = \cJ_i^d + \cI.$$
\end{proof}

\subsection{The ideal of an embedding of {$\Spec \Rcal$} in an affine space}
\label{section_D8_ideal}

We are now going to describe the ideal of an embedding $\Spec \Rcal \hookrightarrow \CC^{12}$ and the relation with an analogous embedding constructed in case of the group of order~$32$ considered in~\cite{cox_resolutions}. 

To embed $\Spec \cR$ in $\bC^{12}$ we consider a surjective homomorphism
$$\Psi\colon \CC[w_{ij},u_{0},u_{2}\colon 0\leq i < j \leq 4] \to \Rcal$$ which sends $w_{ij}$to $\overline{\phi}_{ij}$, listed in Remark~\ref{remark_def_phi_ij}, and $u_k$ to $t_k^{-2}$ for $k = 0,2$.

If~$\Rcal'$ is the subring of $\Pcal[t_{0}^{\pm 1},\ldots, t_{4}^{\pm 1}]$ generated by $\phi_{ij}t_{i}t_{j}$, $0 \leq i < j \leq 4$ and $t_{k}^{-2}$, $k=0,\ldots,4$ then as in~\cite[Prop.~3.17]{cox_resolutions} we may consider a closed embedding $\Spec \Rcal'\to \CC^{15}$. It is given by a surjective ring homomorphism $$\Phi\colon \CC[w_{ij}, u_{k}\colon k=0,\ldots,4, 0\leq i < j \leq 4] \to \Rcal',$$ which sends $w_{ij}$ to $\phi_{ij}t_{i}t_{j}$ and $u_{k}$ to $t_{k}^{-2}$.

Note that $\Psi$ is the composition of $\Phi$ with the map
$\Pcal[t_{0}^{\pm 1},\ldots, t_{4}^{\pm 1}]\to \Pcal[t_{0}^{\pm 1},t_{2}^{\pm 1}]$ such that $t_{k}$ is mapped to 1 for $k = 1,3,4$ and to $t_k$ for $k=0,2$. Thus we obtain a commutative diagram of closed embeddings
\begin{equation}\label{diagram:caseD8-vs-caseBS}
\begin{CD}
\Spec \Rcal @>>> \CC^{12}\\
@VVV @VVV\\
\Spec \Rcal' @>>> \CC^{15}
\end{CD}
\end{equation}
where $\CC^{12}$ is an affine subspace of $\CC^{15}$ given by $u_{1} = u_{3} = u_{4} = 1$.

\begin{proposition}\label{proposition:ideal-of-SpecR}
The kernel of $\Psi$, i.e. the ideal of the embedding $\Spec \cR \hookrightarrow \bC^{12}$, is generated by the following polynomials:
$$\begin{array}{ll}
w_{14}w_{23} + w_{13}w_{24} - w_{12}w_{34} & w_{04}w_{23} - w_{03}w_{24} - w_{02}w_{34}\\ 
w_{04}w_{13} + w_{03}w_{14} - w_{01}w_{34} & w_{04}w_{12} - w_{02}w_{14} - w_{01}w_{24}\\
w_{03}w_{12} + w_{02}w_{13} - w_{01}w_{23} & \\
w_{02}w_{12}u_{2} - w_{03}w_{13} + w_{04}w_{14} & w_{01}w_{14} - w_{02}w_{24}u_{2} + w_{03}w_{34}\\
w_{01}w_{13} + w_{02}w_{23}u_{2} + w_{04}w_{34} & w_{01}w_{12} + w_{03}w_{23} + w_{04}w_{24}\\
w_{03}w_{04}u_{0} - w_{13}w_{14} + w_{23}w_{24}u_{2} & w_{02}w_{04}u_{0} + w_{12}w_{14} + w_{23}w_{34}\\
w_{01}w_{04}u_{0} + w_{12}w_{24}u_{2} + w_{13}w_{34} & w_{02}w_{03}u_{0} - w_{12}w_{13} - w_{24}w_{34}\\
w_{01}w_{03}u_{0} + w_{12}w_{23}u_{2} + w_{14}w_{34} & w_{01}w_{02}u_{0} + w_{13}w_{23} - w_{14}w_{24}\\
w^2_{02}u_{0} + w^{2}_{12} + w^{2}_{23} + w^{2}_{24} & w^{2}_{03}u_{0} + w^{2}_{13} + w^{2}_{23}u_{2} + w^{2}_{34} \\
w^{2}_{01} + w^{2}_{02}u_{2} + w^{2}_{03} + w^{2}_{04} & w^{2}_{04}u_{0} + w^{2}_{14} + w^{2}_{24}u_{2} + w^{2}_{34}\\
w^{2}_{01}u_{0} + w^{2}_{12}u_{2} + w^{2}_{13} + w^{2}_{14}. & \\
\end{array}$$

Note that according to~\cite[Prop.~3.17]{cox_resolutions} these polynomials are exactly the generators of $\ker \Phi$ after substituting $u_{i}\mapsto 1$ for $i=1,3,4$. In particular, $\Spec \Rcal = \CC^{12}\cap \Spec \Rcal'$.
\end{proposition}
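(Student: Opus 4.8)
The plan is to exploit the commutative diagram~\eqref{diagram:caseD8-vs-caseBS}. We already know from~\cite[Prop.~3.17]{cox_resolutions} that $\ker\Phi$ is generated by the explicitly listed polynomials (before the substitution). Write $\pi\colon \CC[w_{ij},u_k \colon k = 0,\ldots,4] \to \CC[w_{ij},u_0,u_2]$ for the quotient map setting $u_1=u_3=u_4=1$, so that $\Psi = \pi' \circ \Phi$ in the sense compatible with the diagram, and let $\rho\colon \Pcal[t_0^{\pm1},\ldots,t_4^{\pm1}] \to \Pcal[t_0^{\pm1},t_2^{\pm1}]$ be the map sending $t_k \mapsto 1$ for $k=1,3,4$. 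First I would record the purely formal fact that, writing $J \subset \CC[w_{ij},u_0,u_2]$ for the ideal generated by the images under $\pi$ of the given generators of $\ker\Phi$, one has $J \subseteq \ker\Psi$: this is immediate because $\Psi$ factors through $\Phi$ followed by the substitution $u_1=u_3=u_4=1$, and each listed relation maps to $0$ in $\Rcal'$, hence its image maps to $0$ in $\Rcal$. The content of the proposition is therefore the reverse inclusion $\ker\Psi \subseteq J$.

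For the reverse inclusion the idea is to lift. Given $f \in \ker\Psi \subset \CC[w_{ij},u_0,u_2]$, regard it inside $\CC[w_{ij},u_0,\ldots,u_4]$ via the inclusion of polynomial rings that is a section of $\pi$ (i.e. $f$ viewed as a polynomial not involving $u_1,u_3,u_4$). I claim $f \in \ker\Phi$. Indeed, $\Phi(f)$ is an element of $\Rcal' \subset \Pcal[t_0^{\pm1},\ldots,t_4^{\pm1}]$, and applying $\rho$ to it gives $\Psi(f) = 0$ by the factorization in the diagram. Since $f$ does not involve $u_1,u_3,u_4$, the element $\Phi(f)$ is built from $\phi_{ij}t_it_j$ and $t_0^{-2},t_2^{-2}$ only; I must check that $\rho$ is injective on the subring of $\Pcal[t_0^{\pm1},\ldots,t_4^{\pm1}]$ generated by these elements. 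But $\rho$ simply forgets the variables $t_1,t_3,t_4$, and a monomial $\phi \cdot t_0^{a_0}t_1^{a_1}\cdots t_4^{a_4}$ appearing in $\Phi(f)$ has its $t_1,t_3,t_4$-exponents determined by the $\Cl(X)$-multigrading together with the $w$-monomial it comes from; more concretely one checks on the generators that the $(t_1,t_3,t_4)$-degree is a fixed linear function of the $(t_0,t_2)$-degree and the underlying $\Pcal$-element, so no cancellation is introduced by $\rho$. Hence $\Phi(f)=0$, i.e. $f \in \ker\Phi$, so $f$ lies in the ideal generated by the original relations; reducing modulo $u_1-1,u_3-1,u_4-1$ (which is what $\pi$ does, and which is harmless since $f$ already avoids those variables) shows $f \in J$.

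The remaining assertion $\Spec\Rcal = \CC^{12}\cap\Spec\Rcal'$ then follows: set-theoretically and scheme-theoretically, $\CC^{12}\cap\Spec\Rcal'$ is cut out in $\CC^{12}$ by $\pi(\ker\Phi)$, which is exactly $J = \ker\Psi$, the ideal of $\Spec\Rcal$.

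I expect the main obstacle to be the injectivity step in the second paragraph — verifying that $\rho$ does not collapse distinct graded pieces of the subring generated by $\phi_{ij}t_it_j$ and $t_k^{-2}$. One clean way to handle it is to observe that $\Rcal'$ is graded by $\bigoplus_{k=0}^4 \ZZ[L_k] = \Cl$ of the ambient toric-type data for the $32$-element group, and that the $(t_1,t_3,t_4)$-exponents of any monomial of $\Rcal'$ are determined by its $\Pcal$-part and its $(t_0,t_2)$-exponents via the explicit weight matrix from~\cite[Prop.~3.17]{cox_resolutions}; thus $\rho$ restricted to $\Rcal'$ is injective and $\rho(\Rcal') = \Rcal$. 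Everything else is either formal diagram-chasing or a direct appeal to the already-cited computation of $\ker\Phi$.
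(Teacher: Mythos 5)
Your first paragraph (the inclusion $J\subseteq\ker\Psi$) and the final deduction are fine, but the central claim of your second paragraph --- that $\rho$ is injective on the subring of $\Pcal[t_0^{\pm1},\ldots,t_4^{\pm1}]$ generated by the $\phi_{ij}t_it_j$ and $t_0^{-2},t_2^{-2}$ --- is false, so the inclusion $\ker\Psi\subseteq J$ is not established. The $(t_1,t_3,t_4)$-exponents of a monomial of $\Rcal'$ are \emph{not} determined by its $\Pcal$-part together with its $(t_0,t_2)$-exponents: the $\phi_{ij}$ span all quadrics on $\CC^4$, hence satisfy many relations, and products with genuinely different index patterns can combine in $\Pcal$. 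A counterexample is furnished by one of the very generators in the statement: for $f=w_{01}w_{03}u_0+w_{12}w_{23}u_2+w_{14}w_{34}$, which involves none of $u_1,u_3,u_4$, one computes
\begin{equation*}
\Phi(f)=\bigl(\phi_{01}\phi_{03}+\phi_{12}\phi_{23}\bigr)\,t_1t_3+\phi_{14}\phi_{34}\,t_1t_3t_4^{2},
\end{equation*}
which is nonzero (a direct check gives $\phi_{01}\phi_{03}+\phi_{12}\phi_{23}=-\phi_{14}\phi_{34}\neq0$), even though $\rho(\Phi(f))=\Psi(f)=0$. The corresponding element of $\ker\Phi$ in \cite[Prop.~3.17]{cox_resolutions} is $w_{01}w_{03}u_0+w_{12}w_{23}u_2+w_{14}w_{34}u_4$ --- this is precisely why the statement speaks of \emph{substituting} $u_i\mapsto1$ rather than of intersecting $\ker\Phi$ with $\CC[w_{ij},u_0,u_2]$. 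So the naive lift of an element of $\ker\Psi$ need not lie in $\ker\Phi$, and your argument (including the ``clean way to handle it'' at the end, which rests on the same false determination of the $(t_1,t_3,t_4)$-degree) breaks exactly at the point you identified as the main obstacle.

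What would be needed to salvage this route is that every $f\in\ker\Psi$ admits \emph{some} lift lying in $\ker\Phi$, obtained by inserting suitable powers of $u_1,u_3,u_4$ into its monomials; equivalently, that the kernel of $\rho|_{\Rcal'}\colon\Rcal'\to\Rcal$ is exactly the ideal $(t_1^{-2}-1,\,t_3^{-2}-1,\,t_4^{-2}-1)$ and nothing larger. That is a genuine assertion requiring its own proof; it is the alternative argument the paper sketches, while its primary justification of the proposition is a direct elimination computation of $\ker\Psi$ in Macaulay2. As written, your proposal proves only the easy inclusion $J\subseteq\ker\Psi$.
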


\begin{proof}
This can be computed e.g. in Macaulay2,~\cite{M2}. Another way of obtaining the generators of the ideal of $\Spec \cR$ is to observe that the ideal of the closed embedding $\Spec \cR\hookrightarrow \Spec \cR'$ is equal to $(t_{1}^{-2} - 1, t_{3}^{-2}-1, t_{4}^{-2}-1)$ 
and use the generators of the ideal of $\Spec \cR'$ listed in~\cite[Prop.~3.17]{cox_resolutions}.
\end{proof}

\subsection{GIT quotients of $\Spec \cR$ -- linearization and stability}
\label{section_D8_stability}

In the following sections we are going to show that any crepant resolution $X\to V/G$ is a GIT quotient of $\Spec \Rcal$ by an action of a two-dimensional torus $\TT_{\Cl(X)}$, and to describe the central fiber of such a resolution. We start from investigating appropriate linearizations of this action.

Consider the two-dimensional Picard torus $\TT = \TT_{\Cl(X)}$ of $X$, which we identify with a subtorus of $\TT'\cong (\CC^{*})^{5}$ given by $t_{1} = t_{3} = t_{4}=1$. We have a natural action of~$\TT$ on $\Spec \Rcal$ induced by the inclusion $\Rcal \subset \Theta(\Rcal(X))\subset \cP[t_{0}^{\pm 1}, t_{2}^{\pm 1}]$. 

More explicitly, $\TT\cong \{(t_{0},t_{2}) \colon t_{0},t_{2}\in \CC^{*}\}$ acts trivially on $w_{13}, w_{14}, w_{34}$, by character $t_{0}$ on $w_{01}, w_{03}, w_{04}$, by character $t_{2}$ on $w_{12}, w_{23}, w_{24}$, by character $t_{0}t_{2}$ on $w_{02}$, by character $t_{0}^{-2}$ on $u_{0}$ and by character $t_{2}^{-2}$ on $u_{2}$.

\begin{notation}
By $\chi$ we denote the character $\chi^{(2,1)}$ of~$\bT$.
\end{notation}

Our goal is to show the results analogous to \cite[Prop.~4.12, 4.14]{cox_resolutions} for the $\TT$-action on $\Spec \Rcal$ linearized by~$\chi$. In the present section we investigate the (semi)stability for this linearization. The subsequent one explains the smoothness of the geometric quotient of the stable locus. Note that by symmetry the same will be true then for~$\chi^{(1,2)}$.

We describe explicitly the $\chi$-semistable locus of $\Spec \Rcal$, the proof is straightforward.

\begin{lemma}\label{lemma:semistability-for-D8}
The set of $\chi$-semistable points of $\CC^{12}$ is equal to
\begin{equation*}
(\CC^{12})_{\chi}^{ss} = \bigcup_{i=1,3,4}\{w_{02}w_{0i}\neq 0\}\cup \bigcup_{i,j=1,3,4} \{w_{0i}w_{2j}\neq 0\},
\end{equation*}
where we denote $w_{21} := w_{12}$.
\end{lemma}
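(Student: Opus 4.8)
The plan is to compute the $\chi$-semistable locus directly from the Hilbert–Mumford / Gelfand–Kapranov–Zelevinsky description of GIT stability for torus actions on affine space. Since $\bT\cong(\bC^*)^2$ acts linearly on $\bC^{12}$ with the weights listed above (namely $w_{13},w_{14},w_{34}$ have weight $(0,0)$; $w_{01},w_{03},w_{04}$ have weight $(1,0)$; $w_{12},w_{23},w_{24}$ have weight $(0,1)$; $w_{02}$ has weight $(1,1)$; $u_0$ has weight $(-2,0)$; $u_2$ has weight $(0,-2)$), a point $p=(w_{ij},u_0,u_2)$ is $\chi$-semistable, $\chi=(2,1)$, if and only if $(2,1)$ lies in the cone spanned by the weights of the coordinates not vanishing at $p$. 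So the first step is simply to enumerate which subsets of the weight vectors have $(2,1)$ in their positive cone, and translate each such minimal subset into a non-vanishing condition on the corresponding $w$'s and $u$'s.

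First I would fix the convention for linearization so that semistability of $p$ is equivalent to $(2,1)\in\operatorname{Cone}\{\,\text{weight}(z):z(p)\neq 0\,\}$; this is the standard criterion (see e.g. King's description of GIT for torus actions on affine space, or \cite[Sect.~4]{cox_resolutions} in the parallel situation). Then I would argue that $(2,1)$ can only be in such a cone if at least one coordinate of weight $(1,0)$ is nonzero \emph{and} the remaining needed direction is supplied: concretely $(2,1)=(1,0)+(1,1)$ and $(2,1)=2\cdot(1,0)+(0,1)$ are the only ways (up to nonnegative combinations) to write $(2,1)$ using the available rays $(0,0),(1,0),(0,1),(1,1),(-2,0),(0,-2)$ — the negative rays $(-2,0),(0,-2)$ cannot help because any nonnegative combination involving them would need to be compensated, and one checks the cone they would generate together with $(1,0),(0,1),(1,1)$ still forces a positive $(1,0)$-component and a positive $(0,1)$-or-$(1,1)$-component. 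Hence semistability forces some $w_{0i}\neq 0$ ($i\in\{1,3,4\}$) together with either $w_{02}\neq 0$ (giving the first family $\{w_{02}w_{0i}\neq 0\}$) or some $w_{2j}\neq 0$ ($j\in\{1,3,4\}$, with $w_{21}:=w_{12}$, giving the second family $\{w_{0i}w_{2j}\neq 0\}$). Conversely each such open set is contained in the semistable locus because on it the relevant two (or three) weights already positively span $(2,1)$.

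The one point requiring a little care is that GIT semistability is a condition on the affine variety $\Spec\cR\subset\bC^{12}$, not on $\bC^{12}$ itself; but the statement as phrased is about $(\bC^{12})^{ss}_\chi$, so it is purely the combinatorial computation above and there is nothing to check about the ideal of $\Spec\cR$. So the main (and only mild) obstacle is being careful and exhaustive in the cone enumeration — in particular making sure the coordinates $u_0,u_2$ of negative weight genuinely cannot enlarge the semistable locus, and that the weight-$(0,0)$ coordinates $w_{13},w_{14},w_{34}$ are irrelevant to semistability (they contribute the ray $0$, which never helps reach $(2,1)$). Once that enumeration is laid out, the asserted equality $(\bC^{12})^{ss}_\chi=\bigcup_{i=1,3,4}\{w_{02}w_{0i}\neq 0\}\cup\bigcup_{i,j=1,3,4}\{w_{0i}w_{2j}\neq 0\}$ follows immediately, which is why the authors can call the proof straightforward.
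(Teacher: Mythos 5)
Your setup is the right one --- for a linear torus action on an affine space, $\chi$-semistability of a point $p$ is equivalent to $(2,1)$ lying in the rational convex cone spanned by the weights of the coordinates not vanishing at $p$ --- but the cone enumeration, which you yourself single out as the one step requiring care, is not exhaustive. The negative ray $(0,-2)$ \emph{does} help: $(2,1)=2\cdot(1,1)+\tfrac12\cdot(0,-2)$, or integrally, the monomial $w_{02}^{4}u_{2}$ has weight $(4,2)=2\chi$. Consequently the point of $\CC^{12}$ with $w_{02}=u_{2}=1$ and all other coordinates zero is $\chi$-semistable although it lies in none of the sets $\{w_{02}w_{0i}\neq 0\}$, $\{w_{0i}w_{2j}\neq 0\}$. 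A complete enumeration of the minimal subsets of $\{(1,0),(0,1),(1,1),(-2,0),(0,-2)\}$ whose cone contains $(2,1)$ yields exactly three, namely $\{(1,0),(0,1)\}$, $\{(1,0),(1,1)\}$ and $\{(1,1),(0,-2)\}$, so the correct description of the ambient semistable locus is $(\CC^{12})_{\chi}^{ss}=\bigcup_{i}\{w_{02}w_{0i}\neq 0\}\cup\bigcup_{i,j}\{w_{0i}w_{2j}\neq 0\}\cup\{w_{02}u_{2}\neq 0\}$. Your argument therefore establishes only the inclusion $\supseteq$ of the asserted equality; the reverse inclusion fails for $\CC^{12}$ as stated, so no repair of the enumeration alone can close the gap.

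The discrepancy is harmless for everything the lemma is subsequently used for, but that has to be said explicitly. Among the generators of $\ker\Psi$ listed in Proposition~\ref{proposition:ideal-of-SpecR} is $w_{01}^{2}+w_{02}^{2}u_{2}+w_{03}^{2}+w_{04}^{2}$, so on $\Spec\Rcal$ the condition $w_{02}u_{2}\neq 0$ forces $w_{0i}\neq 0$ for some $i\in\{1,3,4\}$; hence $\{w_{02}u_{2}\neq 0\}\cap\Spec\Rcal$ is already contained in $\bigcup_{i}\{w_{02}w_{0i}\neq 0\}$, and the description of $(\Spec\Rcal)_{\chi}^{ss}$ as the intersection of $\Spec\Rcal$ with the union appearing in the lemma is correct. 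You should either add the set $\{w_{02}u_{2}\neq 0\}$ to the right-hand side (and check it changes nothing downstream), or prove the statement for $\Spec\Rcal$ rather than for $\CC^{12}$, invoking the relation above.
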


The next observation may be verified directly, using the above characterization of $\chi$-semistable points.

\begin{proposition}\label{proposition:stability-for-D8}
Every $\chi$-semistable point of $\Spec \cR$ is $\chi$-stable and has trivial isotropy group.
\end{proposition}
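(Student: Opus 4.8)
The plan is to reduce the statement to an explicit finite check on the affine cover of the semistable locus provided by Lemma~\ref{lemma:semistability-for-D8}. First I would recall that, for a torus action on an affine variety linearized by a character~$\chi$, a semistable point $p$ is stable with trivial stabilizer precisely when the orbit $\TT\cdot p$ is closed in the semistable locus and has dimension equal to $\dim\TT = 2$; equivalently, the one-parameter subgroups of~$\TT$ act on a neighbourhood of~$p$ (through the coordinate monomials that are nonzero at~$p$) with weights spanning the full character lattice and with no limit point inside the semistable locus in any direction. So the argument is local on the open cover
\[
(\CC^{12})^{ss}_\chi = \bigcup_{i=1,3,4}\{w_{02}w_{0i}\neq 0\}\cup\bigcup_{i,j=1,3,4}\{w_{0i}w_{2j}\neq 0\}.
\]

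Next I would go through these basic open sets one at a time. On $\{w_{0i}w_{2j}\neq 0\}$ the torus acts on $w_{0i}$ with weight $(1,0)$ and on $w_{2j}$ with weight $(0,1)$, so these two coordinates alone give a $\TT$-equivariant map to $(\CC^*)^2$ whose fibers are acted on trivially; hence on this chart the action is free and all orbits through points with $w_{0i},w_{2j}\ne 0$ are closed of dimension~$2$, giving stability and trivial isotropy. On $\{w_{02}w_{0i}\neq 0\}$ the relevant weights are $(1,1)$ on $w_{02}$ and $(1,0)$ on $w_{0i}$; the matrix $\left(\begin{smallmatrix}1&1\\1&0\end{smallmatrix}\right)$ is unimodular, so again the two coordinates $w_{02},w_{0i}$ already trivialize the torus, the action is free, and orbits are closed $2$-dimensional. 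In each case one must also note that the complement of the chart inside the semistable locus cannot meet the orbit closure — but since the orbit through such a point stays inside the given chart (the nonvanishing coordinates stay nonvanishing under the $\TT$-action, as their weights are the ones being used), this is automatic. Thus every semistable point lies in one of these charts and is stable with trivial stabilizer.

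The one point that requires a little care — and which I expect to be the main (minor) obstacle — is making sure that semistability really does force one of the distinguished coordinate pairs to be nonzero in a way compatible with the weight computation, i.e. that Lemma~\ref{lemma:semistability-for-D8} is being used correctly to cover $\Spec\cR$ and not just $\CC^{12}$. Here one intersects with $\Spec\cR\subset\CC^{12}$ and simply restricts the above local trivializations; since $\Spec\cR$ is $\TT$-invariant and closed, closedness of orbits in $(\CC^{12})^{ss}_\chi$ implies closedness in $(\Spec\cR)^{ss}_\chi$, and freeness is inherited. So no new phenomenon occurs on the subvariety, and the verification is indeed "straightforward" as claimed, amounting to the unimodularity of the two $2\times2$ weight matrices $\left(\begin{smallmatrix}1&0\\0&1\end{smallmatrix}\right)$ and $\left(\begin{smallmatrix}1&1\\1&0\end{smallmatrix}\right)$ together with the observation that the distinguished nonvanishing coordinates remain nonvanishing along their orbits.
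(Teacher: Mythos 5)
Your proof is correct and is essentially the direct verification the paper has in mind (its own ``proof'' is the one-line remark that the claim follows directly from Lemma~\ref{lemma:semistability-for-D8}): on each chart of the semistable cover the two distinguished coordinates have weights forming a unimodular $2\times 2$ matrix, which forces free action and closed orbits there. The only sentence worth tightening is the remark that closedness of the orbit in the ambient semistable locus is ``automatic'' because the orbit stays in the chart -- what one should say is either that closedness in the affine invariant chart $X_f$ is exactly what Mumford's definition of stability requires, or that since every semistable orbit is $2$-dimensional there are no smaller orbits available to lie in an orbit closure.
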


\subsection{The smoothness of GIT quotients}\label{subsection_quotient_smoothness}
\label{section_D8_smoothness} 

We will now show that a crepant resolution~$X$ can be reconstructed as a GIT quotient of $\Spec \Rcal$. To prove the smoothness of the quotient we follow the reasoning from the proof of an analogous result~\cite[Prop.~4.14]{cox_resolutions}. However, our case is simpler, it requires less computations. The main step is showing the smoothness of $\chi$-stable locus of $\Spec \Rcal$. Then the smoothness of its geometric quotient follows from the facts that~$\bT$ acts freely on this set.

\begin{proposition}\label{proposition:smoothness-of-stable-pts-D8}
$\Spec \Rcal$ is smooth in every $\chi$-stable point.
\end{proposition}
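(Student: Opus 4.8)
The plan is to verify smoothness of $\Spec\Rcal$ at each $\chi$-stable point by a direct Jacobian computation, organized by the open cover of the stable locus provided by Lemma~\ref{lemma:semistability-for-D8} together with Proposition~\ref{proposition:stability-for-D8}. Since $\Spec\Rcal$ has dimension equal to $\dim X = 4$ (it is a $\TT$-bundle, up to the GIT identification, over a $4$-dimensional variety; concretely $\dim\Spec\Rcal=\dim\Rcal = \dim\cP + 2 - \dim(\text{generic stabilizer directions})$, which one checks equals $6$, with the $2$-torus acting with finite stabilizers giving $4$-dimensional quotients), it suffices to show that at every $\chi$-stable point the Jacobian of the $17$ generators of $\ker\Psi$ from Proposition~\ref{proposition:ideal-of-SpecR} has rank $12 - 6 = 6$. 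So first I would pin down $\dim\Spec\Rcal$ precisely (e.g.\ via the embedding $\Spec\Rcal = \CC^{12}\cap\Spec\Rcal'$ and the known dimension of $\Spec\Rcal'$ from~\cite{cox_resolutions}, subtracting the three hyperplane conditions $u_1=u_3=u_4=1$), fixing the target rank $\rho$ for the Jacobian.

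Next, on each chart of the cover I would exhibit $\rho$ of the defining equations whose differentials are visibly linearly independent at stable points of that chart. The semistable (hence stable) locus is covered by the opens $\{w_{02}w_{0i}\neq 0\}$ for $i=1,3,4$ and $\{w_{0i}w_{2j}\neq 0\}$ for $i,j\in\{1,3,4\}$. On a chart where two specified $w$-coordinates are invertible, one uses the quadratic relations of ``Plücker-like'' shape (the first five listed relations, e.g.\ $w_{14}w_{23}+w_{13}w_{24}-w_{12}w_{34}$, and their $0$-analogues) to solve for a block of $w_{ij}$'s, and the relations involving $u_0,u_2$ (e.g.\ $w_{03}w_{04}u_0 - w_{13}w_{14}+w_{23}w_{24}u_2$ and $w^2_{02}u_0+w^2_{12}+w^2_{23}+w^2_{24}$) to solve for $u_0,u_2$; on the given chart these have the form (unit)$\cdot$(variable) $+$ (terms not involving that variable), so their differentials are independent. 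This is the routine-but-bookkeeping-heavy part, and I would present it as: ``by the symmetry of the situation it suffices to treat the chart $\{w_{02}w_{01}\neq 0\}$ (respectively $\{w_{01}w_{12}\neq 0\}$), where one checks directly that the following $\rho$ equations have independent differentials,'' listing them, and noting the remaining charts follow by the evident index permutations (the $S_2$-symmetry swapping the roles of $0$ and $2$, and permutations of $\{1,3,4\}$).

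The main obstacle I anticipate is organizing the chart-by-chart Jacobian bookkeeping so that it is genuinely short rather than a brute-force $17\times 12$ minor computation: the key is to choose, on each chart, the ``right'' $\rho$ equations — essentially those that become, after dividing by the invertible coordinates, of the form $x - g(\text{other variables})$ — so that independence of differentials is immediate and no actual determinant needs to be expanded. A secondary point requiring care is making sure the charts listed in Lemma~\ref{lemma:semistability-for-D8} really do cover the stable locus and that on each one the chosen invertible coordinates are among those appearing with a coefficient $1$ in the selected equations. Once smoothness of the $\chi$-stable locus is established, the Proposition is complete; the passage to smoothness of the geometric quotient is then immediate from Proposition~\ref{proposition:stability-for-D8} (free action, so the quotient map is a principal $\TT$-bundle) and is handled in the surrounding text rather than here.
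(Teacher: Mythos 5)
Your plan is sound and rests on the same two ingredients as the paper's proof: the explicit presentation of $\ker\Psi$ from Proposition~\ref{proposition:ideal-of-SpecR} and the description of the ($\chi$-semi)stable locus from Lemma~\ref{lemma:semistability-for-D8}, combined via the Jacobian criterion with target rank $6$. The difference is one of organization. You argue in the forward direction: cover the stable locus by the charts of Lemma~\ref{lemma:semistability-for-D8} and, on each chart, exhibit six equations whose differentials are independent because they can be "solved" for six coordinates with unit coefficients. The paper argues contrapositively: it locates a handful of $6\times 6$ minors of the Jacobian that are \emph{monomials} in the $w_{ij}$, so that singularity forces a cascade of coordinate vanishings ($w_{01}=w_{03}=w_{04}=0$, etc.) which Lemma~\ref{lemma:semistability-for-D8} shows is incompatible with stability. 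The contrapositive version buys a genuinely short case analysis (two cases, $w_{02}\neq 0$ and $w_{02}=0$) with no chart-by-chart bookkeeping, and avoids a pitfall in your triangularization scheme: on the chart $\{w_{01}w_{02}\neq 0\}$ the natural choice of solved variables produces $2\times 2$ blocks such as $\bigl(\begin{smallmatrix} w_{01} & w_{03}\\ w_{03} & -w_{01}\end{smallmatrix}\bigr)$ with determinant $-w_{01}^{2}-w_{03}^{2}$, which can vanish at stable points, so the "unit times variable plus other terms" structure is not automatic and the right six equations must be chosen with care on each chart. Two small corrections: the presentation has $20$ generators, not $17$; and your opening assertion that $\Spec\Rcal$ has dimension $4$ contradicts your own (correct) parenthetical — it is $6$-dimensional, of codimension $6$ in $\CC^{12}$, which is what makes rank $6$ the right target.
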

\begin{proof}

We use the Jacobian matrix~$M$ of the generating set of the ideal $\ker \Psi$ of $\Spec \cR \subset \bC^{12}$ listed in Proposition~\ref{proposition:ideal-of-SpecR}. Its rows correspond to partial derivatives with respect to variables in the following order: 
\begin{equation*}
w_{01},w_{02},w_{03},w_{04},w_{12},w_{13},w_{14},w_{23},w_{24},w_{34},u_{0},u_{2}.
\end{equation*}

As in the proof of~\cite[Prop.~4.14]{cox_resolutions} we look for a $6\times 6$ minors of~$M$ which are monomials. Such minors give a vanishing of coordinates in singular points. We use the description of the set of stable points from Lemma~\ref{lemma:semistability-for-D8}) to conclude that points with certain sets of vanishing coordinates cannot be stable.

We use the following notation: $\det(i_{1},\ldots, i_{6}|j_{1},\ldots,j_{6})$ is the $6\times 6$ minor of Jacobian matrix $M$ of rows $i_{1},\ldots, i_{6}\in \{1,\ldots, 12\}$ and columns $j_{1},\ldots,j_{6}\in \{1,\ldots, 20\}$. By $\Mon(x_{i_{1}},\ldots,x_{i_{k}})$ we denote the set of monomials in variables $x_{i_{1}},\ldots, x_{i_{k}}$.

We start by observing that $\det(4,6,7,9,10,11 | 2,5,6,9,16,18)\in \Mon (w_{02}, w_{04})$ and also $\det(5,6,7,10,11,12 | 2,4,5,9,16,18)\in \Mon (w_{01},w_{02})$. Now if $w_{02}\neq 0$ then each singular point of $\Spec \Rcal$ satisfies $w_{01} = w_{04} = 0$, and substituting this zeroes to~$M$ we get $\det (6,7,8,10,11,12 | 2,3,5,9,16,18)\in \Mon (w_{02},w_{03})$. Given that $w_{02}\neq 0$ we get also $w_{03} = 0$ at each singular point. Together with $w_{01} = w_{04} = 0$ this implies that singular points satisfying $w_{02}\neq 0$ are not $\chi$-stable.

It remains to consider singular points satisfying $w_{02} = 0$. Substituting this to~$M$ we get $\det(1,6,7,8,9,11 | 4,5,7,8,18,20)\in \Mon(w_{01})$, hence $w_{01} = 0$. Substituting to~$M$ again, we have $\det (4,5,7,9,10,11 |4,6,8,9,18,19)\in \Mon(w_{04})$ and $\det(3,5,7,9,10,11| 2,3,5,7,17,18)\in \Mon(w_{03})$. Hence $w_{03} = w_{04} = 0$. Together with $w_{01}=0$ this, as before, implies that singular points satisfying $w_{02} = 0$ are not $\chi$-stable.
\end{proof}

Following~\cite[Prop.~3.16]{cox_resolutions}, we relate GIT quotients of $\Spec \cR$ to the singular space~$V/G$. 

\begin{lemma}\label{lemma:invariants-of-torus-D8}
The homomorphism $\Rcal\to \CC[V]^{[G,G]}$ induces an isomorphism $\Rcal^{\TT} \cong \CC[V]^{G}$.
\end{lemma}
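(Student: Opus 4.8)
The plan is to exploit the explicit description of the $\TT$-action on the variables $w_{ij},u_0,u_2$ given just before the lemma. The ring $\cR$ sits inside $\Pcal[t_0^{\pm1},t_2^{\pm1}]$ and $\cR^{\TT}$ is by definition the degree-$(0,0)$ part with respect to the $\ZZ^2$-grading by the Picard torus. Since $\Pcal = \CC[V]^{[G,G]}$ carries only the residual $\Ab(G)$-grading (which is the $\Cl(Y)$-grading, not the $\Cl(X)$-grading), an element of $\Pcal[t_0^{\pm1},t_2^{\pm1}]$ is $\TT$-invariant precisely when it lies in $\Pcal[t_0^{\pm2},t_2^{\pm2}]\cdot\{\text{monomials in }t_0,t_2\text{ of weight }0\}$; more to the point, the weight-zero part of $\Pcal[t_0^{\pm1},t_2^{\pm1}]$ is exactly $\Pcal$ itself, embedded via the identity. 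So abstractly $\cR^{\TT} = \cR\cap\Pcal$, and the content of the lemma is that this intersection equals $\CC[V]^G$ inside $\Pcal=\CC[V]^{[G,G]}$.

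First I would check the inclusion $\cR^{\TT}\subseteq\CC[V]^G$. An element of $\cR$ is a polynomial in the generators $t_0^{-2},t_2^{-2},\phi_{02}t_0t_2,\phi_{0i}t_0,\phi_{2j}t_2,\phi_{ij}$. Taking the weight-$(0,0)$ part kills all monomials in these generators except those whose total $t_0$-exponent and $t_2$-exponent both vanish; such a monomial is a product of $\phi_{ij}$'s (weight $0$) with matched pairs such as $(\phi_{0i}t_0)(\phi_{0i'}t_0)(t_0^{-2})$, $(\phi_{2j}t_2)(\phi_{2j'}t_2)(t_2^{-2})$, $(\phi_{02}t_0t_2)(\phi_{0i}t_0)(\dots)$, etc.—in every case a product of an even number of $\phi$'s "with a $0$ or $2$ index" times an explicit $\phi$-polynomial with no such index, all of which lie in $\CC[V]^{[G,G]}$. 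The point is that such products are invariant under $\Ab(G)$: each $\phi_{ij}$ with $0\in\{i,j\}$ changes sign under $T_0$, so a product of an even number of them is $T_0$-invariant, and similarly for $T_2$; hence the whole weight-zero piece is fixed by all of $\Ab(G)=G/[G,G]$ and therefore lies in $(\CC[V]^{[G,G]})^{\Ab(G)}=\CC[V]^G$. (Here I would phrase this via the $\Cl(Y)=\Ab(G)^\vee$-grading on $\Pcal$: the image of $\cR^{\TT}$ lands in the trivial-character graded piece.)

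Conversely, for $\CC[V]^G\subseteq\cR^{\TT}$ I would use that $\CC[V]^G$ is the trivial-isotypic summand of $\Pcal=\CC[V]^{[G,G]}$ under $\Ab(G)$. By Theorem~\ref{theorem:general-theorem} (applied to this $G$, whose hypotheses—$[G,G]$ containing no symplectic reflection, and condition~\eqref{assumption:lifting-valuations}, the latter granted by the companion paper~\cite{valuationslift}, or at least the weaker separated versions in Lemma~\ref{lemma:lifting-valuations-separatedlyZ2wr}—are in force), the $\overline\phi_{ij}$ together with $t_0^{-2},t_2^{-2}$ generate $\Theta(\cR(X))$, and $\Theta$ is an isomorphism onto its image by Proposition~\ref{proposition:cox-rings-embedding}; moreover $\cR(X)^{\TT_{\Cl(X)}}=\Gamma(X,\Oscr_X)=\CC[V/G]=\CC[V]^G$ since $X\to V/G$ is a resolution and $V/G$ is normal and affine. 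Transporting through $\Theta$, this says exactly $\cR^{\TT}\supseteq\Theta(\CC[V]^G)=\CC[V]^G$ (the invariants of $V/G$ have trivial $\Cl(Y)$-degree, so $\Theta$ is the identity on them). Combining the two inclusions gives the isomorphism.

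The main obstacle I anticipate is making the "weight-zero part of $\cR$ lies in $\CC[V]^G$" argument clean without a long monomial bookkeeping: one must argue that \emph{every} weight-$(0,0)$ monomial in the chosen generators is a product of $\phi_{ij}$'s in which, for each of the indices $0$ and $2$, the number of factors $\phi_{ij}$ with that index is even (the $u_0=t_0^{-2}$ and $u_2=t_2^{-2}$ generators contribute no $\phi$'s and carry even $t$-weight). This is a parity statement following directly from weight balancing—$\phi_{02}t_0t_2$ contributes $+1$ to each of the $t_0$- and $t_2$-exponents and one factor with both indices; $\phi_{0i}t_0$ contributes $+1$ to the $t_0$-exponent and one factor with the index $0$; $u_0$ contributes $-2$ to the $t_0$-exponent and no $\phi$; so in a weight-zero monomial the $t_0$-exponent count forces the number of $\phi$-factors carrying index $0$ to be even, and symmetrically for $2$—hence the monomial is $\Ab(G)$-invariant. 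Alternatively, and perhaps more slickly, I would bypass the monomial argument entirely by invoking that $\cR^{\TT} = \big(\Theta(\cR(X))\big)^{\TT} \subseteq \big(\Rcal(Y)\otimes\CC[\Cl(X)]\big)^{\TT} = \Rcal(Y)_0 = \CC[V]^G$, where the last equality is that the $\Cl(Y)$-trivial graded piece of $\Rcal(Y)=\CC[V]^{[G,G]}$ is $\CC[V]^G$, and the identification of $\TT$-weights across $\overline\Theta$ is exactly Proposition~\ref{proposition:image-via-theta}; this reduces the whole lemma to bookkeeping already done in Section~\ref{subsection:image-of-theta}.
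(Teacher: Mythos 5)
Your first inclusion, $\Rcal^{\TT}\subseteq\CC[V]^{G}$, is correct and is essentially the paper's own argument: the paper compresses your parity bookkeeping into the observation that $\Ab(G)$ embeds into $\TT$ via $T_0\mapsto(-1,1)$, $T_2\mapsto(1,-1)$, making the evaluation $\Rcal\to\CC[V]^{[G,G]}$ equivariant, so $\TT$-invariants land in $\Ab(G)$-invariants; your explicit weight count on the generators is the same computation, and injectivity is likewise handled by noting that the $\TT$-weight-zero part of $\Pcal[t_0^{\pm1},t_2^{\pm1}]$ is $\Pcal$.

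The gap is in the reverse inclusion $\CC[V]^G\subseteq\Rcal^{\TT}$. You deduce it from $\Rcal=\Theta(\Rcal(X))$ together with $\Rcal(X)^{\TT}=\Gamma(X,\Oscr_X)=\CC[V]^G$. But $\Rcal=\Theta(\Rcal(X))$ is precisely Proposition~\ref{proposition:CoxRingZ2wr}, which is \emph{not} proved in this paper: only the per-valuation Lemma~\ref{lemma:lifting-valuations-separatedlyZ2wr} is established here, and that weaker statement does not suffice for Theorem~\ref{theorem:general-theorem}, whose proof requires a single lift $\wt{f}$ with $\nu_i(f)\le\wt{\nu}_i(\wt{f})$ for both $i=0,2$ \emph{simultaneously}; the full condition~\eqref{assumption:lifting-valuations} is deferred to~\cite{valuationslift}. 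Note also that the containment you need is the hard one, $\Theta(\Rcal(X))\subseteq\Rcal$ (every $G$-invariant must be expressible in the listed generators), not the easy one, so there is no way to get by with only the trivially available half of Proposition~\ref{proposition:CoxRingZ2wr}; and the architecture of Sections 5.3--5.6 deliberately treats $\Rcal$ as the explicitly presented ring, independent of that proposition. The fix is elementary and is what the paper does: $\CC[V]^G$ is spanned by the $\Ab(G)$-invariant monomials $\prod\phi_{ij}^{a_{ij}}$ (the $\phi_{ij}$ generate $\Pcal$ and are eigenvectors), invariance under $T_0$ and $T_2$ forces $a_{01}+a_{02}+a_{03}+a_{04}=2s_0$ and $a_{02}+a_{12}+a_{23}+a_{24}=2s_2$, and then
\begin{equation*}
\prod\phi_{ij}^{a_{ij}}\;=\;(t_0^{-2})^{s_0}(t_2^{-2})^{s_2}\prod\overline{\phi}_{ij}^{\,a_{ij}}\Big|_{t_0=t_2=1},
\end{equation*}
where the right-hand side before evaluation is a $\TT$-invariant word in the generators of $\Rcal$. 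With that substitution your proof closes.
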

\begin{proof}
Note that the group $\Ab(G)$ can be presented as a subgroup of $\TT$ by sending the class of $T_{0}$ to $(-1,1)$ and $T_{2}$ to $(1,-1)$. This makes the homomorphism $\Rcal\to \CC[V]^{[G,G]}$ an $\Ab(G)$-equivariant homomorphism, hence the image of $\Rcal^{\TT}$ is contained in $\CC[V]^{G}$. It is injective on $\Rcal^{\TT}$, since $\Rcal^{\TT}$ has trivial intersection with its kernel $(t_{0}-1,t_{2}-1)\cap \cR$. To prove it is onto, note that a $[G,G]$-invariant element  $\prod \phi_{ij}^{a_{ij}}$ is in $\CC[V]^{G}$ if and only if $a_{01}+a_{02}+a_{03} + a_{04} = 2s_{0}, \ a_{02} + a_{12} + a_{23} + a_{24} = 2s_{2}$ for some nonnegative integers $s_{0},s_{2}$. Then $\prod \phi_{ij}^{a_{ij}} = (t_{0}^{-2})^{s_{0}}(t_{2}^{-2})^{s_{2}}\prod \overline{\phi}_{ij}^{a_{ij}}$.
\end{proof}

\begin{theorem}\label{theorem:smoothness-D8}
The GIT quotient of $\Spec \Rcal$ associated with the linearization $\chi$ is a crepant resoloution of singularities of $V/G$. There are exactly two such resolutions, the other one is the GIT quotient of $\Spec \Rcal$ for the linearization $\chi^{(1,2)}$.

\end{theorem}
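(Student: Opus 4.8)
The plan is to produce the resolution as a GIT quotient and check three properties (smoothness, birationality and crepancy over $V/G$), then count how many such quotients occur. Write $X_{\chi}$ for $\Proj\bigoplus_{k\ge 0}\Rcal_{k\chi}$, the GIT quotient of $\Spec\Rcal$ by $\TT$ linearized by $\chi=\chi^{(2,1)}$. First I would record that $X_{\chi}$ is smooth: by Proposition~\ref{proposition:stability-for-D8} every $\chi$-semistable point of $\Spec\Rcal$ is $\chi$-stable with trivial isotropy, so $X_{\chi}$ is a geometric quotient and the quotient map from the stable locus is a $\TT$-torsor; since that stable locus is smooth by Proposition~\ref{proposition:smoothness-of-stable-pts-D8}, so is $X_{\chi}$. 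Next, $\Spec\Rcal$ is irreducible ($\Rcal$ is a domain) of dimension $6$ — it dominates the $4$-dimensional $V/G$ with $2$-dimensional generic fibres, as the $\TT$-action has finite generic stabiliser — so $X_{\chi}$ is irreducible of dimension $4$. By Lemma~\ref{lemma:invariants-of-torus-D8}, $\Rcal^{\TT}\cong\CC[V]^{G}$, and the inclusion $\Rcal^{\TT}\hookrightarrow\Rcal$ induces a projective morphism $\pi\colon X_{\chi}\to\Spec\CC[V]^{G}=V/G$. As the $\TT$-action has trivial generic stabiliser, $\CC(X_{\chi})=(\operatorname{Frac}\Rcal)^{\TT}=\CC(V)^{G}=\CC(V/G)$, so $\pi$ is birational; being projective with smooth source, $\pi$ is a resolution of singularities of $V/G$.

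For crepancy I would invoke Proposition~\ref{proposition:CoxRingZ2wr}, which identifies $\Rcal$ with $\overline{\Theta}(\Rcal(X))$, i.e. $\Spec\Rcal\cong\Spec\Rcal(X)$ for a symplectic resolution $X\to V/G$ (the image is the same for every such $X$, since crepant resolutions of $V/G$ are mutually small $\QQ$-factorial modifications and hence share their Cox ring). The spectrum of the Cox ring of a smooth Mori dream space with free class group is normal and factorial, and — as the class group of the spectrum is trivial — its canonical sheaf is invertible and corresponds, as a $\TT$-equivariant sheaf, to the character attached to $[K_{X}]\in\Cl(X)$; since $X$ is crepant, $K_{X}=\varphi^{*}K_{V/G}=0$, so this character is trivial. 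Hence every GIT quotient of $\Spec\Rcal$ by $\TT$, in particular $X_{\chi}$, has trivial canonical sheaf, and since $K_{V/G}=0$ this gives $K_{X_{\chi}}=\pi^{*}K_{V/G}$, i.e. $\pi$ is crepant. An alternative, not relying on Proposition~\ref{proposition:CoxRingZ2wr}, is to use that $V/G$ has symplectic singularities, so the holomorphic symplectic form on $(V/G)_{\mathrm{reg}}$ extends to a regular $2$-form on $X_{\chi}$, and then to check it remains non-degenerate by comparison with the known crepant resolution of case~(2) in Section~\ref{section_what_is_known}; this again forces $K_{X_{\chi}}=0$.

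Finally, for ``exactly two'' I would analyse the GIT chamber decomposition of the character space $\widehat{\TT}\otimes\RR\cong\RR^{2}$. By Proposition~\ref{proposition:CoxRingZ2wr} together with the general theory of Cox rings, every symplectic resolution of $V/G$ (and every small $\QQ$-factorial modification of it over $V/G$) is a GIT quotient $\Spec\Rcal\,/\!\!/\,\TT$ for some character $\eta$, and is smooth precisely when $\eta$ lies in the interior of a maximal ``resolution'' chamber. Using the list of $\TT$-weights from Section~\ref{section_D8_stability} and the semistability description of Lemma~\ref{lemma:semistability-for-D8}, one computes that the relevant cone of characters is cut by a single wall into exactly two maximal chambers, both producing smooth quotients, the wall itself giving only a small partial resolution; and $\chi^{(2,1)}$ and $\chi^{(1,2)}$ lie one in each chamber, interchanged by the symmetry $t_{0}\leftrightarrow t_{2}$, so they realise the two crepant resolutions, related across the wall by the flop mentioned in the introduction. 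Since $V/G$ is known to admit a symplectic resolution (case~(2) in Section~\ref{section_what_is_known}), these account for all of them. I expect the chamber bookkeeping of this last step — ruling out any further smooth crepant model and confirming that the two quotients are genuinely distinct as resolutions — to be the main point requiring care; the smoothness and the resolution property of $X_{\chi}$ itself are immediate from Propositions~\ref{proposition:stability-for-D8}, \ref{proposition:smoothness-of-stable-pts-D8} and Lemma~\ref{lemma:invariants-of-torus-D8}.
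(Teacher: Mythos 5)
Your proposal is correct in outline and agrees with the paper on the first half: smoothness of the quotient is obtained exactly as in the paper, from Proposition~\ref{proposition:stability-for-D8} (free action on the stable locus, semistable $=$ stable) plus Proposition~\ref{proposition:smoothness-of-stable-pts-D8}, and the projective birational morphism to $V/G$ comes from Lemma~\ref{lemma:invariants-of-torus-D8}. Where you diverge is in the crepancy and the count, and the difference is worth noting. For crepancy the paper gives a low-tech local argument: the generic fibre of the resolution over each codimension-two $A_1$ stratum is irreducible, so in codimension two one gets the minimal resolution of an $A_1$ surface singularity, hence every exceptional divisor has discrepancy zero; this uses nothing beyond what is already proved about $\Spec\Rcal$. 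Your argument instead routes through Proposition~\ref{proposition:CoxRingZ2wr} (identifying $\Rcal$ with $\Theta(\Rcal(X))$ for a symplectic resolution $X$) and the triviality of the canonical character on the Cox ring spectrum; this is valid, but it makes the theorem depend on a proposition whose complete proof the paper defers to~\cite{valuationslift}, whereas the paper's proof of Theorem~\ref{theorem:smoothness-D8} is deliberately independent of that identification. For ``exactly two'' the paper does not run a GIT chamber analysis: it invokes Wierzba--Wi\'sniewski (\cite[Thm.~1.2]{WierzbaWisniewski}), by which any two symplectic resolutions differ by Mukai flops in $\PP^2$'s contained in fibres, and then uses the explicit central-fibre computation of section~\ref{section_D8_central_fiber} (Proposition~\ref{proposition:central-fiber-D8}: one $\PP^2$ and one $\FF_4$) to conclude there is a unique plane to flop, hence exactly two resolutions, interchanged by passing from $\chi^{(2,1)}$ to $\chi^{(1,2)}$. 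Your variation-of-GIT route can be made to work, but the two claims you flag as ``bookkeeping'' --- that the movable cone has exactly two chambers and that every symplectic resolution arises as a quotient of $\Spec\Rcal$ --- are precisely the points that again require Proposition~\ref{proposition:CoxRingZ2wr} and an unperformed wall computation; the paper's flop-counting argument buys both at the price of the toric central-fibre analysis it carries out anyway.
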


\begin{proof}
Lemma~\ref{lemma:invariants-of-torus-D8} gives a birational morphism $X\to V/G$. This morphism is projective by~\cite[Prop.~14.1.12]{CLS} applied to $\Spec \Rcal$ and its ambient affine space. So it suffices to prove the smoothness of the GIT quotient. By Lemma~\ref{proposition:stability-for-D8}, the action of~$\TT$ on the set of $\chi$-stable points of $\Spec \Rcal$ is free. By Corollary~\ref{proposition:smoothness-of-stable-pts-D8} we know that the set of $\chi$-stable points of $\Spec \Rcal$ is nonsingular. Now we may deduce nonsingularity of the quotient since the geometric quotient of a nonsingular variety by a free torus action is nonsingular.

To show that this resolution is crepant it suffices to note that it gives crepant, i.e. minimal, resolution of $A_{1}$-singularities in codimension~2. This is true since generic fibers of the resolution restricted to exceptional divisor are irreducible.

To prove the second part we note that by~\cite[Thm.~1.2]{WierzbaWisniewski} any two such resolutions differ by finite sequence of Mukai flops with respect to projective planes contained in the central fiber. Since we prove in section~\ref{section_D8_central_fiber} that there is only one such plane, there are exactly two symplectic resolutions. It remains to observe that changing linearization from $\chi^{(2,1)}$ to $\chi^{(1,2)}$ yields a Mukai flop of GIT quotients.
\end{proof}

\subsection{The central fiber}
\label{section_D8_central_fiber}

Let $\varphi\colon X\to V/G$ be one of the resolutions obtained as (nonsingular) GIT quotients of $\Spec \cR$ by Theorem~\ref{theorem:smoothness-D8}. We proceed to describe the central fiber $S = \varphi^{-1}([0])$, i.e. the fiber over the image of $0$ via the quotient map $V\to V/G$. It is the unique $2$-dimensional fiber of $\varphi$. By the McKay correspondence $S$ has two components. Indeed, in~$G\cong D_{8}$ we have two more conjugacy classes apart from two formed by symplectic reflections (corresponding to the components of exceptional divisor), and the class of the identity (corresponding to the whole variety $X$).  

\begin{proposition}\label{proposition:central-fiber-D8}
The central fiber of~$\varphi$ decomposes as $S = F\cup P$ where~$F$ is isomorphic to the Hirzebruch surface~$\FF_{4}$ and $P\cong \PP^{2}$. The intersection $F\cap P$ is a smooth quadric on~$P$. 
\end{proposition}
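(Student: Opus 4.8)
The plan is to exploit the explicit GIT description established in the previous sections: the resolution $X$ is the geometric quotient of the $\chi$-stable locus of $\Spec\cR\subset\CC^{12}$, and the central fiber $S=\varphi^{-1}([0])$ is the quotient of the closed subscheme where all $\phi_{ij}$ (equivalently all $w_{ij}$) vanish. So first I would substitute $w_{ij}=0$ for all $i,j$ into the generators of $\ker\Psi$ from Proposition~\ref{proposition:ideal-of-SpecR}. Almost every generator becomes trivially zero, and the surviving equations will only involve the coordinates $w_{ij}$ linearly together with $u_0,u_2$; restricting further to the stable locus one is left describing $\{w_{ij}=0\}$ inside $\Spec\cR$ as a subvariety on which $\TT$ still acts, then taking the GIT quotient. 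The upshot should be that $S$ is covered by the $\chi$-semistable charts from Lemma~\ref{lemma:semistability-for-D8}, and in each chart one trivializes the torus action to read off affine pieces of $F$ and $P$.

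Next I would identify the two components predicted by the McKay correspondence. The decomposition $\TT\cong\{(t_0,t_2)\}$ and the weights listed in section~\ref{section_D8_stability} suggest splitting according to which of $w_{02}$ (weight $t_0t_2$) or the $w_{0i}w_{2j}$ products ($i,j\in\{1,3,4\}$) provide the nonvanishing semistable coordinates. On the locus $w_{ij}=0$ for all $ij$ this forces $u_0,u_2$ and the remaining coordinates to satisfy the residual relations, and I expect one component $P$ to arise as the quotient chart where, say, $u_0$ or a ratio built from the $t_0$-weight coordinates is free — giving a $\PP^2$ — and the other component $F$ to appear as a $\PP^1$-bundle over $\PP^1$ with the self-intersection forcing $F\cong\FF_4$. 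Concretely, to pin down that $F\cong\FF_4$ I would compute the normal bundle / the transition functions of the ruling directly from the torus weights, or alternatively use that $F$ must be a minimal resolution datum compatible with the $A_1$ Cartan matrix and that $X$ is obtained from $\Hilb^2(S_{\text{du Val}})$ for $\CC^2/\ZZ_2$ — the Hirzebruch number $4$ then matches the known geometry of $\Hilb^2$ of the $A_1$-resolution (cf. the analysis in~\cite{cox_resolutions}).

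Then I would compute the intersection $F\cap P$. In the GIT picture this is the locus where both sets of semistable conditions degenerate simultaneously, i.e. the common boundary of the two families of charts; on $P\cong\PP^2$ it is cut out by the residual quadratic relations among the $w^2_{0i}$-type generators (the last block of equations in Proposition~\ref{proposition:ideal-of-SpecR}, which after setting the other $w_{ij}=0$ become homogeneous quadrics in the coordinates of $P$). Showing that this quadric is smooth is then a direct rank computation on a single $3\times3$ symmetric matrix. The main obstacle, I expect, is bookkeeping: correctly carrying out the substitution $w_{ij}=0$ across all twenty-odd generators, keeping track of which coordinates remain as homogeneous coordinates on each quotient chart after dividing by the $\TT$-action, and matching the resulting affine/projective pieces into a global description of $F$ and $P$. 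The identification $F\cong\FF_4$ specifically — as opposed to some other $\FF_e$ — is the subtle point and is where I would lean on the known structure of $\Hilb^2$ of the $A_1$ du Val resolution rather than on a bare coordinate computation.
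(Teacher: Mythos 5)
Your starting point is wrong, and it sinks the rest of the argument. The central fiber is \emph{not} the quotient of the locus $\{w_{ij}=0 \text{ for all } i<j\}$. The fiber over $[0]$ is the vanishing locus $W$ of the ideal generated by the $\TT$-invariants $\cR^{\TT}\cong\CC[V]^{G}$, and these invariants are elements such as $w_{13},w_{14},w_{34}$ but also $w_{01}^{2}u_{0}$, $w_{01}w_{03}u_{0}$, $w_{02}^{2}u_{0}u_{2}$, etc. (since, e.g., $\phi_{01}^{2}=\overline{\phi}_{01}^{2}\,t_{0}^{-2}$). Their vanishing does not force the $w_{ij}$ to vanish: it can be achieved by $u_{0}=u_{2}=0$ instead. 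Consequently $W$ has, besides the $2$-dimensional linear piece $\{w_{ij}=0\}$ (which is entirely \emph{unstable} by the semistability criterion of Lemma~\ref{lemma:semistability-for-D8}, hence contributes nothing to the quotient), three $4$-dimensional components on which $u_{0}=u_{2}=0$ but various $w_{ij}$ are nonzero. The paper finds these by a primary decomposition of the ideal $(\cR^{\TT}_{+})\cR$; two of them, $W_{2}$ and $W_{02}$, carry the stable points and yield $P\cong\PP^{2}$ and $F\cong\FF_{4}$. If you substitute $w_{ij}=0$ everywhere as you propose, you land entirely in the unstable locus and the quotient you compute is empty.

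The second gap is the identification $F\cong\FF_{4}$, which you yourself flag as the subtle point but then outsource to ``the known structure of $\Hilb^{2}$ of the $A_{1}$ resolution.'' That is not an argument available here: the whole point of the construction is to recover the resolution from $\Spec\cR$ without assuming the $\Hilb^{n}(S)$ model, and in any case you would still have to match the GIT quotient of the relevant component with that model. The paper instead observes that the component $W_{02}$ becomes, after an explicit linear change of coordinates, an affine \emph{toric} variety, computes its cone, removes the faces corresponding to the unstable orbit closures, and projects along the $\TT$-action to obtain a complete fan with rays $(1,1),(1,0),(-1,3),(-1,0)$ --- the fan of $\FF_{4}$. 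Your description of the quadric $F\cap P$ (a rank computation on a $3\times 3$ symmetric form in the homogeneous coordinates of $P$) is essentially right in spirit, but it only makes sense once the correct components $W_{2}$ and $W_{02}$ have been identified.
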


We will verify this statement directly by describing the preimage of~$[0]$ in $\Spec \Rcal$ and then investigating its quotient by~$\bT$. Let~$W$ be the preimage of $[0]$ under the map $\Spec \Rcal \to V/G$ induced by the isomorphism $\CC[V]^{G} \cong \Rcal^{\TT} \subseteq \cR$. If $X\to V/G$ is one of the resolutions constructed in the previous section, we have the commutative diagram:
\begin{equation*}
\xymatrix@C-2pc@R-0.5pc{
W\ar[dd]\ar@{-->}[rrrd] & \subset & \Spec \Rcal\ar[dd]\ar@{-->}[rrrd] & & & \\
& & & S\ar[llld]& \subset & X \ar[llld]^{\varphi}\\
[0] & \in & V/G & &
}
\end{equation*}

\begin{lemma}\label{lemma:components-over-central-fiber}
$W \subseteq \Spec \cR \subseteq \bC^{12}$ decomposes into irreducible varieties as $W= W_u\cup W_{02}\cup W_{0}\cup W_{2}$, where $\dim W_u = 2$ and $dim W_{02} = \dim W_{0} = \dim W_{2} = 4$. More precisely:
\begin{itemize}
\item $W_u$ is a two-dimensional affine subspace of $\CC^{12}$ given by $\{w_{ij} = 0 \colon 0\leq i < j \leq 4\}$.
\item $W_{02}$ is a four-dimensional subvariety of $\CC^{12}$ with ideal generated by
$$
\begin{array}{ll}
w_{01}w_{23} - w_{03}w_{12} & w_{01}^{2} + w_{03}^{2} + w_{04}^{2}\\
w_{03}w_{24} - w_{04}w_{23} & w_{12}^{2} + w_{23}^{2} + w_{24}^{2}\\
w_{04}w_{12} - w_{01}w_{24} & w_{01}w_{12} + w_{03}w_{23} + w_{04}w_{24}\\
w_{13},\ w_{14},\ w_{34},\ u_{0},\ u_{2}. &
\end{array}
$$
\item $W_{0}$ is a four-dimensional subvariety of $\CC^{12}$ with ideal generated by
\begin{equation*}
w_{01}, \ w_{03}, \ w_{04},\ w_{13},\ w_{14},\ w_{34},\ u_{2},\ w_{02}^{2}u_{0} + w_{12}^{2} + w_{23}^{2} + w_{24}^{2}.
\end{equation*}
\item $W_{2}$ is a four-dimensional subvariety of $\CC^{12}$ with ideal generated by
\begin{equation*}
w_{12}, \ w_{23}, \ w_{24},\ w_{13},\ w_{14},\ w_{34},\ u_{0},\ w_{02}^{2}u_{2} + w_{01}^{2} + w_{03}^{2} + w_{04}^{2}.
\end{equation*}
\end{itemize}
\end{lemma}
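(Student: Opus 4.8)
The plan is to pin down $W$ set-theoretically inside $\Spec\cR\subseteq\bC^{12}$ directly from the equations of Proposition~\ref{proposition:ideal-of-SpecR}, and then recognise its irreducible components one by one. First I would identify the ideal that cuts out $W$. By Lemma~\ref{lemma:invariants-of-torus-D8} the map $\Spec\cR\to V/G$ is induced by $\bC[V]^{G}\cong\cR^{\TT}\subseteq\cR$, so $W$ is the zero set in $\Spec\cR$ of the maximal ideal $\mfrak_{0}\subseteq\bC[V]^{G}$ at the origin. Now $\phi_{13},\phi_{14},\phi_{34}$ together with the products $\phi_{0i}\phi_{0j}$, $\phi_{2i}\phi_{2j}$ for $i,j\in\{1,3,4\}$ and $\phi_{02}^{2}$ are $G$-invariant, and their common zero set in $V$ is the origin (they force all ten $\phi_{ij}$ to vanish, hence $v=0$), so they generate $\mfrak_{0}$ up to radical. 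Using $\overline{\phi}_{0i}=\phi_{0i}t_{0}$, $\overline{\phi}_{2j}=\phi_{2j}t_{2}$, $\overline{\phi}_{02}=\phi_{02}t_{0}t_{2}$ and $u_{k}=t_{k}^{-2}$, these pull back under $\Psi$ to the monomials $w_{13},w_{14},w_{34}$, $\ w_{0i}w_{0j}u_{0}$, $\ w_{2i}w_{2j}u_{2}$, $\ w_{02}^{2}u_{0}u_{2}$ (with $w_{21}:=w_{12}$). Hence, set-theoretically, $W$ is the subset of $\Spec\cR$ cut out by $w_{13}=w_{14}=w_{34}=0$, $\ w_{0i}w_{0j}u_{0}=0$, $\ w_{2i}w_{2j}u_{2}=0$, $\ w_{02}^{2}u_{0}u_{2}=0$.

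Next I would run a case analysis according to the vanishing of $u_{0}$ and $u_{2}$. If $u_{0}\neq 0$ and $u_{2}\neq 0$, the monomial equations force all $w_{ij}=0$; since every generator in Proposition~\ref{proposition:ideal-of-SpecR} is divisible by some $w_{ij}$, the whole plane $\{w_{ij}=0\}$ lies in $\Spec\cR$, giving $W_{u}$. If $u_{0}\neq 0$ and $u_{2}=0$, then $w_{01}=w_{03}=w_{04}=0$, and substituting $w_{01}=w_{03}=w_{04}=w_{13}=w_{14}=w_{34}=u_{2}=0$ into the generators of $\ker\Psi$ kills all of them except $w_{02}^{2}u_{0}+w_{12}^{2}+w_{23}^{2}+w_{24}^{2}$, which is exactly the residual equation of $W_{0}$; symmetrically $u_{0}=0$, $u_{2}\neq 0$ yields $W_{2}$. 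Finally, if $u_{0}=u_{2}=0$, substituting $w_{13}=w_{14}=w_{34}=u_{0}=u_{2}=0$ leaves precisely the six polynomials in the stated ideal of $W_{02}$. For the reverse inclusions one reads off each defining ideal that $w_{13}=w_{14}=w_{34}=0$ there and that one of the factors of each of $w_{0i}w_{0j}u_{0}$, $w_{2i}w_{2j}u_{2}$, $w_{02}^{2}u_{0}u_{2}$ vanishes, so $W_{u},W_{0},W_{2},W_{02}\subseteq W$. This bookkeeping — tracking which of the $21$ generators of $\ker\Psi$ survives each specialisation and matching the residue against the stated ideals — is the step I expect to be the main obstacle: it is entirely mechanical but error-prone, and I would cross-check it with a Macaulay2 computation of the minimal primes of $\Psi^{-1}(\mfrak_{0}\cR)+\ker\Psi$.

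It then remains to see that each piece is irreducible of the stated dimension. $W_{u}\cong\AA^{2}$ is immediate. In the coordinates left free, $W_{0}$ and $W_{2}$ are hypersurfaces in $\AA^{5}$ with equations $w_{02}^{2}u_{0}+w_{12}^{2}+w_{23}^{2}+w_{24}^{2}$ and $w_{02}^{2}u_{2}+w_{01}^{2}+w_{03}^{2}+w_{04}^{2}$; each is irreducible, being primitive of degree $1$ in $u_{0}$ (resp.\ $u_{2}$), so $W_{0},W_{2}$ are irreducible fourfolds. For $W_{02}$ the variable $w_{02}$ occurs in no equation, so $W_{02}=\AA^{1}\times Z$ where $Z\subseteq\AA^{6}$ is defined by the three $2\times 2$ minors of the matrix with rows $(w_{01},w_{03},w_{04})$ and $(w_{12},w_{23},w_{24})$ together with $w_{01}^{2}+w_{03}^{2}+w_{04}^{2}$, $w_{12}^{2}+w_{23}^{2}+w_{24}^{2}$, $w_{01}w_{12}+w_{03}w_{23}+w_{04}w_{24}$. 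Parametrising the rank $\le 1$ locus by $((\lambda,\mu),(p,q,r))\mapsto(\lambda p,\lambda q,\lambda r,\mu p,\mu q,\mu r)$ turns the last three equations into $\lambda^{2}(p^{2}+q^{2}+r^{2})$, $\mu^{2}(p^{2}+q^{2}+r^{2})$, $\lambda\mu(p^{2}+q^{2}+r^{2})$, so $Z$ is the image of $\AA^{2}\times\{p^{2}+q^{2}+r^{2}=0\}$; this is irreducible of dimension $3$ (the quadric cone is an irreducible surface and the generic fibre of the parametrisation is one-dimensional), hence $W_{02}$ is an irreducible fourfold. Since the four components have pairwise distinct defining conditions none is contained in another, so $W=W_{u}\cup W_{02}\cup W_{0}\cup W_{2}$ is the decomposition into irreducible components.
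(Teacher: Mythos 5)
Your argument is correct, and it is a genuinely different route from the paper's: the paper's proof of this lemma is a one-line appeal to a Macaulay2 primary decomposition of the ideal $\mfrak_{0}\cR$ (the image in $\cR$ of the maximal ideal of $[0]\in V/G$), whereas you derive the decomposition by hand. Your three key moves — exhibiting explicit $G$-invariants ($\phi_{13},\phi_{14},\phi_{34}$, the products $\phi_{0i}\phi_{0j}u_{0}$-style monomials, and $\phi_{02}^{2}u_{0}u_{2}$) that cut out $[0]$ up to radical, stratifying by the vanishing of $u_{0}$ and $u_{2}$, and then tracking which of the $21$ generators of $\ker\Psi$ survive each specialisation — all check out (I verified the substitutions; only generator $w_{02}^{2}u_{0}+w_{12}^{2}+w_{23}^{2}+w_{24}^{2}$ survives on the $W_{0}$ stratum, and exactly the six displayed quadrics survive on $u_{0}=u_{2}=0$). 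The irreducibility arguments are also fine: Gauss's lemma for the hypersurfaces $W_{0},W_{2}$, and the parametrisation of $W_{02}$ as $\AA^{1}\times Z$ with $Z$ the image of $\AA^{2}\times\{p^{2}+q^{2}+r^{2}=0\}$ under the rank-one parametrisation. What your approach buys is transparency — it explains why there are exactly these four components — at the cost of length; what the paper's buys is that the primary decomposition certifies the displayed generating sets as the actual prime ideals. That last point is the one small residue in your write-up: for $W_{u},W_{0},W_{2}$ your argument already shows the stated ideals are prime, but for $W_{02}$ you only establish that the stated equations cut out an irreducible $4$-fold set-theoretically, not that they generate a radical ideal; that final check (or the cross-check of minimal primes you already propose) is still needed to match the lemma's phrasing ``with ideal generated by.''
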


\begin{proof}
The irreducible components of $W$ are computed in Macaulay2,~\cite{M2}, by finding the primary decomposition of the image in $\Rcal$ of the ideal of point $[0]\in V/G$. This is the ideal of $\Rcal$ generated by invariants of the $\TT$-action on~$\Rcal$.
\end{proof}

In what follows we take the linearization of $\TT$ corresponding to $\chi = \chi^{(2,1)}$. Results and proofs in the case of $\chi^{(1,2)}$ are essentially the same. We start from describing the set of $\chi$-stable points of~$W$.

\begin{lemma}\label{lemma:stability-over-central-fiber}
The set of $\chi$-semistable points on each component of $W$ coincides with the set of $\chi$-stable points on this component and is equal to its intersection with $(\CC^{12})^{ss}$. In particular $W^{ss} = W^{s} = W_{2}^{s} \cup W_{02}^{s}$.
\end{lemma}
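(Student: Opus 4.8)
The plan is to verify the statement componentwise, using the explicit equations of $W_u$, $W_{02}$, $W_0$, $W_2$ from Lemma~\ref{lemma:components-over-central-fiber} together with the description of $(\CC^{12})^{ss}_\chi$ from Lemma~\ref{lemma:semistability-for-D8}. The semistable locus is $\bigcup_{i=1,3,4}\{w_{02}w_{0i}\neq 0\}\cup\bigcup_{i,j=1,3,4}\{w_{0i}w_{2j}\neq 0\}$; in every summand at least one of $w_{13},w_{14},w_{34}$ is required to be nonzero. But on all four components of $W$ the coordinates $w_{13},w_{14},w_{34}$ vanish identically (they appear among the generators of each ideal). Hence $W\cap(\CC^{12})^{ss}_\chi=\emptyset$, which at first glance looks like it kills everything --- so I need to be careful: the point is that stability is tested on $\Spec\cR$, and the relevant set is $W^{ss}$ computed \emph{inside} $\Spec\cR$, where the $\chi$-(semi)stability of a point depends only on its $\TT$-orbit closure in $\Spec\cR$, not on the ambient $\CC^{12}$. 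Actually, re-examining: the correct reading is that Lemma~\ref{lemma:semistability-for-D8} computes semistability in $\CC^{12}$ with the given $\TT$-linearization, and intersecting with $\Spec\cR$ gives $(\Spec\cR)^{ss}_\chi$. So I should instead recompute the stability criterion directly on each $W_\ast$ by the numerical (Hilbert--Mumford) criterion, using the weights of the $\TT$-action listed in section~\ref{section_D8_stability}.

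First I would recall the weights: $\TT=\{(t_0,t_2)\}$ acts with weight $(0,0)$ on $w_{13},w_{14},w_{34}$, weight $(1,0)$ on $w_{01},w_{03},w_{04}$, weight $(0,1)$ on $w_{12},w_{23},w_{24}$, weight $(1,1)$ on $w_{02}$, weight $(-2,0)$ on $u_0$, weight $(-2,0)$... wait, $(0,-2)$ on $u_2$; and we linearize by $\chi=\chi^{(2,1)}$. A point $p$ is $\chi$-semistable iff for every one-parameter subgroup $\lambda$ with $\lim_{s\to 0}\lambda(s)\cdot p$ existing in $\Spec\cR$ we have $\langle\chi,\lambda\rangle\ge 0$, and $\chi$-stable iff strict inequality holds for all nontrivial such $\lambda$. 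Then I would go component by component. On $W_u$ (all $w_{ij}=0$, and $u_0,u_2$ free): every point has in its orbit closure a point with a one-parameter subgroup destabilizing it --- concretely, since only the $u_k$ coordinates can be nonzero and they have weights $(-2,0),(0,-2)$ which pair negatively with suitable $\lambda$ while $\langle\chi,\cdot\rangle$ can be made negative, $W_u$ contains no semistable point; so $W_u^{ss}=\emptyset$. On $W_0$: here $w_{01}=w_{03}=w_{04}=0$, $u_2=0$, and the surviving nonzero coordinates are among $w_{02},w_{12},w_{23},w_{24},u_0$ with weights $(1,1),(0,1),(0,1),(0,1),(-2,0)$; I would check that for $\lambda=(1,0)$ (i.e. $t_0=s$, $t_2=1$) the limit as $s\to 0$ exists on $W_0$ (the only coordinate with negative first weight, $u_0$, has weight $-2$, so $u_0\to\infty$ unless $u_0=0$; but on $W_0$ the defining equation $w_{02}^2u_0+w_{12}^2+w_{23}^2+w_{24}^2=0$ forces... hmm), and that $\langle\chi^{(2,1)},(1,0)\rangle=2>0$ while for $\lambda=(-1,0)$ we get $\langle\chi,\lambda\rangle=-2<0$ and the limit exists whenever $u_0=0$ --- which, combined with the equation, forces $w_{12}=w_{23}=w_{24}=0$ hence the point degenerates into $W_u$; the upshot I expect is $W_0^{ss}=\emptyset$. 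On $W_{02}$ and $W_2$, by contrast, the coordinate $w_{02}$ (weight $(1,1)$, pairing $3$ with $\chi$) and the $w_{0i}$/$w_{2j}$ are available, and a point with, say, $w_{02}\neq 0$ and one of $w_{12},w_{23},w_{24}\neq 0$ will satisfy the numerical criterion strictly for every nontrivial $\lambda$; I would match these conditions against Lemma~\ref{lemma:semistability-for-D8} restricted to the component and check $W_{02}^{ss}=W_{02}^s=W_{02}\cap(\CC^{12})^{ss}$, likewise for $W_2$, and that both are nonempty. Finally, equality of the semistable and stable loci on each component follows because by Proposition~\ref{proposition:stability-for-D8} \emph{every} $\chi$-semistable point of $\Spec\cR$ is already $\chi$-stable with trivial isotropy, and $W\subseteq\Spec\cR$; so $W^{ss}=W^s$ is immediate and the componentwise statement drops out.

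Assembling: $W^{ss}=W^s=(W_u^s\cup W_{02}^s\cup W_0^s\cup W_2^s)=W_{02}^s\cup W_2^s$ since $W_u^s=W_0^s=\emptyset$, and each nonempty piece equals its intersection with $(\CC^{12})^{ss}_\chi$ by the matching in the previous paragraph. The main obstacle I anticipate is the bookkeeping in the Hilbert--Mumford computation on $W_0$ and $W_{02}$: one has to be careful that the relevant limits $\lim_{s\to0}\lambda(s)\cdot p$ are taken \emph{in $\Spec\cR$} (equivalently in the closure of the component), so that the quadratic relations like $w_{02}^2u_0+w_{12}^2+w_{23}^2+w_{24}^2=0$ genuinely constrain which $\lambda$ have a limit, and one does not accidentally declare a point unstable using a $\lambda$ whose limit leaves the variety. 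A clean way to sidestep most of this is to invoke Proposition~\ref{proposition:stability-for-D8} up front for the $ss=s$ equality, and then only compute, for each component, the (elementary, and checkable in Macaulay2~\cite{M2}) description of $W_\ast\cap(\CC^{12})^{ss}_\chi$ and observe it is empty for $W_u,W_0$ and nonempty for $W_{02},W_2$; this reduces the proof to the "straightforward" verification the paper already signals.
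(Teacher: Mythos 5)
There is a concrete error at the start of your argument that derails most of what follows: you read Lemma~\ref{lemma:semistability-for-D8} as saying that ``in every summand at least one of $w_{13},w_{14},w_{34}$ is required to be nonzero.'' That is not what the lemma says. The summands are $\{w_{02}w_{0i}\neq 0\}$ and $\{w_{0i}w_{2j}\neq 0\}$ for $i,j\in\{1,3,4\}$, so the only variables that appear are $w_{02}$, $w_{01},w_{03},w_{04}$ and $w_{12},w_{23},w_{24}$; the coordinates $w_{13},w_{14},w_{34}$ occur nowhere (you appear to have collapsed $w_{0i}w_{2j}$ into $w_{ij}$). Consequently your intermediate conclusion $W\cap(\CC^{12})^{ss}_\chi=\emptyset$ is false, and the ensuing worry that ``stability must be recomputed intrinsically on $\Spec\cR$'' is a red herring: for a closed $\TT$-invariant subvariety of affine space, $\chi$-semiinvariants restrict surjectively, so the semistable locus of $\Spec\cR$ (and of any closed invariant subset such as $W$ or its components) \emph{is} the intersection with $(\CC^{12})^{ss}_\chi$ --- which is exactly what the lemma asserts and what its one-line proof in the paper relies on. The Hilbert--Mumford detour you then embark on is both unnecessary and not carried through (the analysis of $W_0$ trails off, and the claim that $w_{02}\neq 0$ together with some $w_{2j}\neq 0$ suffices for semistability on $W_{02}$ contradicts Lemma~\ref{lemma:semistability-for-D8}, which always demands some $w_{0i}\neq 0$ with $i\in\{1,3,4\}$).

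The correct verification, which is all the lemma needs, takes three lines once Lemma~\ref{lemma:semistability-for-D8} is read correctly: every summand of $(\CC^{12})^{ss}_\chi$ requires $w_{0i}\neq 0$ for some $i\in\{1,3,4\}$, and these three coordinates vanish identically on $W_u$ and on $W_0$ (see Lemma~\ref{lemma:components-over-central-fiber}), so $W_u\cap(\CC^{12})^{ss}_\chi=W_0\cap(\CC^{12})^{ss}_\chi=\emptyset$; on $W_2$ only the summands $\{w_{02}w_{0i}\neq 0\}$ survive (since $w_{12}=w_{23}=w_{24}=0$ there) and they are nonempty; on $W_{02}$ both types of summands can be nonempty. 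Combined with Proposition~\ref{proposition:stability-for-D8} (every $\chi$-semistable point of $\Spec\cR$ is stable, and $W\subseteq\Spec\cR$), this gives $W^{ss}=W^{s}=W_2^{s}\cup W_{02}^{s}$. Your closing paragraph does correctly identify this as the ``clean way,'' and it coincides with the paper's intended proof --- but as written the proposal never actually performs that componentwise check correctly, and the one place where it attempts it reaches the wrong answer.
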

\begin{proof}
This follows from the description of $\chi$-stable points in Lemma~\ref{lemma:semistability-for-D8}.
\end{proof}

Now we look at the quotient of the set of stable points in $W_2$, which gives the component isomorphic to~$\bP^2$.

\begin{proposition}\label{proposition:PP2-component}
The quotient of~$W_{2}^{s}$ by the action of~$\TT$ is isomorphic to~$\PP^{2}$. The quotient of the intersection $W_{02}^{s}\cap W_{2}^{s}$ is mapped by this isomorphism to a smooth quadric in~$\PP^{2}$.
\end{proposition}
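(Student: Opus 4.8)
First I would pin down the coordinates on the component $W_2$. By Lemma~\ref{lemma:components-over-central-fiber} the ideal of $W_2$ forces $w_{12}=w_{23}=w_{24}=w_{13}=w_{14}=w_{34}=u_0=0$, so $W_2$ is cut out inside the $\bC^4$ with coordinates $w_{01},w_{03},w_{04},u_2$ (and the remaining free coordinate $w_{02}$) by the single equation $w_{02}^2 u_2 + w_{01}^2 + w_{03}^2 + w_{04}^2 = 0$; thus $W_2\cong \{w_{02}^2 u_2 = -(w_{01}^2+w_{03}^2+w_{04}^2)\}\subseteq \bC^5$ with coordinates $(w_{01},w_{03},w_{04},w_{02},u_2)$. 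On this component the torus $\bT = \{(t_0,t_2)\}$ acts by weight $t_0$ on $w_{01},w_{03},w_{04}$, by $t_0 t_2$ on $w_{02}$, and by $t_2^{-2}$ on $u_2$ (the other coordinates being zero). Next I would intersect with the semistable locus: by Lemma~\ref{lemma:semistability-for-D8} and Lemma~\ref{lemma:stability-over-central-fiber}, a point of $W_2$ is $\chi$-stable iff $w_{0i}\neq 0$ for some $i\in\{1,3,4\}$, i.e. iff $(w_{01},w_{03},w_{04})\neq 0$; in particular every stable point of $W_2$ has $(w_{01},w_{03},w_{04})\neq (0,0,0)$.

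Then I would compute the quotient directly. On $W_2^s$ one has $(w_{01},w_{03},w_{04})\neq 0$ with $\bT$-weight $t_0$, so the sub-action of the first $\bC^*$ factor already has quotient $\bP^2$ with homogeneous coordinates $[w_{01}:w_{03}:w_{04}]$; after this quotient the defining equation becomes $u_2 = -(w_{01}^2+w_{03}^2+w_{04}^2)/w_{02}^2$, which simply solves for $u_2$, and $w_{02}$ still carries a free $\bC^*$-action (weight $t_2$ on $w_{02}$, hence weight $t_2^{-2}$ is determined on $u_2$, consistently). So the full quotient $W_2^s/\bT$ is exactly $\bP^2$; concretely the composite map $W_2^s\to\bP^2$, $(w_{01},w_{03},w_{04},w_{02},u_2)\mapsto[w_{01}:w_{03}:w_{04}]$, is $\bT$-invariant, surjective, and its fibers are single $\bT$-orbits (the stabilizer is trivial by Proposition~\ref{proposition:stability-for-D8}), which identifies the GIT quotient with $\bP^2$. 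For the second statement, intersect with $W_{02}$: from Lemma~\ref{lemma:components-over-central-fiber}, $W_{02}\cap W_2$ adds to the vanishing locus of $W_2$ also $u_2=0$ (since $u_2$ generates part of the ideal of $W_{02}$), and then the equation $w_{02}^2 u_2 + w_{01}^2+w_{03}^2+w_{04}^2=0$ degenerates to $w_{01}^2+w_{03}^2+w_{04}^2=0$. Under the identification above this is precisely the conic $\{w_{01}^2+w_{03}^2+w_{04}^2=0\}\subseteq\bP^2$, which is a smooth quadric.

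The main things to be careful about are: verifying that the map to $\bP^2$ really realizes the GIT quotient rather than just a dominant map — this uses that on the stable locus the $\bT$-orbits are closed and the stabilizers trivial (Propositions~\ref{proposition:stability-for-D8}, \ref{proposition:smoothness-of-stable-pts-D8}), so one can invoke the standard fact that a geometric quotient is determined by its orbit space; and checking that no component of $W_{02}\cap W_2$ is lost, i.e. that the scheme-theoretic intersection described by the union of the two ideals is exactly $\{u_2 = w_{12}=\cdots=0,\ w_{01}^2+w_{03}^2+w_{04}^2=0\}$. I expect the only mild obstacle to be bookkeeping: confirming which generators of the ideals of $W_{02}$ and $W_2$ survive the intersection and that the residual relation is the quadric and nothing more; this is a short elimination that can be double-checked in Macaulay2 as elsewhere in the section. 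Everything else is a direct orbit-space computation.
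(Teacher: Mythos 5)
There is one genuine error in your reading of the stability condition, and it matters for the argument as you have written it. On $W_2$ we have $w_{12}=w_{23}=w_{24}=0$ (Lemma~\ref{lemma:components-over-central-fiber}), so the second union in Lemma~\ref{lemma:semistability-for-D8} is empty there, and a point of $W_2$ is semistable if and only if $w_{02}w_{0i}\neq 0$ for some $i\in\{1,3,4\}$; that is, stability forces \emph{both} $(w_{01},w_{03},w_{04})\neq 0$ \emph{and} $w_{02}\neq 0$. You drop the condition $w_{02}\neq 0$, and your subsequent steps silently depend on it: the rewriting $u_2=-(w_{01}^2+w_{03}^2+w_{04}^2)/w_{02}^2$ divides by $w_{02}$, and the claim that the fibers of $(w_{01},w_{03},w_{04},w_{02},u_2)\mapsto[w_{01}:w_{03}:w_{04}]$ are single $\TT$-orbits is false on the locus you describe: over a point of the conic $w_{01}^2+w_{03}^2+w_{04}^2=0$ the points with $w_{02}=0$ satisfy the defining equation of $W_2$ with $u_2$ arbitrary, giving a one-parameter family of additional (non-closed) orbits in the fiber. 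So as stated the stable locus is too big and the ``fibers are single free orbits'' argument does not go through on it. The fix is immediate — restore $w_{02}\neq 0$, which is exactly what Lemma~\ref{lemma:stability-over-central-fiber} together with Lemma~\ref{lemma:semistability-for-D8} gives, and what the paper uses to place $W_2^s$ inside the chart $\{w_{02}\neq 0\}\cong\Spec\CC[w_{02},w_{01},w_{03},w_{04}]_{w_{02}}$.

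With that correction your argument is essentially the paper's: both restrict to the chart $w_{02}\neq 0$, realize the quotient as $\PP^2$ with homogeneous coordinates $[w_{01}:w_{03}:w_{04}]$ (the paper via a graded ring homomorphism $\CC[z_1,z_3,z_4]\to\CC[w_{02},w_{01},w_{03},w_{04}]_{w_{02}}$ and a comparison of the open covers $\{z_i\neq 0\}$, $\{w_{0i}\neq 0\}$; you via a two-stage orbit-space computation using triviality of stabilizers from Proposition~\ref{proposition:stability-for-D8}), and identify $W_{02}\cap W_2$ in that chart with $u_2=0$, hence with the smooth conic $w_{01}^2+w_{03}^2+w_{04}^2=0$. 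That last part of your proposal is correct as written.
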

\begin{proof}
By Lemma~\ref{lemma:components-over-central-fiber} we have
\begin{equation*}
W_{2} \cong\Spec \CC[w_{02},w_{01},w_{03},w_{04},u_{2}]/(w_{02}^{2}u_{2} + w_{01}^{2} + w_{03}^{2} + w_{04}^{2}).
\end{equation*}
Lemma~\ref{lemma:stability-over-central-fiber} gives an open cover of $W_{2}^{s}$ consisting of three open subsets defined by conditions $w_{02}w_{01}\neq 0$, $w_{02}w_{03}\neq 0$, $w_{02}w_{04}\neq 0$. In particular, $W_{2}^{s}$ is contained in the affine open subset given by $w_{02}\neq 0$, which is clearly isomorphic to
$\Spec \CC[w_{02},w_{01},w_{03},w_{04}]_{w_{02}}$.

Consider the ring homomorphism
$\CC[z_{1},z_{3},z_{4}]\to \Spec \CC[w_{02},w_{01},w_{03},w_{04}]_{w_{02}}$ defined by $z_{i}\mapsto w_{0i}$ for $i=1,3,4$.

It becomes the graded ring homomorphism if we take the standard $\ZZ$-grading on $\CC[z_{1},z_{3}, z_{4}]$ associated with the $\CC^{*}$ action by homotheties, the $\ZZ^{2}$-grading induced by $\TT$-action on the second ring and the monomorphism $\ZZ\to \ZZ^{2}$ given by $n \mapsto (n,0)$. Thus the corresponding morphism of varieties is equivariant with respect to considered torus actions.

Moreover, one may verify that it induces the isomorphism of quotients by looking at open covers $\{z_{i}\neq 0\}$ and  $\{w_{0i}\neq 0\}$ for $i=1,3,4$ of sets of stable points. Hence indeed we obtain~$\PP^{2}$ as a quotient.

To prove the second statement, observe that the intersection $W_{02}\cap W_{2}$ is defined in the open subset $\{w_{02}\neq 0\}$ of $W_{2}$ by equation $w_{01}^{2} + w_{02}^{2} + w_{03}^{2} = 0$. This corresponds to the subvariety of $\PP^{2}$ given by homogeneous equation $z_{1}^{2} + z_{3}^{2} + z_{4}^{2} = 0$ which is a smooth quadric.
\end{proof}

Finally, we recover the second component, which is present in the central fiber for both constructed resolutions.

\begin{proposition}\label{proposition:FF4-component}
The quotient of $W_{02}^{s}$ by the action of $\TT$ is isomorphic to the Hirzebruch surface $\FF_{4}$.
\end{proposition}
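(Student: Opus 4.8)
The plan is to mimic the strategy used for Proposition~\ref{proposition:PP2-component}: present $W_{02}^s$ explicitly, cover it by the three $\chi$-stable affine charts coming from Lemma~\ref{lemma:semistability-for-D8}, and identify the $\TT$-quotient chart by chart, checking that the gluing produces $\FF_4$. From Lemma~\ref{lemma:components-over-central-fiber}, $W_{02}$ sits in the coordinates $w_{01},w_{02},w_{03},w_{04},w_{12},w_{23},w_{24}$ (with $w_{13}=w_{14}=w_{34}=u_0=u_2=0$) and is cut out by the five displayed quadrics; the $\TT$-weights are $t_0$ on $w_{01},w_{03},w_{04}$, $t_2$ on $w_{12},w_{23},w_{24}$, and $t_0t_2$ on $w_{02}$. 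By Lemma~\ref{lemma:stability-over-central-fiber}, $W_{02}^s$ is covered by the opens $\{w_{0i}w_{2j}\neq 0\}$ for $i,j\in\{1,3,4\}$ together with $\{w_{02}w_{0i}\neq 0\}$, $i\in\{1,3,4\}$; the key point is that $w_{02}$ need not vanish, so on the stable locus the relations $w_{01}w_{23}=w_{03}w_{12}$, $w_{04}w_{12}=w_{01}w_{24}$, $w_{03}w_{24}=w_{04}w_{23}$ say that the vectors $(w_{01},w_{03},w_{04})$ and $(w_{12},w_{23},w_{24})$ are proportional once one of them is nonzero.

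First I would dispose of the quadric relation. On the stable locus both ``linear'' triples are nonzero, hence proportional; write $(w_{12},w_{23},w_{24})=\lambda(w_{01},w_{03},w_{04})$ for a function $\lambda$ on the stable locus (well-defined up to the obvious chart-dependence), which carries $\TT$-weight $t_2t_0^{-1}$. Then the two ``diagonal'' quadrics $w_{01}^2+w_{03}^2+w_{04}^2=0$, $w_{12}^2+w_{23}^2+w_{24}^2=0$ and the mixed one $w_{01}w_{12}+w_{03}w_{23}+w_{04}w_{24}=0$ all reduce to the single equation $w_{01}^2+w_{03}^2+w_{04}^2=0$ on the triple $(w_{01},w_{03},w_{04})$. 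So, up to the $\TT$-action, $W_{02}^s$ is described by a point of the conic $\{[a:b:c]\in\PP^2 : a^2+b^2+c^2=0\}$ (the class of the first triple, which is exactly the quadric found in Proposition~\ref{proposition:PP2-component}), together with the scaling data of $w_{02}$ and $\lambda$. This identifies $W_{02}^s/\TT$ as the total space of a line bundle (or its projectivization) over this $\PP^1$; concretely, after parametrizing the conic by $\PP^1$ one gets the two sections $w_{02}$ and $\lambda$ with opposite weights, and bookkeeping the degrees of the transition functions on the two standard $\PP^1$-charts should give a $\PP^1$-bundle over $\PP^1$ whose clutching cocycle has degree $\pm 4$, i.e. $\FF_4$.

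To make this rigorous I would, as in Proposition~\ref{proposition:PP2-component}, build an explicit $\TT$-equivariant ring (or graded-ring) homomorphism from the Cox ring of $\FF_4 = \PP(\Oscr\oplus\Oscr(4))$ — the polynomial ring $\CC[s_0,s_1,v_0,v_1]$ with the standard $\ZZ^2$-grading — into the coordinate ring of $W_{02}^s$, sending $s_0,s_1$ to a basis of the degree-$t_0$ part among $w_{01},w_{03},w_{04}$ (using the conic to express the third), and $v_0,v_1$ to suitable products of $w_{02}$, $\lambda$ and powers of the $s_i$ that land in the right graded piece, and then check on the affine charts $\{w_{0i}\neq 0\}$ that this induces an isomorphism of geometric quotients. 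The Hirzebruch index is pinned down by tracking which power of the base coordinate appears when passing between the chart $w_{01}\neq 0$ and the chart $w_{03}\neq 0$: the weight $t_0t_2$ of $w_{02}$ against the weight $t_0$ of the $w_{0i}$ forces a jump of $2$ on each of the two projective degrees, giving $4$ in total.

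The main obstacle I expect is the last bookkeeping step: extracting the self-intersection number $-4$ of the distinguished section cleanly from the torus weights, i.e. verifying that the correct twist is $\Oscr(4)$ rather than $\Oscr(2)$ or $\Oscr(8)$. One has to be careful that the quadric relation $a^2+b^2+c^2=0$ is a \emph{smooth} conic, so it is abstractly a $\PP^1$ but embedded by $\Oscr(2)$, and the degree count must be done relative to that embedding; conflating the degree on the conic with the degree on the ambient $\PP^2$ is exactly where a factor of $2$ could be lost or gained. I would resolve this by doing the computation entirely on the two genuine $\PP^1$-charts of the normalization of the conic, where all transition functions are explicit monomials, and simply reading off the exponent. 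Everything else — the reduction of the five quadrics to one, the chart-by-chart quotient, the freeness of the $\TT$-action from Proposition~\ref{proposition:stability-for-D8} — is routine, and the primary decomposition in Lemma~\ref{lemma:components-over-central-fiber} can be re-verified in Macaulay2 if desired.
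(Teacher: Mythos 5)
Your proposal is correct in outline and arrives at the right answer, but by a genuinely different route from the paper. The paper makes the linear change of coordinates $w_{03}'=-w_{03}-iw_{04}$, etc., after which the ideal of $W_{02}$ becomes binomial; it then realizes $W_{02}$ as the affine toric variety of an explicit cone $\sigma\subset\RR^4$, identifies the unstable locus with two torus-orbit closures (faces $\sigma_1,\sigma_2$), and reads off the fan of the GIT quotient by the standard toric recipe, obtaining the fan of $\FF_4$. You instead stay in the original coordinates: the three minor-type quadrics say that the $2\times 3$ matrix with rows $(w_{01},w_{03},w_{04})$ and $(w_{12},w_{23},w_{24})$ has rank at most one, so on the stable locus $W_{02}^s\cong C^{*}\times(\CC^2\setminus\{0\})$, where $C^{*}$ is the punctured affine cone over the conic and the second factor has coordinates $(\lambda,w_{02})$; the quotient is then the $\PP^1$-bundle associated to the $\CC^*$-torsor $C^{*}\to\PP^1$ (the complement of the zero section in $\Oscr_{\PP^1}(-2)$) with fiber $\PP^1=\{[\lambda:w_{02}]\}$, on which $t_0$ acts through the ratio $w_{02}/\lambda$ with weight $2$, giving the twist $\Oscr(\pm 4)$ and hence $\FF_4$. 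Your bookkeeping $4=2\times 2$ — one factor because the conic is embedded by $\Oscr(2)$, one from the relative $t_0$-weight of $w_{02}$ against $\lambda$ — is exactly the right accounting, and this argument is arguably more conceptual than the fan computation, at the price of having to set up the associated-bundle identification carefully on charts. One small correction: it is \emph{not} true that both triples are nonzero on the stable locus; the second triple $(w_{12},w_{23},w_{24})$ vanishes identically at stable points with $w_{02}\neq 0$ and $\lambda=0$, which map to one of the distinguished sections of the ruling. What Lemma~\ref{lemma:semistability-for-D8} actually guarantees is that some $w_{0i}$ with $i\in\{1,3,4\}$ is nonzero, and that alone lets you solve for $\lambda$ (possibly zero); the proportionality must therefore be stated in the asymmetric form $(w_{12},w_{23},w_{24})=\lambda\,(w_{01},w_{03},w_{04})$, never the other way around. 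With that adjustment the reduction of the six quadrics to the single conic equation, and the rest of your plan, goes through.
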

\begin{proof}
We may consider $W_{02}$ as an affine subvariety of $\CC^{7}$ with coordinates $w_{02},w_{01},w_{03},w_{04},w_{12},w_{23},w_{24}$. After substituting $w'_{03} = -w_{03} - iw_{04}$, $w'_{04} = w_{03} - iw_{04}$, $w'_{23} = -w_{23} - iw_{24}$, $w'_{24} = w_{23} - iw_{24}$ 
we see that $W_{02}$ is defined by a binomial ideal generated by 
$$
\begin{array}{llll}
w_{01}w_{23} - w_{03}w_{12} & w_{03}w_{24} - w_{04}w_{23} & w_{04}w_{12} - w_{01}w_{24} & w_{01}w_{12} - w_{03}w_{24}\\
w_{01}^{2} - w_{03}w_{04} & w_{12}^{2} - w_{23}w_{24} & & \\
\end{array}
$$
where, by abuse of notation, new coordinates are again called $w_{ij}$. Thus $W_{02}$ is an affine toric subvariety of $\CC^{7}$.

It corresponds to the cone $\sigma \subset N_{\bR} \simeq \RR^{4}$ (where the one-parameter subgroup lattice~$N$ is the standard lattice $\bZ^4 \subseteq \bR^4$) with rays
\begin{equation*}
(0,1,1,-1),\ (0,1,2,-1), \ (0,0,0,1), \ (0,1,0,1), \ (1,-1,-1,1).
\end{equation*}
One may check it by computing the Hilbert basis of the dual cone and the corresponding ideal of relations, e.g. in Macaulay2,~\cite{M2}. Denote the associated big torus by~$\TT_N$.

By Lemma~\ref{lemma:stability-over-central-fiber} the complement of the set of $\chi$-stable points on~$W_{02}$ is a sum of two irreducible toric, hence also $\bT_N$-invariant, subvarieties 
$$Y_{1} = V(w_{01},w_{03},w_{04},w_{12}^{2} - w_{23}w_{24}), \qquad Y_{2} = V(w_{02},w_{12},w_{23},w_{24},w_{01}^{2} - w_{03}w_{04})$$ of $\CC^{7}$. Note that $\codim_{W_{02}} Y_{1} = 1$ and $\codim_{W_{02}} Y_{2} = 2$.

Since $Y_{1},Y_{2}$ are irreducible and $\TT_N$-invariant, they are closures of some orbits $O_{1},O_{2}$ of $\TT_N$-action on $W_{02}$. They correspond to faces $\sigma_{1},\sigma_{2}$ of $\sigma$ of dimension~$1$ and~$2$ respectively.

The description of the ideal of the intersection of orbit closure with open affine cover in terms of the fan of a toric variety, see~\cite[3.2.7]{CLS}, gives us in particular the conditions
\begin{align*}
& (1,1,0,0),(1,0,1,0),(1,2,-1,0) \in (\sigma_{1}^{\vee} \setminus \sigma_{1}^{\perp}),\\
& (1,0,0,0), (0,0,1,1), (0,-1,2,1), (0,1,0,1)\in (\sigma_{2}^{\vee} \setminus \sigma_{2}^{\perp}).
\end{align*}

This suffices to deduce that~$\sigma_{1}$ is spanned by $(0,1,1,-1)$ and~$\sigma_{2}$ by $(0,0,0,1)$ and $(1,-1,-1,1)$.

Now we may describe the fan of the quotient of $W_{02}^{s}$ by the $\TT$-action as follows (cf.~\cite[Thm.~5.1]{Hamm}). We take the fan obtained by removing all cones containing either $\sigma_1$ or $\sigma_2$ from the fan of $\sigma$ and compute its image under the homomorphism $N_{\bR} \to \bR^2$ corresponding to the quotient of~$\bT_N$ by~$\bT$.

On the level of monomial lattices the embedding $\TT\cong (\CC^{*})^{2}\subset \TT_{W_{02}}$ can be given by characters corresponding to the columns of
\begin{equation*}
\begin{pmatrix}
1 & 0 & 0 & 0\\
1 & -1 & -1 & 2
\end{pmatrix}.
\end{equation*} 
This can be understood by looking at the action of~$\bT$ in the original coordinates on~$\bC^7$ and rewriting it in terms of a basis of the monomial lattice~$M$ of~$\bT_N$.

Thus on the level of one-parameter subgroup lattices we take the homomorphism given by a basis of the kernel of the matrix above:
\begin{equation*}
\begin{pmatrix}
0 & -1 & 1 & 0\\
0 & 2 & 0 & 1
\end{pmatrix}.
\end{equation*}

One can compute now that the images of faces of $\sigma$ not containing $\sigma_1$ nor $\sigma_2$ form a complete fan in~$\ZZ^{2}$ with rays $(1,1),(1,0),(-1,3),(-1,0)$, which is the fan of $\FF_{4}$.
\end{proof}

Proposition~\ref{proposition:central-fiber-D8} is thus a direct consequence of Propositions~\ref{proposition:PP2-component} and~\ref{proposition:FF4-component}.


\section{{Binary tetrahedral group $G_4$}}
\label{section:G4}

Consider the symplectic representation $G = G_{4}\subset \Sp(\CC^{4},\omega)$ of the binary tetrahedral group which is generated by the following two matrices:
\begin{equation*}
\iota =
\begin{pmatrix}
i & 0 & 0 & 0\\
0 & -i & 0 & 0\\
0 & 0 & i & 0\\
0 & 0 & 0 & -i\\
\end{pmatrix}, \qquad
\tau = -\frac{1}{2}
\begin{pmatrix}
(1+i)\epsilon & (-1+i)\epsilon & 0 & 0\\
(1+i)\epsilon & (1-i)\epsilon & 0 & 0\\
0 & 0 & (1+i)\epsilon^{2} & (-1+i)\epsilon^{2} \\
0 & 0 & (1+i)\epsilon^{2} & (1-i)\epsilon^{2} \\
\end{pmatrix},
\end{equation*}
where $\epsilon$ is a $3$rd root of unity and $\omega = dx_{1}\wedge dy_{1} + dx_{2}\wedge dy_{2}$ (see~\cite{LehnSorger}).
We sum up the basic properties of this representation in the next proposition.

\begin{proposition}\leavevmode
\begin{itemize}
\item The commutator subgroup $H = [G,G]$ is isomorphic to quaternion group $Q_{8}$ and is generated by matrices $\iota, \xi$, where
\begin{equation*}
\xi = \begin{pmatrix}
0 & i & 0 & 0\\
i & 0 & 0 & 0\\
0 & 0 & 0 & i\\
0 & 0 & i & 0
\end{pmatrix}
\end{equation*}
In particular it does not contain any symplectic reflection.

\item The quotient group $\Ab(G) = G/H$ is isomorphic to the group $\ZZ_{3}$ and is generated by the image of $\tau$.

\item There are two four-elements conjugacy classes of symplectic reflections, represented by $T_{1} = \tau$ and $T_{2} = \tau^{2}$. Hence by the McKay correspondence there are two components $E_{1},E_{2}$ of the exceptional divisor corresponding to conjugacy classes of symplectic reflections $T_{1},T_{2}$ respectively.

\item The singular locus of the quotient $V/G$ is irreducible and consists of one transversal $A_2$ singularity since the symplectic reflection $\tau$ is of order $3$ and there are no reflections in the commutator subgroup (see Remark~\ref{remark:An-type-singularities}). In particular the intersection matrix $(E_{i}.C_{j})_{i,j}$ is a Cartan matrix of type $A_2$:
\begin{equation*}
\begin{pmatrix}
-2 & 1\\
1 & -2
\end{pmatrix}
\end{equation*}
\item Let $\nu_{i}$ be the monomial valuation $\CC(V)^{[G,G]}\to \ZZ$ associated with symplectic reflection $T_{i}$. Then $\nu_{i}|_{\CC(V)^{G}} = 3\nu_{E_{i}}, \ i=1,2$, by Lemma~\ref{lemma:restricting-monomial-valuation}.
\end{itemize}
\end{proposition}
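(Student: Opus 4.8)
The proposition collects standard facts about the group $G_{4}$, and the plan is to establish them by direct computation with the $4\times 4$ matrices, in the following order.

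\emph{The commutator subgroup and $\Ab(G)$.} First I would verify $\iota^{2}=\xi^{2}=(\iota\xi)^{2}=-\id$ and $\xi\iota=-\iota\xi$; these identities show that $\langle\iota,\xi\rangle$ has order $8$ and satisfies the defining relations of $Q_{8}$. Each of its six non-central elements $\pm\iota,\pm\xi,\pm\iota\xi$ is block-diagonal with $2\times 2$ blocks of characteristic polynomial $\lambda^{2}+1$, hence has eigenvalues $i,i,-i,-i$; in particular $\rank(g-\id)=4$, so $Q_{8}$ contains no symplectic reflection. Computing $\tau\iota\tau^{-1}$ and $\tau\xi\tau^{-1}$ shows $\langle\iota,\xi\rangle\trianglelefteq G$, and since $G=\langle\iota,\tau\rangle$ with $\iota\in\langle\iota,\xi\rangle$, $\tau^{3}=\id$ and $\tau\notin\langle\iota,\xi\rangle$, the quotient $G/\langle\iota,\xi\rangle$ is cyclic of order $3$; in particular $|G|=24$ and $[G,G]\subseteq\langle\iota,\xi\rangle$. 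For the reverse inclusion, observe that $[G,G]$ is a $G$-normal subgroup of $Q_{8}$: it is not contained in the centre, because $[\iota,\tau]$ equals $\pm\iota\xi$, which is non-central; and it cannot be one of the three $\ZZ_{4}$-subgroups of $Q_{8}$, since conjugation by $\tau$ permutes those three subgroups cyclically (as the computation of $\tau\iota\tau^{-1}$ already shows), so none of them is normal in $G$. Hence $[G,G]=\langle\iota,\xi\rangle\cong Q_{8}$ and $\Ab(G)=G/[G,G]=\langle\bar\tau\rangle\cong\ZZ_{3}$.

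\emph{The symplectic reflections.} Diagonalising the two $2\times 2$ blocks of $\tau$ — the first with characteristic polynomial $\lambda^{2}+\epsilon\lambda+\epsilon^{2}$, the second $\lambda^{2}+\epsilon^{2}\lambda+\epsilon$, using that $\epsilon$ is a primitive cube root of unity — one finds that $\tau$ has eigenvalues $1,1,\epsilon,\epsilon^{-1}$; thus $\tau$, and likewise $\tau^{2}$, is a symplectic reflection of order $3$. Using that $G$ is the binary tetrahedral group, so that $G/\{\pm\id\}\cong A_{4}$ and the eight order-$3$ elements of $G$ split into two $G$-conjugacy classes of size $4$ — represented by $\tau$ and $\tau^{2}$, which are not conjugate because their images in $A_{4}$ are mutually inverse $3$-cycles — together with the observation that $-\id$, the six order-$4$ elements of $Q_{8}$, and the eight order-$6$ elements $-\tau^{\pm1}$ and their conjugates all satisfy $\rank(g-\id)=4$, I would conclude that the symplectic reflections of $G$ are exactly the members of the classes of $\tau$ and of $\tau^{2}$. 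Kaledin's McKay correspondence~\cite{KaledinMcKay} then yields the two exceptional components $E_{1},E_{2}$.

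\emph{The singular locus and the intersection matrix.} The plane $L:=\ker(\tau-\id)$ is $2$-dimensional and symplectic, and $\ker(\tau^{2}-\id)=L$ as well; moreover the only $g\in G$ with $L\subseteq\ker(g-\id)$ are $1,\tau,\tau^{2}$, so the stabiliser of a generic point of $L$ is $\langle\tau\rangle\cong\ZZ_{3}$, acting on the transverse $\CC^{2}\cong V/L$ as a cyclic subgroup of $\SL_{2}(\CC)$ and thus producing a transversal $A_{2}$ singularity (cf.\ Remark~\ref{remark:An-type-singularities}). Since the eight symplectic reflections fix the four $G$-translates of $L$, and these translates have the same image in $V/G$, the singular locus is the irreducible surface which is that image. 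The statement about the intersection matrix then follows from~\cite[Thm.~1.3]{WierzbaWisniewski} and~\cite[Thm.~1.4]{AW}, which identify $(E_{i}.C_{j})_{i,j}$ with the Cartan matrix of the corresponding du Val type, here $A_{2}$ (consistently with the two nodes of the $A_{2}$ diagram matching the two conjugacy classes). Finally $\nu_{i}|_{\CC(V)^{G}}=3\nu_{E_{i}}$ is immediate from Lemma~\ref{lemma:restricting-monomial-valuation} since $r_{i}=\ord(T_{i})=3$. The only step here that is not pure matrix arithmetic is pinning down $[G,G]$ as all of $Q_{8}$, which I expect to be the main (minor) obstacle: ruling out the three intermediate $\ZZ_{4}$'s via the cyclic permutation of these subgroups by $\tau$, together with the standard fact that the two order-$3$ conjugacy classes of the binary tetrahedral group are genuinely distinct.
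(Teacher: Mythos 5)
Your proof is correct. The paper offers no proof of this proposition --- it is stated as a bare list of ``basic properties'' of the representation --- and your direct verification (the $Q_8$ relations for $\langle\iota,\xi\rangle$, the block-wise eigenvalue computations identifying the symplectic reflections as exactly the eight order-$3$ elements in two classes of size $4$, and the normal-subgroup argument pinning down $[G,G]=Q_8$ by ruling out the centre and the three $\ZZ_4$'s permuted by $\tau$) is precisely the routine check the authors leave to the reader, with the remaining items following from the cited general results exactly as you say.
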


\begin{table}[h!]
\begin{tabular}{l}
eigenvalue $1$: \\
\\
$\phi_1 = {y}_{1} {x}_{2}-{x}_{1} {y}_{2}$ \\
$\phi_2 =  {x}_{2}^{5} {y}_{2}-{x}_{2} {y}_{2}^{5}$ \\
$\phi_3 = {x}_{1}^{5} {y}_{1}-{x}_{1} {y}_{1}^{5}$ \\

$\phi_{4} = {x}_{1}^{4}+(-4 a^{2}+2) {x}_{1}^{2} {y}_{1}^{2}+{y}_{1}^{4}$\\
$\phi_{5} = {x}_{2}^{4}+(4 a^{2}-2) {x}_{2}^{2} {y}_{2}^{2}+{y}_{2}^{4}$\\
$\phi_{6}= {x}_{1} {x}_{2}^{3}+(-2 a^{2}+1) {y}_{1} {x}_{2}^{2} {y}_{2}+(-2 a^{2}+1)
       {x}_{1} {x}_{2} {y}_{2}^{2}+{y}_{1} {y}_{2}^{3}$\\
$\phi_{7} =  {x}_{1}^{3} {x}_{2}+(2 a^{2}-1) {x}_{1} {y}_{1}^{2} {x}_{2}+(2 a^{2}-1)
       {x}_{1}^{2} {y}_{1} {y}_{2}+{y}_{1}^{3} {y}_{2}$\\
$\phi_{8} = (2 a^{2}-2) {x}_{1}^{2} {y}_{1} {x}_{2}^{3}+(2 a^{2}-2) {x}_{1}^{3}
       {x}_{2}^{2} {y}_{2}+(-2 a^{2}+2) {y}_{1}^{3} {x}_{2} {y}_{2}^{2}+(-2
       a^{2}+2) {x}_{1} {y}_{1}^{2} {y}_{2}^{3}$\\
\\
eigenvalue $\epsilon$:\\
\\
$\phi_9=-a^{2} {x}_{1}^{2} {x}_{2}^{2}+(-(1/3) a^{2}+2/3) {y}_{1}^{2} {x}_{2}^{2}+(-(4/3) a^{2}+8/3) {x}_{1} {y}_{1} {x}_{2} {y}_{2}+$\\
$+(-(1/3)a^{2}+2/3) {x}_{1}^{2} {y}_{2}^{2}-a^{2} {y}_{1}^{2} {y}_{2}^{2}$
\\
$\phi_{10} = {x}_{2}^{4}+(-4 a^{2}+2) {x}_{2}^{2} {y}_{2}^{2}+{y}_{2}^{4}$\\
$\phi_{11}={x}_{1}^{3} {x}_{2}+(-2 a^{2}+1) {x}_{1} {y}_{1}^{2} {x}_{2}+(-2 a^{2}+1)
       {x}_{1}^{2} {y}_{1} {y}_{2}+{y}_{1}^{3} {y}_{2}$\\
$\phi_{12}=-10 {x}_{1}^{4} {y}_{1} {x}_{2}+2 {y}_{1}^{5} {x}_{2}-2 {x}_{1}^{5}
       {y}_{2}+10 {x}_{1} {y}_{1}^{4} {y}_{2}$\\
       
$\phi_{13}= 2 {x}_{1} {y}_{1} {x}_{2}^{4}+4 {x}_{1}^{2} {x}_{2}^{3} {y}_{2}-4
       {y}_{1}^{2} {x}_{2} {y}_{2}^{3}-2 {x}_{1} {y}_{1} {y}_{2}^{4}$\\
\\
eigenvalue $\epsilon^{2}$:\\
\\
$\phi_{14} = (a^{2}-1) {x}_{1}^{2} {x}_{2}^{2}+((1/3) a^{2}+1/3) {y}_{1}^{2}
       {x}_{2}^{2}+((4/3) a^{2}+4/3) {x}_{1} {y}_{1} {x}_{2} {y}_{2}+$\\
$((1/3)a^{2}+1/3) {x}_{1}^{2} {y}_{2}^{2}+(a^{2}-1) {y}_{1}^{2} {y}_{2}^{2}$\\

$\phi_{15} = {x}_{1}^{4}+(4 a^{2}-2) {x}_{1}^{2} {y}_{1}^{2}+{y}_{1}^{4}$\\

$\phi_{16} = {x}_{1} {x}_{2}^{3}+(2 a^{2}-1) {y}_{1} {x}_{2}^{2} {y}_{2}+(2 a^{2}-1)
       {x}_{1} {x}_{2} {y}_{2}^{2}+{y}_{1} {y}_{2}^{3}$\\

$\phi_{17}=2 {y}_{1} {x}_{2}^{5}+10 {x}_{1} {x}_{2}^{4} {y}_{2}-10 {y}_{1} {x}_{2}
       {y}_{2}^{4}-2 {x}_{1} {y}_{2}^{5}$\\

$\phi_{18} = -4 {x}_{1}^{3} {y}_{1} {x}_{2}^{2}-2 {x}_{1}^{4} {x}_{2} {y}_{2}+2
       {y}_{1}^{4} {x}_{2} {y}_{2}+4 {x}_{1} {y}_{1}^{3} {y}_{2}^{2}$
\vspace{0.25cm}
\end{tabular}

\caption{Generators of $\CC[V]^{H}$ that are eigenvectors of $\Ab(G)$ ($a$~is a 12th root of unity).}
\label{table:generatorsG4}
\end{table}

This allows us to compute a set of generators of~$\bC[V]^{[G,G]}$ consisting of eigenvectors of the $\Ab(G)$ action. The method of the proof is the same as in the case of~$S_{3}$, cf. Proposition~\ref{proposition:commutator-invariantsS3}.

\begin{proposition}
The elements $\phi_{i}$ in the Table~\ref{table:generatorsG4} are generators of the ring of invariants $\Pcal = \CC[V]^{H}$, which are eigenvectors of the action of~$\tau$.
\end{proposition}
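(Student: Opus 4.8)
The argument follows the scheme of the proof of Proposition~\ref{proposition:commutator-invariantsS3}. The first task is to produce a finite set of homogeneous generators of $\Pcal = \CC[V]^{H}$, where $H = [G,G]\cong Q_8$ is the group generated by $\iota$ and $\xi$. This is a classical finite-group invariant-ring computation, carried out in Singular~\cite{Singular} (or Macaulay2): one may use the Reynolds operator together with Noether's degree bound $|H| = 8$, or one of the standard algorithms for invariant rings of finite groups. Since $H$ acts symplectically, in particular through $\SL(V)$, the ring $\Pcal$ is Gorenstein, which gives a convenient consistency check on the Hilbert series of the output; one finds that generators occur in degrees $\le 6$.

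The second task is to adapt this generating set to the residual action of $\Ab(G) = G/H \cong \ZZ_{3}$. Since $H\triangleleft G$, the whole group $G$ --- and hence the cyclic group $\langle\bar\tau\rangle$ --- acts on $\Pcal$, respecting the grading. Choosing the homogeneous generators so that in each degree $d\le 6$ their $\CC$-linear span $S_d\subseteq\Pcal_d$ is $\langle\bar\tau\rangle$-stable (possible over $\CC$ by complete reducibility), I would then decompose each $S_d$ into eigenspaces $S_d = S_d^{(1)}\oplus S_d^{(\epsilon)}\oplus S_d^{(\epsilon^2)}$ for the eigenvalues $1,\epsilon,\epsilon^2$. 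Picking a basis of each eigenspace and collecting over all $d$ yields exactly the list $\phi_1,\ldots,\phi_{18}$ of Table~\ref{table:generatorsG4}, grouped by eigenvalue; the coefficients naturally lie in $\QQ(a)$ with $a$ a primitive $12$th root of unity, since $\tau$ itself is only defined over such a field.

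The verification is then elementary. For each $\phi_i$ one checks directly that $\iota(\phi_i)=\phi_i$ and $\xi(\phi_i)=\phi_i$, so that $\phi_i\in\Pcal$ (it suffices to test the generators $\iota,\xi$ of $H$), and that $\tau(\phi_i)$ equals $\phi_i$, $\epsilon\phi_i$ or $\epsilon^{2}\phi_i$ according to the table. That the $\phi_i$ generate $\Pcal$ is immediate, because in each degree they span the same subspace $S_d$ as the original generators: passing to a basis of eigenvectors is merely a change of $\CC$-basis inside $S_d$.

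The only genuine difficulty lies in the first step: unlike the diagonal commutator subgroup in the $S_3$ case, $H\cong Q_8$ is non-abelian, so completeness of the generating set really rests on a Gröbner-basis computation that one must trust (or certify via the Hilbert series). A structural remark makes the outcome transparent, though: inspecting $\iota$ and $\xi$ shows that, with respect to the splitting $V = \CC^{2}_{x_1,y_1}\oplus\CC^{2}_{x_2,y_2}$, the group $H$ acts by the same pair of $2\times 2$ matrices on both summands, i.e. $V\cong W\otimes\CC^{2}$ with $W$ the unique $2$-dimensional irreducible representation of $Q_8$ and $\CC^{2}$ a trivial factor. Hence $\Pcal$ is the ring of diagonal invariants of $Q_8\subset\SL_2(\CC)$ acting on $\CC^{2}\oplus\CC^{2}$, closely related to the $D_4$ du Val singularity $\CC^{2}/Q_8$; this description can be used to predict the graded dimensions of the space of generators and thereby confirm that Table~\ref{table:generatorsG4} is both complete and minimal.
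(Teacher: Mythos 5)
Your proposal is correct and follows essentially the same route as the paper: the authors likewise compute a generating set of $\CC[V]^{[G,G]}$ by computer algebra (Singular) and then split the linear span of the generators into $\Ab(G)$-stable pieces on which the $\ZZ_3$-action is diagonalized, exactly as in their Proposition for $S_3$. Your additional remarks (the Gorenstein/Hilbert-series check and the identification of $V|_H$ with $W\otimes\CC^2$ for the $2$-dimensional irreducible representation $W$ of $Q_8$) are sound but go beyond what the paper records.
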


As in the above cases, we expect that the generating set of the Cox ring of a symplectic resolution can be described as in Theorem~\ref{theorem:general-theorem}.

\begin{conjecture}
The image of the Cox ring $\Theta(\Rcal(X)) \subseteq \Pcal[t_{1}^{\pm 1}, t_{2}^{\pm 1}]$ is generated~by $$t_{1}^{-2}t_{2}, t_{1}t_{2}^{-2}, \phi_1,\ldots,\phi_8, \phi_9t_2,\ldots,\phi_{13}t_2, \phi_{14}t_1,\ldots,\phi_{18}t_1.$$
\end{conjecture}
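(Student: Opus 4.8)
The plan is to derive the conjecture from Theorem~\ref{theorem:general-theorem}. The representation-theoretic hypotheses are already recorded in the Proposition above: $[G,G]\cong Q_8$ contains no symplectic reflection, there are two conjugacy classes of symplectic reflections with representatives $T_1=\tau$, $T_2=\tau^2$ of order $3$, and the intersection matrix is the $A_2$ Cartan matrix. Since $|\Ab(G)|=3=|\det(E_i.C_j)|$, Remark~\ref{remark:class-group-sublattice} gives $\overline{\Theta}=\Theta$. Hence, once condition~\eqref{assumption:lifting-valuations} is verified for $G_4$, Theorem~\ref{theorem:general-theorem} produces a generating set of $\Theta(\cR(X))$, and all that remains is to check that this set is the one listed in the conjecture.

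First I would make the generating set of Theorem~\ref{theorem:general-theorem} explicit. Reading off the $A_2$ matrix, the two torus characters are $t_1^{E_1.C_1}t_2^{E_1.C_2}=t_1^{-2}t_2$ and $t_1^{E_2.C_1}t_2^{E_2.C_2}=t_1t_2^{-2}$. For the lifts $\overline{\phi}_j=\phi_j\prod_i t_i^{\overline{D}_j.C_i}$, formula~\eqref{eqn:cartan_matrix} gives $(\overline{D}_j.C_1,\overline{D}_j.C_2)=-\tfrac13\bigl(\nu_1(\phi_j),\nu_2(\phi_j)\bigr)\cdot\begin{pmatrix}-2&1\\1&-2\end{pmatrix}$, so it suffices to compute the two monomial valuations $\nu_1=\nu_\tau$ and $\nu_2=\nu_{\tau^2}$ on each of the eighteen explicit polynomials in Table~\ref{table:generatorsG4}. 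By Remark~\ref{remark:valuations-of-generators}, $\nu_i(\phi_j)$ is congruent mod $3$ to the exponent determined by the $\tau$-eigenvalue of $\phi_j$; the point of the computation is that in every case it attains the \emph{minimal} such residue, namely $(\nu_1,\nu_2)=(0,0)$ for $\phi_1,\dots,\phi_8$, $(1,2)$ for the $\epsilon$-eigenvectors $\phi_9,\dots,\phi_{13}$, and $(2,1)$ for the $\epsilon^2$-eigenvectors $\phi_{14},\dots,\phi_{18}$. Feeding these into the displayed formula yields $\overline{\phi}_j=\phi_j$ for $j\le 8$, $\overline{\phi}_j=\phi_jt_2$ for $9\le j\le 13$, and $\overline{\phi}_j=\phi_jt_1$ for $14\le j\le 18$ — exactly the list in the conjecture. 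This is a finite, mechanical check on the polynomials in the table.

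The substantive step is verifying condition~\eqref{assumption:lifting-valuations}, and I would follow the template of Sections~\ref{section_lifting_S3} and~\ref{section_D8_lifting}. Set $\kappa\colon\CC[w_1,\dots,w_{18}]\to\Pcal$, $w_j\mapsto\phi_j$, with kernel $\cI$, and let $\widetilde T_1,\widetilde T_2$ be the diagonal lifts of Definition~\ref{def-valuation-up}, with monomial valuations $\widetilde\nu_1,\widetilde\nu_2$. As in Lemma~\ref{lemma:valuations-vs-ideals-S3}, $\nu_i$ is the valuation of the ideal $\cK_i$ of the fixed subspace of $T_i$ in $\CC^4$, and $\widetilde\nu_i$ is the valuation of the monomial ideal $\cJ_i$ spanned by those $w_j$ with $\widetilde\nu_i(w_j)>0$. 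Since each class of symplectic reflections now has four conjugate elements, the analogue of Lemma~\ref{lemma:valuations-vs-ideals-S3-II}/Lemma~\ref{lemma_representatives_intersection} shows that a $\Cl(Y)$-homogeneous $f$ with $\nu_i(f)\ge d$ lies in $\bigl(\cK_i^{(1)}\bigr)^d\cap\cdots\cap\bigl(\cK_i^{(4)}\bigr)^d$, the intersection of the $d$-th powers of the ideals of all four conjugate fixed subspaces. One then needs: (i) a ``powers of the intersection equal intersection of the powers'' statement for these four linear ideals, in the spirit of Lemma~\ref{lemma:powers-and-intersections-S3}; (ii) that $\cK_i^{(1)}\cap\cdots\cap\cK_i^{(4)}$ is generated by $[G,G]$-invariants, so that averaging over $[G,G]=Q_8$ makes the descent to $\Pcal$ compatible with powers (cf.\ Lemmas~\ref{lemma:intersection-gens-S3} and~\ref{lemma:intersection-gens-Z2wr}); and (iii) the identity $\kappa^{-1}\bigl(\cK_i^{(1)}\cap\cdots\cap\cK_i^{(4)}\bigr)=\cJ_i+\cI$, to be checked in a computer algebra system. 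Combining (i)--(iii) as in Corollary~\ref{corollary:preimage-of-intersection-S3} and Lemma~\ref{lemma:lifting-valuations-separatedlyZ2wr} gives, for each $i$ separately, a lift $\widetilde f$ with $\nu_i(f)=\widetilde\nu_i(\widetilde f)$; a further argument (along the lines promised in~\cite{valuationslift}) would be required to lift compatibly with \emph{both} valuations simultaneously, which is the full strength of~\eqref{assumption:lifting-valuations}.

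The main obstacle is precisely this verification. The generators $\phi_1,\dots,\phi_{18}$ have degree up to $6$ with coefficients in $\QQ(a)$ for $a$ a primitive $12$th root of unity, $[G,G]=Q_8$ is non-abelian of order $8$, and each conjugacy class of symplectic reflections has four elements; consequently the ideal $\cI=\ker\kappa$, the pullbacks $\widetilde{\cK}_i=\kappa^{-1}(\cK_i)$, and the identity in (iii) live in a polynomial ring in $18$ variables and are far beyond the reach of the Gröbner-basis computations that settled the $S_3$ and $\ZZ_2\wr S_2$ cases. Moreover, the elementary but delicate ``powers of intersections'' lemma already required a two-layer induction for three conjugate ideals in $\CC^4$ (Lemma~\ref{lemma:powers-and-intersections-S3}), and its analogue for four conjugate ideals is not obviously true and would need its own combinatorial argument. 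This is why the statement is recorded here as a conjecture rather than proved.
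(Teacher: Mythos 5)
The statement you were given is recorded in the paper as a conjecture and carries no proof there: the paper arrives at the candidate generating set exactly as you do, by feeding the $A_2$ intersection matrix and the valuations of the $\phi_j$ into Theorem~\ref{theorem:general-theorem} via~\eqref{eqn:cartan_matrix}, and leaves condition~\eqref{assumption:lifting-valuations} unverified for $G_4$. Your proposal is therefore a faithful account of the paper's (implicit) reasoning rather than a proof --- your reduction to condition~\eqref{assumption:lifting-valuations}, your computation of the generator list, and your diagnosis of why the verification (the analogues of Lemmas~\ref{lemma:powers-and-intersections-S3}--\ref{lemma:intersection-of-preimages-S3} for four conjugate fixed subspaces and an $18$-variable kernel ideal) is out of reach all match why the paper states this as a conjecture.
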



\bibliography{sympl_cox_examples} \bibliographystyle{alpha}

\end{document}